\def\SL@eqnlefttext #1{\hbox to 0pt{\kern 60pt 
		\llap{\SL@margintext{#1}\quad}\hss}}
\DeclareMathOperator{\Be}{Be}
\DeclareMathOperator{\Kl}{Kl}
\newtheorem{theorem}{Theorem}[section]
\newtheorem{prop}[theorem]{Proposition}
\newtheorem{lemma}[theorem]{Lemma}
\newtheorem{coro}[theorem]{Corollary}
\theoremstyle{definition}
\newtheorem{rem}[theorem]{Remark}
\newtheorem{definition}[theorem]{Definition}
\newtheorem{con}[theorem]{Construction}
\newtheorem{nota}[theorem]{Notation}
\numberwithin{equation}{section}
\numberwithin{equation}{theorem}
\newcommand{\quash}[1]{}
\title{comparison between formal slopes and $p$-adic slopes}
\author{Yezheng Gao }
\date{}
\begin{document}
	\selectlanguage{english}
	
	\maketitle

	\textbf{Abstract:} In this paper, we establish several inequalities comparing formal slopes with $p$-adic slopes of solvable differential modules over the punctured open unit disc. Our approach is based on a delicate analysis of Newton polygons and the log-convexity of generic radius functions.

	\textbf{Keywords:} formal slopes; $p$-adic slopes; $p$-adic differential equations
	
	\section{Introduction}
	
	The study of differential modules over varieties is of particular importance. Local variants of these objects in the curve case are differential modules over formal punctured discs or rigid annuli. In this paper, we prove a comparison between certain ramification invariants associated with these local objects.
	
	On the one hand, let $k$ be a field of characteristic $0$ and $K=k((x))$. In the formal theory of differential modules over $K$ relative to $k$, a seminal result is the Turrittin-Levelt decomposition theorem (see \cite{Le75,Tu55}). This decomposition enables us to associate some ramification invariants called \textit{formal slopes} to a differential module over $K$.
	
	On the other hand, let $p$ be a prime and $k$ be a complete discrete valuation field of mixed characteristic $(0,p)$. The Robba ring $\mathcal{R}$ is the ring of analytic functions over $k$ on an open annulus of outer radius $1$ and unspecific inner radius. Christol, Dwork and Mebkhout systematically studied differential modules over $\mathcal{R}$ relative to $k$ in \cite{C-D94,C-M93,C-M97,C-M00,C-M01}. Our research focuses on solvable differential modules as defined in \cite{C-M00}, to which we can associate some ramification invariants called $p$\textit{-adic slopes}.
	
	We relate the theory of formal slopes to that of $p$-adic slopes by considering a subring $\mathcal{A}_x$ of $\mathcal{R}$, consisting of analytic functions on the punctured open unit disc over $k$. Given a solvable differential module $M$ over $\mathcal{A}_x$, since $\mathcal{A}_x$ can be naturally embedded into $k((x))$, both formal slopes and $p$-adic slopes of $M$ are well defined.
	
	It is natural to compare these two slopes. According to a result of Baldassarri in \cite{Ba82}, the maximum $p$-adic slope is less than or equal to the maximum formal slope. In this paper, we show the following inequalities. Note that the residue field of $k$ is not necessarily perfect in our setting.
	
	\begin{theorem}
		Let $M$ be a solvable differential module of rank $n$ over $\mathcal{A}_x$. Let
		\[\alpha_1\geq\cdots\geq\alpha_n\quad\mathrm{(resp.}\quad\beta_1\geq\cdots\geq\beta_n\mathrm{)}\]
		be the $p$-adic slopes (resp. formal slopes) of $M$ listed in the decreasing order. Then for each $1\leq i\leq n$, the following inequality holds.
		\begin{equation} 
			\sum_{j=1}^{i}\alpha_j\leq\sum_{j=1}^{i}\beta_j.
		\end{equation}
	\end{theorem}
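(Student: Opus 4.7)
The plan is to deduce the inequality for each $i$ from Baldassarri's bound (the $i = 1$ case) by passing to the $i$-th exterior power $\wedge^i M$. As a first step, $\wedge^i M$ is again a solvable differential module over $\mathcal{A}_x$, since solvability is preserved under tensor operations: the generic radius of a tensor product is bounded below by the minimum of those of the factors, so convergence to $1$ propagates from $M$ to $\wedge^i M$. The main technical claim I would establish is the identity
\[
\alpha_1(\wedge^i M) = \sum_{j=1}^i \alpha_j, \qquad \beta_1(\wedge^i M) = \sum_{j=1}^i \beta_j,
\]
that is, the top $p$-adic and formal slopes of $\wedge^i M$ are given by the partial sums of the top $i$ slopes of $M$. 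Granting these, Baldassarri's inequality applied to $\wedge^i M$ immediately yields
\[
\sum_{j=1}^i \alpha_j \;=\; \alpha_1(\wedge^i M) \;\leq\; \beta_1(\wedge^i M) \;=\; \sum_{j=1}^i \beta_j.
\]

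On the formal side, this additivity follows cleanly from the Turrittin--Levelt decomposition: after a suitable ramified base change $M$ splits as a direct sum of rank-one pieces $L_1, \ldots, L_n$ whose slopes (rescaled by the ramification index) are the $\beta_j$, and the natural decomposition $\wedge^i(\bigoplus L_j) = \bigoplus_{|S| = i} \bigotimes_{j \in S} L_j$ shows that the formal slopes of $\wedge^i M$ are the sums $\sum_{j \in S} \beta_j$ over size-$i$ subsets $S \subset \{1, \ldots, n\}$. The maximum is attained at $S = \{1, \ldots, i\}$, giving $\beta_1 + \cdots + \beta_i$ as claimed.

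The main obstacle is the analogous additivity on the $p$-adic side, which is exactly where the Newton polygon analysis and the log-convexity of the generic radius function announced in the abstract must enter. Since the residue field of $k$ is not assumed perfect, I cannot simply invoke a direct decomposition of $M$ into pure-slope components; instead, I would express the top $p$-adic slope of $\wedge^i M$ in terms of the asymptotic behaviour of $-\log R(\wedge^i M, \rho)$ as $\rho \to 1^-$, and combine the log-convexity of $-\log R(M, \rho)$ with the standard behaviour of generic radii under exterior products (in particular the fact that distinct radii combine tropically) to control this asymptotic slope. A careful Newton polygon argument, tracking how the slopes of $M$ combine as one takes wedges, should then identify this quantity with $\sum_{j=1}^i \alpha_j$. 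Feeding this into Baldassarri's inequality for $\wedge^i M$ completes the proof.
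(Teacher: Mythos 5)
Your reduction to Baldassarri's rank-one inequality via exterior powers fails at its central claim. The identity $\beta_1(\wedge^i M)=\sum_{j=1}^i\beta_j$ (and likewise $\alpha_1(\wedge^i M)=\sum_{j=1}^i\alpha_j$) is false: formal slopes and $p$-adic slopes behave \emph{tropically} under tensor operations, not additively. In your own decomposition $\wedge^i\bigl(\bigoplus_j L_j\bigr)=\bigoplus_{|S|=i}\bigotimes_{j\in S}L_j$, the tensor product of rank-one pieces $\exp(a_1)\otimes\exp(a_2)$ is $\exp(a_1+a_2)$, whose slope is governed by the pole order of $a_1+a_2$; this is at most $\max(\beta_1,\beta_2)$ (with possible cancellation), never $\beta_1+\beta_2$. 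The same holds $p$-adically: as you yourself note, $R(M\otimes N)\geq\min\{R(M),R(N)\}$, which translates into $\alpha(M\otimes N)\leq\max\{\alpha(M),\alpha(N)\}$, not a sum. A concrete counterexample is the rank $n$ Bessel module of Section 6: all its $p$-adic and formal slopes equal $\frac{1}{n}$, so your identity would predict $\alpha_1(\wedge^nM)=\beta_1(\wedge^nM)=1$, yet $\wedge^nM=\det M$ is trivial up to a regular twist and has slope $0$. So the quantity $\alpha_1(\wedge^iM)$ simply is not the partial sum you need, and the reduction to the $i=1$ case collapses.

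The object that does carry the partial sums $\sum_{j=1}^i\alpha_j$ and $\sum_{j=1}^i\beta_j$ is not the top slope of $\wedge^iM$ but the sum of the $i$ smallest logarithmic subsidiary radii, i.e.\ the function $F_i(M,r)=\sum_{k=1}^if_k(M,r)$ of Definition 4.10 (the Newton polygon of the module itself, not of its exterior power; one only has the one-sided inequality $R_1(\wedge^iM,\rho)\geq\prod_{k=1}^iR_k(M,\rho)$, which is what underlies convexity, and equality fails in general). This is the route the paper takes: $F_i(M,r)$ is continuous, piecewise affine and convex on $(0,\infty)$; its slope near $r=0$ is $i+\sum_{j=1}^i\alpha_j$ by solvability, and a Newton-polygon analysis at small $\rho$ (Lemmas 5.7--5.9, Proposition 5.6, Lemmas 5.12 and 5.14) identifies its slope for large $r$ with $i+\sum_{j=1}^i\beta_j$ at the breaks of the polygon, after which convexity and a case analysis give the inequality for all $i$. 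Your instinct to exploit log-convexity and Newton polygons is the right one, but it must be applied to $F_i(M,\cdot)$ directly rather than to $f_1(\wedge^iM,\cdot)$.
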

	
	\begin{rem}
		The partial-sum inequality (1.1.1) can also be deduced from the theory of convergence Newton polygons developed by Poineau and Pulita \cite{Pu15,PP15} (see Proposition 3.4.1 of \cite{PP24}). The purpose of this paper is to give a direct proof for solvable differential modules over the punctured open disc that avoids Berkovich geometry and makes the relation with formal Newton polygons explicit. Our main technical input is a small-radius analysis of Newton polygons attached to a cyclic presentation over certain annulus, from which we derive explicit formulas for computing subsidiary generic radii (see Lemma 5.12 and 5.14).
	\end{rem}
	
	\begin{rem}
		The inequality (1.1.1) might be strict in general. For example, given an element $a(x)\in\mathcal{A}_x$, let $\mathrm{exp}\Big(a(x)\Big)$ denote the rank one differential module associated to the differential equation
		\[\frac{d}{dx}-a'(x)=0,\]
		where $a'(x)$ denotes the derivative of $a(x)$.
		
		Suppose that $k$ contains an element $\pi$ such that $\pi^{p-1}=-p$, then the exponential module \[M=\mathrm{exp}(\frac{\pi}{x^{p^n}}),\quad n\in\mathbb{N}\]
		is solvable. The formal slope (resp. $p$-adic slope) of $M$ is $p^n$ (resp. $1$).
	\end{rem}
	
	\begin{rem}
		Given a solvable differential module $M$ over $\mathcal{R}$, there exists some differential module $M'$ over $\mathcal{A}_x$ such that $M\simeq M'\otimes_{\mathcal{A}_x}\mathcal{R}$ (\cite{C-M01}, Theorem 4.2-7). We call $M'$ a model of $M$ over $\mathcal{A}_x$. For different models $M'$, the formal slopes of $M'$ may vary, but the $p$-adic slopes are all equal to those of $M$.
		
		Christol and Mebkhout asked whether there exists a model $M'$ such that
		\begin{equation}
			\sum_{j=1}^{n}\alpha_j=\sum_{j=1}^{n}\beta_j,
		\end{equation}
		where $\alpha_1,\cdots,\alpha_n$ (resp. $\beta_1,\cdots,\beta_n$) are the $p$-adic slopes (resp. formal slopes) of $M'$ and $n$ is the rank of $M$ (\cite{C-M01}, Problem 5.0-14).
		
		The rank one case is already proved by classifying solvable differential modules in \cite{Pu04}.
		
		For instance, let $M$ be the exponential module $\mathrm{exp}(\frac{\pi}{x^p})$ over $\mathcal{R}$. Then $M'=\mathrm{exp}(\frac{\pi}{x})$ is the desired model over $\mathcal{A}_x$, whose formal slopes and $p$-adic slopes are both equal to $1$. The overconvergent Dwork exponential series (see \cite{Rob00}, Chapter 7.2.4) induces an isomorphism $M\simeq M'\otimes_{\mathcal{A}_x}\mathcal{R}$.
		
		However, even for a model satisfying (1.4.1), the inequality (1.1.1) might still be strict when $i<n$. In Section 6.4, we present an example of this phenomenon.
	\end{rem}
	
	The proof of Theorem 1.1 is inspired by the work of Baldassarri in \cite{Ba82}. The information of formal slopes and $p$-adic slopes is encoded in certain functions $F_i(M,r)$, which are defined by generic radii in Section 4. These functions are piecewise affine, continuous and convex on the interval $(0,\infty)$ (see Figure 1). On the one hand, the slope of $F_i(M,r)$ is $i+\sum_{j=1}^{i}\alpha_j$ as $r\rightarrow0$. On the other hand, we show that the slope of $F_i(M,r)$ is $i+\sum_{j=1}^{i}\beta_j$ as $r\rightarrow\infty$ in Section 5 through a delicate analysis of Newton polygons. Finally, we conclude the theorem by the convexity of $F_i(M,r)$.
	
	In Section 2, we review some general facts about differential modules. In Section 3, we revise an algorithm for computing formal slopes (\cite{Ka87}, (2.2.10)). We then rewrite the algorithm using the language of formal Newton polygons. In Section 4, we review the notions of generic radii, Newton polygons, Robba rings, solvable modules and the $p$-adic slopes. This section is mostly based on Chapter 9\textasciitilde12 of Kedlaya's work \cite{Ked22}. In Section 5, we give a detailed proof of Theorem 1.1 as sketched above. In Section 6, we present some examples from Bessel equations and study the associated monodromy representations. In particular, the phenomenon at the end of Remark 1.4 is explained using an example.
	
	\begin{center}
		\begin{figure}[htbp]
			\begin{tikzpicture}
				\draw[->] (0,0) -- (6,0) node[right] {$r$};
				\draw[->] (0,0) -- (0,3) node[above] {$F_i(M,r)$};
				\node[above left] at (0,0) {$0$};
				\filldraw[black] (0,0) circle (1pt);
				\filldraw[black] (2.5,0.5) circle (1pt);
				\filldraw[black] (4,1.5) circle (1pt);
				\draw[black] (0,0)--(2.5,0.5)--(4,1.5)--(5,2.8);
			\end{tikzpicture}
			\caption{}
		\end{figure}
	\end{center}
	
	\subsection*{Acknowledgements}
	This article is a part of my thesis prepared at Academy of Mathematics and Systems Science. I would like to express my great gratitude to my thesis advisor Daxin Xu for leading
	me to this question and for his helpful comments on earlier versions of this work. I would also like to thank Andrea Pulita for his discussions and encouragement.
	
    The author is supported by National Key R\&D Program of China (2025YFA1018000) and National Natural Science Foundation of China (12222118).
	
	\section{General facts about differential modules}
	
	\subsection{Basic definitions}
	
	\begin{definition}
		(1) Let $R$ be a commutative ring with $1$. A \textit{derivation} over $R$ is an additive map $d:R\rightarrow R$ satisfying the Leibniz rule
		\[d(ab)=ad(b)+bd(a),\quad\forall~a,b\in R.\]
		A \textit{differential ring} is a ring $R$ equipped with a derivation $d$, denoted by $(R,d)$ or simply $R$. If $R$ is a field in addition, we call it a \textit{differential field}.
		
		(2) A \textit{differential module} over a differential ring $(R,d)$ is an $R$-module $M$ equipped with an additive map $D:M\rightarrow M$ that satisfies the following property. 
		\[D(am)=aD(m)+d(a)m,\quad\forall~a\in R,m\in M.\]
		We denote the differential module by $(M,D)$ or simply $M$, and $D$ is called a \textit{differential operator} on $M$ relative to $d$.
		
		A \textit{homomorphism} between differential modules over $R$ is an $R$-linear map that is compatible with the differential operators.
	
		(3) Let $R$ be a differential ring and $(M_1,D_1),(M_2,D_2)$ be differential modules over $R$. Define a differential operator $D$ on $M_1\otimes_FM_2$ by
		\[D(a\otimes b)=D_1(a)\otimes b+a\otimes D_2(b),\quad\forall~a\in M_1,b\in M_2.\]
		$(M_1\otimes_FM_2,D)$ is called the \textit{tensor product} of $M_1$ and $M_2$.
	\end{definition}
	
	\subsection{Cyclic basis and twisted polynomials}
	
	\begin{definition}
		Let $(M,D)$ be a free differential module of rank $n$ over $R$. A \textit{cyclic vector} of $M$ is an element $m\in M$ such that $m,D(m),\cdots,D^{n-1}(m)$ form an $R$-basis of $M$. A \textit{cyclic basis} of $M$ is a basis of this form.
	\end{definition}
	
	\begin{theorem}
		$\mathrm{(}$\cite{D-G-S94}$\mathrm{,~Theorem~III.4.2)}$ Let $(F,d)$ be a differential field of characteristic $0$. If $d\neq0$, then every finite-dimensional differential module $M$ over $F$ has a cyclic basis.
	\end{theorem}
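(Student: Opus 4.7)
The plan is to adapt the Deligne--Katz cyclic vector construction as used in \cite{D-G-S94}. Since $d \neq 0$, pick an element $x \in F$ with $d(x) \neq 0$. Fix an arbitrary $F$-basis $(e_1,\dots,e_n)$ of $M$ and, for a parameter $t \in F$, set
\[
v(t) \;=\; \sum_{i=1}^{n} \frac{(x-t)^{i-1}}{(i-1)!}\, e_i \;\in\; M.
\]
I aim to show that $v(t_0)$ is cyclic for a generic choice of $t_0 \in F$. Expanding $D^{k}v(t)$ for $0 \leq k \leq n-1$ via the Leibniz rule and writing the result as $\sum_{i} a_{k,i}(t) e_i$ with $a_{k,i}(t) \in F[t]$ produces an $n \times n$ matrix $A(t)$ whose rows are the coordinate vectors of $D^{k}v(t)$; the condition that $v(t_0)$ be cyclic is exactly $\det A(t_0) \neq 0$.

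The core step is to show that $\det A(t) \in F[t]$ is a nonzero polynomial. Filter each entry by its degree in $(x-t)$. The contributions in which every application of $D$ differentiates only the polynomial factors, never the $e_i$'s, assemble after reindexing into a triangular matrix whose diagonal entries are (up to sign) a nonzero power of $d(x)$. Any other contribution involves at least one application of $D$ to some $e_i$, which strictly lowers the degree in $(x-t)$ in the corresponding entry. Consequently, the leading term of $\det A(t)$ is a nonzero monomial in $d(x)$, which is nonzero by the choice of $x$.

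Finally, since $F$ has characteristic $0$, it is infinite, so there exists $t_0 \in F$ with $\det A(t_0) \neq 0$. Then $v(t_0), Dv(t_0), \dots, D^{n-1}v(t_0)$ is an $F$-basis of $M$, giving the desired cyclic basis. The main obstacle will be the leading-term bookkeeping: one has to set up the degree filtration carefully so that contributions involving $D(e_i)$'s are genuinely of lower order and the polynomial leading term does not cancel in the expansion of the determinant. This is precisely where the hypothesis $d(x) \neq 0$ enters.
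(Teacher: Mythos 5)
The paper does not prove this statement at all; it is quoted from \cite{D-G-S94}, Theorem III.4.2. Your proposal follows the right general strategy (the Deligne--Katz ``generic cyclic vector'' method), but the candidate vector is too naive and the argument breaks down. Take $n=2$ and the module with $D(e_1)=-d(x)\,e_2$, $D(e_2)=0$ (over, say, $F=\mathbb{C}(x)$ with $d=d/dx$). Then $v(t)=e_1+(x-t)e_2$ and, extending $d$ to $F[t]$ by $d(t)=0$, one finds
\[
Dv(t)=D(e_1)+d(x)\,e_2+(x-t)D(e_2)=-d(x)\,e_2+d(x)\,e_2+0=0
\]
identically, so $\det A(t)\equiv 0$ even though $e_1$ itself is a cyclic vector of this module. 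Hence no choice of $t_0$ can work for this basis, and no leading-term bookkeeping can rescue the construction as written.

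The underlying error is in the filtration step: applying $D$ to $e_i$ does \emph{not} lower the degree in $(x-t)$ --- it leaves the polynomial prefactor untouched --- whereas applying $d$ to the prefactor is exactly what lowers that degree. So the ``pure polynomial differentiation'' contributions are the \emph{lowest}-degree terms in $t$, not the leading ones; the top-degree part of the $k$-th row of $A(t)$ is $\tfrac{(x-t)^{n-1}}{(n-1)!}$ times the coordinate vector of $D^k(e_n)$, and the determinant of those leading parts is nonzero precisely when $e_n$ is already cyclic, which one cannot assume. The standard fix is to replace your $v(t)$ by Katz's vector
\[
v(t)=\sum_{j=0}^{n-1}\frac{(t-x)^j}{j!}\sum_{i=0}^{j}(-1)^i\binom{j}{i}D^{j-i}(e_{i+1}),
\]
whose inner alternating sum of iterated derivatives is exactly what produces the triangular leading behaviour you are after. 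Two smaller points: you must extend $d$ to $F[t]$ by $d(t)=0$ and then specialize $t$ only at \emph{constants} (e.g.\ $t_0\in\mathbb{Q}\subseteq F$, which suffices since a nonzero polynomial has finitely many roots); specializing at a non-constant $t_0\in F$ does not commute with $D$. Also, $d$ applied repeatedly to $(x-t)^{j}$ produces terms involving $d^2(x), d^3(x),\dots$, so even in the corrected construction the identification of the leading coefficient as a power of $d(x)$ requires a genuine induction rather than a one-line observation.
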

	
	\begin{definition}
		(Ore) Let $(R,d)$ be a differential ring. The \textit{ring of twisted polynomials} $R\{T\}$ over $R$ is defined as the free $R$-module of formal summations
		\[\Big\{\sum_{i=0}^{n}a_iT^i:a_i\in R,n\in \mathbb{N}\Big\},\]
		equipped with the noncommutative multiplication induced by
		\[T\cdot a=aT+d(a),\quad\forall~a\in R.\]
	\end{definition}
	
	\begin{rem}
		Let $(M,D)$ be a free differential module of rank $n$ over $R$. If $M$ has a cyclic vector $m$, then there exist $a_0,\cdots,a_{n-1}\in R$ such that
		\[D^n(m)+\sum_{i=0}^{n-1}a_iD^i(m)=0,\quad D^0=\mathrm{id}_M.\]
		
		Let $\ell=T^n+\sum_{i=0}^{n-1}a_iT^i\in R\{T\}$, we have an isomorphism of differential modules $M\simeq R\{T\}/R\{T\}\ell$, where $D^i(m)$ is sent to $T^i$, $i=0,1\cdots,n-1$.
	\end{rem}
	
	\section{Formal theory}
	
	In this chapter, let $k$ be a field of characteristic $0$ and $K=k((x))$ be the field of Laurent formal power series over $k$. Equip $K$ with the derivation $\frac{d}{dx}$ relative to $k$.
	
	\subsection{An algorithm for computing formal slopes}

	\begin{definition}
		The $x$-\textit{adic valuation}, denoted by $\mathrm{ord}_x$, is a discrete valuation on $K$ defined as follows.
		\[\mathrm{ord}_x:K\rightarrow\mathbb{Z}\cup\{\infty\},\quad\mathrm{ord}_x\Big(\sum_{i=n}^{\infty}a_ix^i\Big)=n,\quad a_i\in k,a_n\neq0.\]
	\end{definition}
	
	Let $M$ be a finite-dimensional differential module over $(K,\frac{d}{dx})$. The formal slopes of $M$ can be defined as a finite sequence of non-negative rational numbers (see \cite{Ka87}, Section 2.2). Rather than providing the exact definition, we present an algorithm for computing formal slopes, as this is sufficient for our proof.
	
	Since $M$ has a cyclic basis according to Theorem 2.3, we can assume that $M\simeq K\{T\}/K\{T\}\ell$, where $\ell$ is a twisted polynomial as described in Remark 2.5.
	
	\begin{prop}
		$\mathrm{(}$\cite{Ka87}$\mathrm{,~(2.2.10))}$ Let $\ell=\sum_{i=0}^{n}a_iT^i$ be a non-constant, monic twisted polynomial in $K\{T\}$, and let $f_\ell(t)=\sum_{i=0}^{n}a_it^i$ be the corresponding polynomial in $K[t]$. Fix an algebraic closure $\overline{K}$ of $K$ and factor $f_\ell(t)$ in $\overline{K}[t]$ as
		\[f_\ell(t)=\prod_{i=1}^{n}(t-\mu_i).\]
		Then the multi-set of formal slopes of $K\{T\}/K\{T\}\ell$ is
		\[\{\max\{0,-1-\mathrm{ord}_x(\mu_i)\}:i=1,\cdots,n\},\]
		where $\mathrm{ord}_x$ is the extension of the $x$-adic valuation of $K$ to $\overline{K}$.
	\end{prop}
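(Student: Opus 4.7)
The plan is to reduce to the case of rank-one differential modules by factoring $\ell$ into twisted linear factors over a sufficiently ramified extension, and then to compare the valuations coming from this twisted factorization with those of the commutative factorization of $f_\ell(t)$ via a Newton polygon argument.

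First, by a Hensel-type factorization theorem for twisted polynomials over complete differential fields (the twisted analog of the usual Hensel lemma, which underlies the Turrittin--Levelt decomposition), there exists a finite ramified extension $K' = k'((x^{1/N}))/K$ in which $\ell$ splits in $K'\{T\}$ as a product of twisted linear factors $(T-\nu_1)\cdots(T-\nu_n)$. This factorization exhibits $M\otimes_K K'$ as an iterated extension of the rank-one differential modules $K'\{T\}/K'\{T\}(T-\nu_i)$. Since formal slopes are additive in short exact sequences and invariant under such ramified base change (once $\mathrm{ord}_x$ is extended to a $\tfrac{1}{N}\mathbb{Z}$-valued valuation on $K'$), the formal slope of each rank-one piece equals $\max\{0,-1-\mathrm{ord}_x(\nu_i)\}$ by a direct computation on the scalar equation $D(m)=\nu_i m$: the shift by $-1$ reflects that $d/dx$ lowers the $x$-adic order by one, and the truncation handles the regular-singular case $\mathrm{ord}_x(\nu_i)\geq -1$.

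The core task is then to match the multi-set $\{\mathrm{ord}_x(\nu_i)\}$ arising from the twisted factorization with the multi-set $\{\mathrm{ord}_x(\mu_i)\}$ arising from the commutative factorization, at least in the range of valuations less than $-1$. My approach is via Newton polygons: define the Newton polygon of a twisted polynomial $\sum a_i T^i \in K'\{T\}$ as the lower convex hull of the points $(i,\mathrm{ord}_x(a_i))$, exactly as in the commutative setting. The key estimate is $\mathrm{ord}_x(d(a))\geq \mathrm{ord}_x(a)-1$: when one expands a twisted product $(T-\nu_1)\cdots(T-\nu_n)$ and normalizes with all $T$'s on the left, every noncommutative correction term has $x$-adic valuation strictly larger than the corresponding commutative contribution at any vertex of the Newton polygon whose slope has absolute value greater than $1$. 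Consequently, the Newton polygon of $\ell$ (as a twisted polynomial) and that of $f_\ell(t)$ (as a commutative polynomial) coincide on this portion, so by standard Newton polygon theory the sub-multisets $\{\mathrm{ord}_x(\nu_i) < -1\}$ and $\{\mathrm{ord}_x(\mu_i) < -1\}$ agree. Values $\geq -1$ in either multi-set all contribute formal slope $0$ after applying $\max\{0,\cdot\}$ and need not be matched individually.

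The principal obstacle I anticipate is the boundary slope $-1$, where a noncommutative correction term could in principle shift a valuation across the threshold and thereby alter whether a given root contributes formal slope $0$ or a positive slope. Handling this cleanly requires a careful analysis of the horizontal edge of both Newton polygons near $\mathrm{ord}_x=-1$; fortunately, the $\max\{0,\cdot\}$ truncation absorbs any such discrepancy, so the resulting multi-set of formal slopes still matches the formula in the statement.
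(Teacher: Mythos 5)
First, note that the paper does not prove this proposition: it is quoted from Katz \cite{Ka87}, (2.2.10) and used as a black box (its only role in the paper is as input to Lemma 3.4), so there is no internal proof to compare against. Your reconstruction follows what is essentially the standard argument: the rank-one computation giving slope $\max\{0,-1-\mathrm{ord}_x(\nu)\}$ for $K'\{T\}/K'\{T\}(T-\nu)$ is correct, formal slopes are indeed the union over a twisted linear factorization obtained after a tame base change $x=u^N$ and extension of constants (with $\mathrm{ord}_x$ renormalized so that $\mathrm{ord}_x(u)=1/N$), and the comparison of $\{\mathrm{ord}_x(\nu_i)\}$ with $\{\mathrm{ord}_x(\mu_i)\}$ in the range $<-1$ via the Newton polygon of the common coefficients $a_i$, driven by the estimate $\mathrm{ord}_x(\frac{d}{dx}a)\geq\mathrm{ord}_x(a)-1$, is the right mechanism.

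The one genuine gap is that the step you yourself identify as the principal obstacle is then resolved by assertion rather than by proof: ``the $\max\{0,\cdot\}$ truncation absorbs any such discrepancy'' is true, but it is precisely the content of the argument, and one direction of it is not covered by your vertex estimate. Your estimate shows that at a vertex all of whose left-hand slopes are $<-1$ the noncommutative corrections are strictly negligible, so $\mathrm{NP}(\ell)$ passes through the steep vertices of the commutative polygon of $\prod_i(t-\nu_i)$. What is missing is the lower bound showing that the corrections cannot \emph{create} new slopes $<-1$ in $\mathrm{NP}(\ell)$ coming from factors $\nu_i$ with $\mathrm{ord}_x(\nu_i)\geq-1$ (which would produce a $\mu_j$ with $\mathrm{ord}_x(\mu_j)<-1$ and hence a spurious positive formal slope). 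Concretely: a correction term in the coefficient of $T^{n-j}$ involving $m$ applications of $d$ has valuation at least $V_{j-m}-m$, where $V_{j'}$ is the sum of the $j'$ smallest $\mathrm{ord}_x(\nu_i)$; that is, it lies on or above the slope-$(-1)$ ray issuing from the vertex $(j-m,V_{j-m})$. The lower envelope of the commutative polygon together with all such rays agrees with that polygon on its portion of slope $<-1$ and has no slopes $<-1$ afterwards, and this two-sided bound is what makes the multisets $\{\mathrm{ord}_x(\mu_i)<-1\}$ and $\{\mathrm{ord}_x(\nu_i)<-1\}$ coincide. A secondary point: a Hensel-type factorization for twisted polynomials only splits $\ell$ according to the distinct slopes of its Newton polygon; reaching a full factorization into twisted \emph{linear} factors requires the Turrittin--Levelt decomposition over $\overline{k}((x^{1/N}))$ together with the fact that the pure pieces admit filtrations with rank-one quotients. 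That citation is legitimate (the formal slopes are defined through that decomposition, so there is no circularity), but it should be stated as the actual input rather than attributed to a Hensel lemma.
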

	
	\subsection{Formal Newton polygons}
	
	\begin{definition}
		Let $\ell=\sum_{i=0}^{n}a_iT^i$ be a monic twisted polynomial. 
		
		(1) The \textit{formal Newton polygon} of $\ell$, denoted by $\mathrm{FNP}(\ell)$, is defined as the boundary of the lower convex hull of the following set in $\mathbb{R}^2$ (see Figure 2).
		\[\{(-i,\mathrm{ord}_x(a_i))\in\mathbb{R}^2:i=0,1,\cdots,n\}.\]
		
		(2) An integer $m$ is called a \textit{break} of $\mathrm{FNP}(\ell)$ if either $m$ belongs to the set $\{-n,0\}$, or $-n<m<0$ and the slope of $\mathrm{FNP}(\ell)$ on the interval $[m-1,m]$ is strictly smaller than that on the interval $[m,m+1]$.
		
		(3) Let $-n=m_0<m_1\cdots<m_k=0$ denote all the breaks of $\mathrm{FNP}(\ell)$, and let \[\lambda_i=\frac{\mathrm{ord}_x(a_{-m_i})-\mathrm{ord}_x(a_{-m_{i-1}})}{m_i-m_{i-1}}\]
		be the slope of $\mathrm{FNP}(\ell)$ on the interval $[m_{i-1},m_i]$ for $i=1,\cdots,k$. The \textit{multi-set of slopes} of $\mathrm{FNP}(\ell)$ is defined as
		\[\mathrm{FS}(\ell)=\{\lambda_i \ \mathrm{with} \ \mathrm{multiplicity} \ m_i-m_{i-1}:i=1,\cdots,n\}.\]
		
		(4) The \textit{multi-set of effective slopes} of $\mathrm{FNP}(\ell)$, denoted by $\mathrm{EFS}(\ell)$, is defined as the subset of $\mathrm{FS}(\ell)$ consisting of slopes $<-1$.
	\end{definition}
	
	\begin{center}
		\begin{figure}[htbp]
			\begin{tikzpicture}
				\draw[->] (-5,0) -- (0.5,0) node[right] {$x$};
				\draw[->] (0,-3) -- (0,0.5) node[above] {$y$};
				\node[below left] at (0,0) {$0$};
				\node[above] at (-4,0) {$-n$};
				\filldraw[black] (-4,0) circle (1pt); 
				\filldraw[black] (-3,-1.8) circle (1pt); 
				\filldraw[black] (-1,-2.8) circle (1pt); 
				\filldraw[black] (0,-1) circle (1pt); 
				\filldraw[black] (-3.5,-0.5) circle (1pt);
				\filldraw[black] (-2.5,0.3) circle (1pt);
				\filldraw[black] (-2,-1.9) circle (1pt);
				\filldraw[black] (-1.5,-1.5) circle (1pt);
				\filldraw[black] (-0.5,-1.9) circle (1pt);
				\draw[black] (-4,0) -- (-3,-1.8) -- (-1,-2.8) -- (0,-1) ;
			\end{tikzpicture}
			\caption{}
		\end{figure}
	\end{center}
	
	\begin{lemma}
		Let $\ell=\sum_{i=0}^{n}a_iT^i$ be a non-constant, monic twisted polynomial and let $m$ be the cardinality of $\mathrm{EFS}(\ell)$. Then the multi-set of formal slopes of $K\{T\}/K\{T\}\ell$ is
		\begin{equation}
			\{-\lambda-1:\lambda \ \mathrm{runs} \ \mathrm{through} \ \mathrm{EFS}(\ell)\}\cup\{0 \ \mathrm{with} \ \mathrm{multiplicity} \ n-m\}.
		\end{equation} 
	\end{lemma}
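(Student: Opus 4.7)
The plan is to reduce the lemma to Proposition 3.3 together with the standard Newton polygon/valuation dictionary for the roots of a polynomial, applied carefully to the sign convention used in Definition 3.4 for $\mathrm{FNP}(\ell)$.

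First I would factor $f_\ell(t)=\prod_{i=1}^{n}(t-\mu_i)$ in $\overline{K}[t]$ and recall the classical fact that for a monic polynomial over a complete (or, as here, algebraically closed) valued field, the multi-set $\{\mathrm{ord}_x(\mu_i)\}_{i=1}^{n}$ is read off from the Newton polygon: a segment of horizontal length $r$ and slope $s$ (in the usual convention, i.e.\ lower convex hull of the points $(i,\mathrm{ord}_x(a_i))$) corresponds to exactly $r$ roots of valuation $-s$. Since the definition of $\mathrm{FNP}(\ell)$ in 3.4 uses the points $(-i,\mathrm{ord}_x(a_i))$ instead, it is the reflection of the usual Newton polygon through the $y$-axis; this reflection negates the slope and keeps horizontal lengths the same. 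Consequently, a segment of $\mathrm{FNP}(\ell)$ with slope $\lambda$ and horizontal length $r$ corresponds to exactly $r$ roots $\mu_i$ with $\mathrm{ord}_x(\mu_i)=\lambda$. In other words, the multi-set $\{\mathrm{ord}_x(\mu_i):i=1,\ldots,n\}$ equals $\mathrm{FS}(\ell)$.

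Next I would apply Proposition 3.3, which identifies the multi-set of formal slopes of $K\{T\}/K\{T\}\ell$ with $\{\max\{0,-1-\mathrm{ord}_x(\mu_i)\}:i=1,\ldots,n\}$. Translating along the bijection above, this multi-set equals
\[
\bigl\{\max\{0,-1-\lambda\}:\lambda\in\mathrm{FS}(\ell)\bigr\}.
\]
I would then split $\mathrm{FS}(\ell)$ according to whether $\lambda<-1$ or $\lambda\geq-1$. For $\lambda<-1$, the contribution is $-1-\lambda=-\lambda-1>0$, and by definition the $\lambda<-1$ portion of $\mathrm{FS}(\ell)$ is exactly $\mathrm{EFS}(\ell)$, yielding the first summand $\{-\lambda-1:\lambda\in\mathrm{EFS}(\ell)\}$ of the claimed formula. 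For $\lambda\geq-1$ the contribution is $0$, and counting multiplicities this accounts for the remaining $n-m$ entries, producing the second summand $\{0\text{ with multiplicity }n-m\}$.

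The only delicate point is step one, the Newton polygon/valuation correspondence combined with the nonstandard sign convention of Definition 3.4; I would verify the sign by a brief direct argument using that $a_{n-k}$ is, up to sign, the $k$-th elementary symmetric function in the $\mu_i$, so that if the $\mu_i$ are ordered by $\mathrm{ord}_x$ then $\mathrm{ord}_x(a_{n-k})\geq\sum_{j=1}^{k}\mathrm{ord}_x(\mu_j)$ with equality whenever the $k$-th smallest valuation is strictly smaller than the $(k+1)$-st, which is precisely what identifies the breaks of $\mathrm{FNP}(\ell)$ with the jumps in the valuations of the $\mu_i$. Once this is in place, the rest is a straightforward bookkeeping of multiplicities.
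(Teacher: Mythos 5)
Your proposal is correct and follows essentially the same route as the paper: both arguments factor $f_\ell$, identify the multi-set $\{\mathrm{ord}_x(\mu_i)\}$ with $\mathrm{FS}(\ell)$ via the Newton polygon (the paper cites \cite{Ked22}, Proposition 2.1.5, where you sketch the elementary-symmetric-function verification directly), and then apply Proposition 3.2 and sort the slopes according to whether $\mathrm{ord}_x(\mu_i)<-1$. The sign bookkeeping for the reflected convention $(-i,\mathrm{ord}_x(a_i))$ and the multiplicity count are both handled correctly.
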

	
	\begin{proof}
		Let $f_\ell(t)=\sum_{i=0}^{n}a_it^i$ be the corresponding polynomial in $K[t]$. Fix an algebraic closure $\overline{K}$ of $K$ and factor $f_\ell(t)$ in $\overline{K}[t]$ as
		\[f_\ell(t)=\prod_{i=1}^{n}(t-\mu_i).\]
		According to \cite{Ked22} Proposition 2.1.5, the multi-set
		\[\{\mathrm{ord}_x(\mu_1),\cdots,\mathrm{ord}_x(\mu_n)\}\]
		is equal to $\mathrm{FS}(\ell)$.
		
		Combining this fact with Proposition 3.2, we complete the proof.
	\end{proof}
	
	\section{$P$-adic theory}
	
	In this section, let $p$ be a prime and $(k,|\cdot|)$ be a complete discrete valuation field of mixed characteristic $(0,p)$. Normalize the absolute value so that $|p|=p^{-1}$.
	
	\subsection{Generic radius and strong decomposition}
	
	\begin{definition}
		Let $\rho>0$. The $\rho$-\textit{Gauss norm} $|\cdot|_\rho$ on the polynomial ring $k[x]$ is defined by
		\[|\cdot|_\rho:k[x]\rightarrow\mathbb{R}_{\geq0},\quad\Big|\sum_{i=0}^{n}a_ix^i\Big|_\rho=\max_{0\leq i\leq n}\{|a_i|\rho^i\}.\]
		Let $F_\rho$ denote the completion of $k(x)$ with respect to $|\cdot|_\rho$.
	\end{definition}
	
	\begin{definition}
		Let $(M,D)$ be a finite-dimensional differential module over $(F_\rho,\frac{d}{dx})$ with a basis $e_1,\cdots,e_n$.
		
		(1) Equip $M$ with a norm $||\cdot||$ compatible with $|\cdot|_\rho$ as follows.
		\[||\cdot||:M\rightarrow\mathbb{R}_{\geq0},\quad||\sum_{i=1}^{n}a_ie_i||=\max_{1\leq i\leq n}\{|a_i|_\rho\},\quad a_i\in F_\rho.\]
		
		(2) The \textit{spectral radius} (resp. \textit{generic radius}) of $D$ is defined by
		\begin{equation}
			|D|_{\mathrm{sp},M}=\lim_{i\rightarrow\infty}|D^i|_M^\frac{1}{i}\quad\mathrm{(resp.}\quad R(M)=p^{-\frac{1}{p-1}}|D|_{\mathrm{sp},M}^{-1}\mathrm{)},
		\end{equation}
		where $|D^i|_M$ is the operator norm of $D^i$ with respect to $||\cdot||$.
	\end{definition}
	
	\begin{rem}
		The spectral radius $|D|_{\mathrm{sp},M}$ is independent of the choice of a basis of $M$. The existence of the limit in (4.2.1) is guaranteed by Fekete's lemma (\cite{Ked22}, Lemma 6.1.4).
	\end{rem}
	
	\begin{theorem}
		$\mathrm{(}$\cite{Ked22}$\mathrm{,~Theorem~10.6.2,~Strong~Decomposition)}$ Let $(M,D)$ be a finite-dimensional differential module over $(F_\rho,\frac{d}{dx})$. There exists a unique decomposition of differential modules
		\begin{equation}
			M\simeq\bigoplus\limits_{s\in (0,\rho]}M_s,
		\end{equation}
		such that
		
		(1) $M_s=0$ for all but finitely many $s\in(0,\rho]$.
		
		(2) If $M_s\neq0$ for some $s\in(0,\rho]$, then the generic radius of the restriction of $D$ to each submodule or quotient module of $M_s$ is equal to $s$. 
	\end{theorem}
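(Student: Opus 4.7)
The plan is to pass to a cyclic presentation and then factor the corresponding twisted polynomial according to the slopes of its Newton polygon via a Hensel-type argument in the noncommutative ring $F_\rho\{T\}$. By Theorem 2.3, I would choose a cyclic vector of $M$, reducing to the case $M\simeq F_\rho\{T\}/F_\rho\{T\}\ell$ for a monic twisted polynomial $\ell=T^n+\sum_{i<n}a_iT^i$, with $D$ acting as left multiplication by $T$. The spectral radius $|D|_{\mathrm{sp},M}$, and hence the generic radius, is then governed by a Newton polygon built from the $\rho$-Gauss norms $|a_i|_\rho$ of the coefficients, normalized by the intrinsic scale $p^{-1/(p-1)}\rho^{-1}$ of $d/dx$ on $F_\rho$, in analogy with the formal Newton polygon analysis of Section 3.

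Next I would prove a noncommutative analog of Hensel's lemma: if the Newton polygon of $\ell$ breaks into segments of distinct slopes $\lambda_1<\cdots<\lambda_k$, then $\ell$ admits a factorization $\ell=\ell_k\cdots\ell_1$ in $F_\rho\{T\}$ with each $\ell_j$ monic and of Newton polygon a single segment of slope $\lambda_j$. Starting from the classical factorization available in the commutative ring $F_\rho[T]$, I would set up an iterative correction scheme in the twisted ring, using the gap between successive slopes to ensure contraction and the completeness of $F_\rho$ to guarantee convergence. Each factor $\ell_j$ yields a subquotient $M_{s_j}$, with $s_j$ read off from $\lambda_j$; finiteness of the decomposition is automatic since the Newton polygon has only finitely many slopes.

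The more delicate point is the \emph{strong} property: that every submodule and every quotient of $M_s$ has generic radius exactly $s$, not merely $\geq s$. The lower bound is formal from norm comparisons on subs and quotients, while the matching upper bound requires that the spectral radius cannot drop on subquotients of a pure piece. I would establish this by analyzing the Fekete limit defining $|D|_{\mathrm{sp}}$ on the pure quotient $F_\rho\{T\}/F_\rho\{T\}\ell_j$ directly, showing that it is realized by the single slope $\lambda_j$ of that Newton polygon and is inherited by all subquotients, exploiting the log-convexity of $\rho\mapsto R(M_\rho)$ that underlies much of the paper.

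Finally, uniqueness of the decomposition follows because any homomorphism of differential modules between pieces of distinct generic radius must vanish: the internal $\FHom$ between two such pieces is itself pure of the smaller generic radius, and hence contains no horizontal sections. The principal obstacle is the Hensel-type factorization in the noncommutative setting, together with the purity-on-subquotients claim; both rely on norm estimates specific to the twisted polynomial ring $F_\rho\{T\}$ and have no clean commutative counterpart, the second being the essential strengthening over a mere spectral decomposition of the single module $M$.
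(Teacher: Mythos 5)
The paper does not prove this statement; it is quoted verbatim from Kedlaya's book (Theorem 10.6.2), so there is no internal proof to compare against. Judged on its own terms, your proposal has a genuine gap: the Newton-polygon factorization you build the argument on can only separate the \emph{visible} part of the spectrum. As the paper's own Proposition 4.7 records, the Newton polygon of a twisted polynomial over $F_\rho$ controls exactly the subsidiary radii that are $<\omega\rho$ with $\omega=p^{-1/(p-1)}$; any component whose spectral radius does not exceed the operator norm $|d/dx|_{F_\rho}=\rho^{-1}$ is invisible to it, i.e.\ all components with generic radius in $[\omega\rho,\rho]$ contribute the same (non-effective) slopes and cannot be distinguished or split apart by a Hensel-type factorization of $\ell$. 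Your scheme therefore yields only the ``visible decomposition'' (Kedlaya's Chapter 6 result), not the strong decomposition over the full range $s\in(0,\rho]$. The missing ingredient is Frobenius descent/pushforward along $x\mapsto x^p$, which rescales the radii so that formerly invisible components become visible after finitely many pushforwards; this is precisely why the theorem sits in Chapter 10 of Kedlaya's book rather than Chapter 6.

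Two smaller points. First, a factorization $\ell=\ell_k\cdots\ell_1$ gives a filtration of $M$ by differential submodules, not a direct sum; to split it you need vanishing of extensions (not just homomorphisms) between pieces of distinct generic radius, which is the same $\FHom$-purity argument you invoke for uniqueness but must be stated for $\mathrm{Ext}^1$, i.e.\ for $\rH^1$ of the internal Hom. Second, the ``purity on subquotients'' claim is also not free in the invisible range: for a piece with radius in $[\omega\rho,\rho]$ the Fekete limit is not read off from a single Newton-polygon slope, and again one needs Frobenius descent to control it. With those repairs (essentially reconstructing Kedlaya's Chapters 6 and 10), the outline would be correct, but as written it proves a strictly weaker statement.
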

	
	\begin{definition}
		Let $M$ be a finite-dimensional differential module over $(F_\rho,\frac{d}{dx})$ and let (4.4.1) be its strong decomposition. The \textit{multi-set of subsidiary radii} of $M$ is defined as
		\[\{s \ \mathrm{with} \ \mathrm{multiplicity} \ \mathrm{dim}_{F_\rho}M_s:s\in(0,\rho],M_s\neq0\}.\]
	\end{definition}
	
	\subsection{Newton polygons}
	
	Equip $F_\rho$ with a valuation
	\[v_\rho:F_\rho\rightarrow\mathbb{R}\cup\{\infty\},\quad v_\rho=-\mathrm{log}|\cdot|_\rho,\]
	where $\mathrm{log}$ denotes the natural logarithm.
	
	\begin{definition}
		Let $\ell=\sum_{i=0}^{n}a_iT^i$ be a monic twisted polynomial in $F_\rho\{T\}$. 
		
		(1) The \textit{Newton polygon} of $\ell$, denoted by $\mathrm{NP}_\rho(\ell)$, is defined as the boundary of the lower convex hull of the following set in $\mathbb{R}^2$ (see Figure 2).
		\[\{(-i,v_\rho(a_i))\in\mathbb{R}^2:i=0,1,\cdot,n\}.\]
		
		(2) An integer $m$ is called a \textit{break} of $\mathrm{NP}_\rho(\ell)$ if either $m$ belongs to the set $\{-n,0\}$, or $-n<m<0$ and the slope of $\mathrm{NP}_\rho(\ell)$ on the interval $[m-1,m]$ is strictly smaller than that on the interval $[m,m+1]$.
		
		(3) Let $-n=m_0<m_1\cdots<m_k=0$ denote all the breaks of $\mathrm{NP}_\rho(\ell)$, and let \[\lambda_i=\frac{v_\rho(a_{-m_i})-v_\rho(a_{-m_{i-1}})}{m_i-m_{i-1}}\]
		be the slope of $\mathrm{NP}_\rho(\ell)$ on the interval $[m_{i-1},m_i]$ for $i=1,\cdots,k$. The \textit{multi-set of slopes} of $\mathrm{NP}_\rho(\ell)$ is defined as
		\[\mathrm{S}_\rho(\ell)=\{\lambda_i \ \mathrm{with} \ \mathrm{multiplicity} \ m_i-m_{i-1}:i=1,\cdots,n\}.\]
		
		(4) The \textit{multi-set of effective slopes} of $\mathrm{NP}_\rho(\ell)$, denoted by $\mathrm{ES}_\rho(\ell)$, is defined as the subset of $\mathrm{S}_\rho(\ell)$ consisting of slopes $<-\mathrm{log}|\frac{d}{dx}|_{F_\rho}=\mathrm{log}\rho$.
	\end{definition}
	
	\begin{prop}
		$\mathrm{(}$\cite{Ked22}$\mathrm{,~Corollary~6.5.4)}$ Let $\ell$ be a non-constant, monic twisted polynomial in $F_\rho\{T\}$ and set $\omega=p^{-\frac{1}{p-1}}$. Then the multi-set of subsidiary radii of $F_\rho\{T\}/F_\rho\{T\}\ell$ consisting of radii $<\omega\rho$ is equal to the multi-set
		\[\{\omega\mathrm{exp}(\lambda):\lambda \ \mathrm{runs} \ \mathrm{through} \ \mathrm{ES}_\rho(\ell)\}.\]
	\end{prop}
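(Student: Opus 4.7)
The plan is to reduce the claim to a rank one calculation by factoring $\ell$ along its Newton polygon, then assemble via the strong decomposition of Theorem 4.4. The rank one case is as follows: for $\ell = T - \mu$, the quotient $F_\rho\{T\}/F_\rho\{T\}\ell$ is the one-dimensional module on which $D$ acts as $d/dx + \mu$; a direct computation using the binomial expansion $(d/dx + \mu)^j = \sum_k \binom{j}{k}\mu^{j-k}(d/dx)^k$ and the standard identity $|d/dx|_{\mathrm{sp}, F_\rho} = \omega\rho^{-1}$ gives $|D|_{\mathrm{sp}} = \max(|\mu|, \omega\rho^{-1})$. By (4.2.1) the generic radius is $R(M) = \omega\exp(\lambda)$ when $\lambda := -\log|\mu| < \log\rho$, and equals $\rho$ otherwise. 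In particular the effective case produces a subsidiary radius strictly less than $\omega\rho$, while the non-effective case produces one at least $\omega\rho$, matching the claim in rank one.

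For the general case, let $-n = m_0 < \cdots < m_k = 0$ be the breaks of $\mathrm{NP}_\rho(\ell)$ with slopes $\lambda_1 < \cdots < \lambda_k$. A non-commutative Hensel-type factorization lemma for twisted polynomials over the complete field $F_\rho$ allows one to write $\ell = \ell_1 \cdots \ell_k$ in $F_\rho\{T\}$ with each $\ell_i$ monic of degree $m_i - m_{i-1}$ and Newton polygon of pure slope $\lambda_i$. This factorization induces a filtration of $M = F_\rho\{T\}/F_\rho\{T\}\ell$ whose successive quotients are isomorphic to $M_i := F_\rho\{T\}/F_\rho\{T\}\ell_i$. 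For each $M_i$ I would show $|D|_{\mathrm{sp}, M_i} = \max(\exp(-\lambda_i), \omega\rho^{-1})$ by combining an upper bound from explicit norm estimates on the basis $1, T, \ldots, T^{\deg \ell_i - 1}$ (using the defining polynomial relation, whose coefficients are controlled by the pure-slope assumption) with a matching lower bound obtained by base-changing to a finite extension of $F_\rho$ that splits $\ell_i$ and invoking the rank one computation. By the uniqueness of the strong decomposition, the multi-set of subsidiary radii is additive across short exact sequences of differential modules, so the subsidiary radii of $M$ are the disjoint union of those of the $M_i$. Effective $\lambda_i$ each contribute $m_i - m_{i-1}$ copies of $\omega\exp(\lambda_i) < \omega\rho$, while non-effective $\lambda_i$ contribute radii $\geq \omega\rho$ that fall outside the claimed range. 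Restricting to radii $< \omega\rho$ yields exactly $\{\omega\exp(\lambda) : \lambda \in \mathrm{ES}_\rho(\ell)\}$.

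The main obstacle is the non-commutative Hensel factorization aligned with the Newton polygon: one must show that $\ell$ admits slope factors in the twisted polynomial ring $F_\rho\{T\}$ and that they multiply to give $\ell$. This is subtler than the commutative analogue because multiplication in $F_\rho\{T\}$ intertwines with the derivation, and one must maintain control of the Newton polygon of each factor during the iterative construction. A secondary technical point is the lower-bound half of the spectral-radius computation on $M_i$, where base change to a splitting field must be handled carefully so that Gauss norms extend consistently and subsidiary radii are preserved; boundary behavior at the transition value $|\mu| = \omega\rho^{-1}$ is harmless since it corresponds to the boundary radius $\omega\rho$, which is excluded from the range of the statement.
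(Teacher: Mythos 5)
This proposition is quoted verbatim from Kedlaya's book (\cite{Ked22}, Corollary 6.5.4); the paper supplies no proof of its own, so there is nothing internal to compare your argument against. Your sketch does reconstruct the structure of the argument in the cited source: a spectral-norm computation for cyclic modules in terms of the Newton polygon, a slope factorization of the twisted polynomial, and additivity of subsidiary radii in exact sequences (which follows, as you say, from the uniqueness of the strong decomposition). Two caveats are worth recording. First, the full pure-slope factorization $\ell=\ell_1\cdots\ell_k$ you posit is \emph{not} available for all slopes: the Newton polygon of a twisted polynomial over $F_\rho$ is only multiplicative, and Hensel-type factorization only applies, for slopes below $-\log|\tfrac{d}{dx}|_{F_\rho}=\log\rho$, i.e.\ precisely for the effective slopes. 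The correct move is to split off the effective part slope by slope and leave the non-effective part as a single block, whose subsidiary radii are all $\geq\omega\rho$ because its top spectral norm is at most $\max(\rho^{-1},\omega\rho^{-1})=\rho^{-1}$; this block then falls outside the range of the statement, so nothing is lost. Second, your rank-one formula ``$R(M)=\rho$ otherwise'' is imprecise in the intermediate range $\omega\rho^{-1}<|\mu|\leq\rho^{-1}$, where $|D|_{\mathrm{sp}}=|\mu|$ still dominates $|\tfrac{d}{dx}|_{\mathrm{sp},F_\rho}$ and $R(M)=\omega/|\mu|\in[\omega\rho,\rho)$; the conclusion you actually use (radius $\geq\omega\rho$ in the non-effective case) remains correct. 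With these adjustments the sketch is a faithful outline of the proof in \cite{Ked22}, and the genuinely hard ingredients you identify (the non-commutative factorization theorem and the lower bound for the spectral norm) are exactly the content of Kedlaya's Theorem 6.4.8 and Theorem 6.5.3.
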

	
	\subsection{Variation of generic and subsidiary radii}
	
	\begin{definition}
		Let $\theta,\gamma\in(0,1)$.
		
		(1) The \textit{ring of analytic functions} on the interval $[0,\gamma]$ (resp. $[0,1)$) is defined as
		\[\mathcal{A}_\gamma=\Big\{\sum_{i=0}^{\infty}a_ix^i~\Big|~a_i\in k,\lim_{i\rightarrow\infty}|a_i|\gamma^i=0\Big\}.\]
		\[\mathrm{(resp.}\quad\mathcal{A}=\Big\{\sum_{i=0}^{\infty}a_ix^i~\Big|~a_i\in k,\lim_{i\rightarrow\infty}|a_i|\eta^i=0,\forall~\eta\in(0,1)\Big\}\mathrm{)}.\]
		Let $\mathcal{A}_{\gamma,x}=\mathcal{A}_\gamma[x^{-1}]$, $\mathcal{A}_x=\mathcal{A}[x^{-1}]$ be the localization of $\mathcal{A}_\gamma$, $\mathcal{A}$ at $x$ respectively.
		
		(2) The \textit{ring of analytic functions} on the interval $[\theta,1)$ is defined as
		\[\mathcal{A}_{[\theta,1)}=\Big\{\sum_{i\in\mathbb{Z}}a_ix^i~\Big|~a_i\in k,\lim_{i\rightarrow-\infty}|a_i|\theta^i=0;\lim_{i\rightarrow\infty}|a_i|\eta^i=0,\forall~\eta\in(0,1)\Big\}.\]
		
		(3) The \textit{Robba ring} over $k$ is defined as
		\[\mathcal{R}=\lim_{\theta\rightarrow1^-}\mathcal{A}_{[\theta,1)}=\Big\{\sum_{i\in\mathbb{Z}} a_ix^i~\Big|~\lim_{i\rightarrow -\infty}|a_i|\xi^i=0,\exists~\xi\in(0,1);\lim_{i\rightarrow\infty}|a_i|\eta^i=0,\forall~\eta\in(0,1)\Big\}.\]
	\end{definition}
	
	\begin{rem}
		Let $S$ be one of the following rings: $\mathcal{A},\mathcal{A}_x,\mathcal{A}_\gamma,\mathcal{A}_{\gamma,x},\mathcal{A}_{[\theta,1)}$. For all appropriate values of $\rho$ in the table below, we have $S\subseteq F_\rho$, and the $\rho$-Gauss norm on $S$ is exactly
		\begin{equation}
			\Big|\sum_{i\in\mathbb{Z}}a_ix^i\Big|_\rho=\sup_i\{|a_i|\rho^i\}.
		\end{equation}
		
		\begin{table}[htbp]
			\centering
			\begin{tabular}{|c|c|c|c|}
				\hline
				$S$ & $\mathcal{A},\mathcal{A}_x$ & $\mathcal{A}_\gamma,\mathcal{A}_{\gamma,x}$ & $\mathcal{A}_{[\theta,1)}$ \\ 
				\hline
				$\rho$   & $\rho\in(0,1)$   &  $\rho\in(0,\gamma]$  & $\rho\in[\theta,1)$  \\
				\hline
			\end{tabular}
		\end{table}
	\end{rem}
	
	\begin{definition}
		Let $\theta,\gamma\in(0,1)$ and let $S$ be one of the following rings: $\mathcal{A},\mathcal{A}_x,\mathcal{A}_\gamma,\mathcal{A}_{\gamma,x},\mathcal{A}_{[\theta,1)}$. Let $M$ be a free differential module of rank $n$ over $(S,\frac{d}{dx})$.
		
		(1) For all appropriate values of $\rho$ in the above table, let
		\[R_1(M,\rho)\leq\cdots\leq R_n(M,\rho)\]
		denote all subsidiary radii of $M_\rho=M\otimes_SF_\rho$ listed in the increasing order. 
		
		(2) Define the functions
		\[f_i(M,r)=-\mathrm{log}R_i(M,\mathrm{exp}(-r)),\quad F_i(M,r)=\sum_{k=1}^{i}f_k(M,r)\]
		for all $1\leq i\leq n$, where $\mathrm{log}$ denotes the natural logarithm.
	\end{definition}
	
	\subsection{$P$-adic slopes}
	
	\begin{definition}
		$\mathrm{(}$\cite{C-M00}$\mathrm{,~D\'efinition~4.1\mbox{-}1)}$ A free differential module $M$ of finite rank over $(\mathcal{A}_x,\frac{d}{dx})$ is called \textit{solvable} if $\lim\limits_{\rho\rightarrow1^{-}}R_1(M,\rho)=1$.
	\end{definition}
	
	\begin{prop}
		$\mathrm{(}$\cite{C-M00}$\mathrm{,~Th\'eor\`eme~4.2\mbox{-}1~\&~}$\cite{Ked22}$\mathrm{,~Lemma~12.6.2)}$ Let $M$ be a solvable differential module of rank $n$ over $(\mathcal{A}_x,\frac{d}{dx})$.
		
		(1) There exist non-negative rational numbers $\alpha_1\geq\cdots\geq\alpha_n$ and some $\epsilon\in(0,1)$ such that
		\begin{equation}
			R_i(M,\rho)=\rho^{1+\alpha_i},\quad\forall~1\leq i\leq n,\quad\forall~\rho\in(1-\epsilon,1).
		\end{equation}
		
		(2) The summation $\sum_{i=1}^{n}\alpha_i$ is an integer.
	\end{prop}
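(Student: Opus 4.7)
The plan is to prove part (1) by combining standard structural properties of the subsidiary-radius functions $f_i(M,r)$ with the solvability hypothesis, and to reduce integrality in part (2) to the rank one case via the top exterior power.

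For part (1), the general theory of subsidiary radii (cf.\ \cite{Ked22}, Chapter 11) tells us that each $f_i(M,r)$ is continuous, piecewise affine, and convex on a right-neighborhood of $0$, with the slopes of the affine pieces being rational numbers. The universal bound $R_i(M,\rho)\leq\rho$ translates to $f_i(M,r)\geq r$, so every such slope is $\geq 1$; write the slope on the rightmost affine piece as $1+\alpha_i$ with $\alpha_i\in\mathbb{Q}_{\geq 0}$. Solvability says $f_1(M,r)\to 0$ as $r\to 0^+$, and since $f_1\geq f_2\geq\cdots\geq f_n\geq 0$ (from the ordering $R_1\leq\cdots\leq R_n$ and the fact that $-\mathrm{log}$ is decreasing), all $f_i(M,r)\to 0$. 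Combined with piecewise affineness this forces $f_i(M,r)=(1+\alpha_i)r$ on a common interval $(0,\epsilon')$, converting back to $R_i(M,\rho)=\rho^{1+\alpha_i}$ for $\rho\in(1-\epsilon,1)$. The ordering $\alpha_1\geq\cdots\geq\alpha_n$ is then immediate from $f_1\geq\cdots\geq f_n$ on $(0,\epsilon')$.

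For part (2), pass to the top exterior power $\det M=\bigwedge^n M$, a rank one solvable differential module whose unique subsidiary radius at $\rho$ equals the product $\prod_{i=1}^n R_i(M,\rho)=\rho^{n+\sum_i\alpha_i}$ for $\rho$ close to $1$. Applying part (1) to $\det M$, its $p$-adic slope $\beta\in\mathbb{Q}_{\geq 0}$ satisfies $\beta=n-1+\sum_i\alpha_i$, so integrality of $\sum_i\alpha_i$ is equivalent to integrality of $\beta$. Thus it suffices to prove integrality of the $p$-adic slope for rank one solvable modules. Any such module over $\mathcal{A}_x$ can be written as $\mathrm{exp}(a(x))$ for some $a(x)\in\mathcal{A}_x$; invoking the classification of solvable rank one modules (Pu's theorem, cited in Remark 1.3), $a$ may be modified by an exact term so that only a finite Laurent polynomial principal part in $x^{-1}$ contributes to the generic radius. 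A direct Newton polygon computation for $\frac{d}{dx}-a'(x)$ on this normalized form then yields an integer $p$-adic slope, equal to the order of the pole of $a$ at $0$ minus $1$.

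The main obstacle is part (2). Part (1) is essentially formal once the structural lemmas for $f_i$ are in hand, but integrality of $\sum_i\alpha_i$ is non-formal: it reflects the stronger analyticity enjoyed by $\mathcal{A}_x$ (analytic functions on the entire punctured disc) compared to the Robba ring $\mathcal{R}$, and rests on the rank one classification to ensure that the convergent, non-principal tail of $a(x)$ cannot produce a fractional contribution to the spectral radius of $\frac{d}{dx}-a'(x)$.
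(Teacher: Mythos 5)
First, note that the paper does not prove this proposition at all: it is imported verbatim from Christol--Mebkhout (Th\'eor\`eme 4.2-1) and Kedlaya (Lemma 12.6.2), so the only question is whether your argument is correct on its own terms.

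Part (1) follows the standard route and is essentially right, with two slips. Each $f_i(M,r)$ is \emph{not} convex in general; only the partial sums $F_i(M,r)=\sum_{k\le i}f_k(M,r)$ are (cf.\ Proposition 4.15 of the paper), so the argument should be run on the $F_i$ and then differenced. More importantly, ``piecewise affine with rational slopes'' does not by itself rule out infinitely many affine pieces accumulating at $r=0$; you need that the slopes of $F_i$ lie in a \emph{discrete} set (Kedlaya's Theorem 11.3.2 puts them in $\bigcup_{j\le i}\frac1j\mathbb{Z}$), which together with convexity and the lower bound $F_i\ge ir$ forces the slope to stabilize near $r=0$. That finiteness is really the content of the cited lemma, so you are partly re-deriving what you are quoting.

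Part (2) has a genuine gap: the identity $R_1(\det M,\rho)=\prod_{i=1}^nR_i(M,\rho)$ is false. For tensor and exterior powers one only has $R_1(M_1\otimes M_2,\rho)\ge\min\{R_1(M_1,\rho),R_1(M_2,\rho)\}$, with equality when the radii differ; cancellation can make the determinant much less ramified than the product predicts. Concretely, take $M=\mathrm{exp}(\frac{\pi}{x})\oplus\mathrm{exp}(-\frac{\pi}{x})$: both summands have radius $\rho^{2}$ near $\rho=1$, so $\prod_iR_i(M,\rho)=\rho^{4}$, yet $\det M$ is the trivial rank one module with radius $\rho$. So the reduction of integrality to the rank one case via $\det M$ does not work (and the subsequent appeal to Pulita's classification is in any case a much heavier import than needed). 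The correct mechanism is the integrality clause of Kedlaya's Theorem 11.3.2: for $i=n$ the slopes of $F_n(M,r)$ are integers, and by part (1) the slope of $F_n$ near $r=0$ equals $n+\sum_{i}\alpha_i$, which gives $\sum_i\alpha_i\in\mathbb{Z}$ directly.
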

	
	\begin{definition}
		Let $M$ be a solvable differential module of rank $n$ over $(\mathcal{A}_x,\frac{d}{dx})$.
		
		(1) The non-negative rational numbers $\alpha_1\geq\cdots\geq\alpha_n$ in (4.12.1) are called the $p$-\textit{adic slopes} of $M$.
		
		(2) The \textit{irregularity} of $M$ is an integer defined by
		\[\mathrm{Irr}(M)=\sum_{i=1}^{n}\alpha_i.\]
		
		(3) A solvable module is called \textit{pure of slope} $\alpha$ if all of its $p$-adic slopes are equal to $\alpha$.
	\end{definition}
	
	\begin{theorem}
		$\mathrm{(}$\cite{C-M00}$\mathrm{,~Corollaire~8.3\mbox{-}10,~Slope~Decomposition)}$ Let $M$ be a solvable differential module over $\mathcal{A}_x$. Then there exists a unique decomposition
		\[M\simeq\bigoplus\limits_{i=1}^{m}M(\alpha_i),\]
		where $\alpha_1>\cdots>\alpha_m\geq0$ and the direct summand $M(\alpha_i)$ is pure of slope $\alpha_i$. 
	\end{theorem}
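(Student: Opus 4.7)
The plan is to construct the decomposition by applying the strong decomposition (Theorem~4.4) in a family indexed by $\rho$ approaching $1$, glue these local decompositions to one over an annulus near the boundary of the open unit disc, and finally descend the result from the annulus to $\mathcal{A}_x$. Uniqueness will then be inherited from the uniqueness clause in the strong decomposition.

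By Proposition~4.12, for every $\rho$ in an interval $(1-\epsilon, 1)$ the subsidiary radii of $M \otimes_{\mathcal{A}_x} F_\rho$ are exactly $\rho^{1+\alpha_j}$, where $\alpha_1 > \cdots > \alpha_m \geq 0$ enumerate the distinct $p$-adic slopes of $M$. Applying Theorem~4.4 at each such $\rho$ and grouping by the distinct $\alpha_i$ yields a canonical decomposition
\[
M \otimes_{\mathcal{A}_x} F_\rho \;\simeq\; \bigoplus_{i=1}^{m} M_\rho(\alpha_i),
\]
in which every subsidiary radius of $M_\rho(\alpha_i)$ equals $\rho^{1+\alpha_i}$. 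The uniqueness part of the strong decomposition forces the associated idempotents $\pi_i(\rho) \in \End(M) \otimes_{\mathcal{A}_x} F_\rho$ to be compatible as $\rho$ varies in $(1-\epsilon, 1)$, and I would package this compatibility into idempotents $\pi_i \in \End(M) \otimes_{\mathcal{A}_x} \mathcal{A}_{[\theta,1)}$ for some $\theta$ close enough to $1$, producing a decomposition $M \otimes_{\mathcal{A}_x} \mathcal{A}_{[\theta,1)} \simeq \bigoplus_{i=1}^{m} N_i$ with $N_i$ pure of slope $\alpha_i$ at every $\rho \in [\theta, 1)$.

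The decisive step is then to descend this annular decomposition to $\mathcal{A}_x$. Because each $\pi_i$ is canonical, it commutes with the differential operator on $\End(M)$, i.e.\ it is a horizontal idempotent. I would represent the entries of $\pi_i$ in a basis of $M$ defined over $\mathcal{A}_x$ as solutions of a linear differential system with coefficients in $\mathcal{A}_x$, and then use the solvability of $M$ — which controls the growth of such solutions near the origin — to show that the Laurent expansion of each entry of $\pi_i$ at $0$ has only finitely many negative terms, placing $\pi_i$ in $\End(M) \otimes_{\mathcal{A}_x} \mathcal{A}_x = \End(M)$. The induced decomposition of $M$ itself is then the desired one, and uniqueness follows because any such decomposition must specialize at every $F_\rho$ to the canonical strong decomposition.

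The main obstacle is precisely this descent step. A generic element of $\mathcal{A}_{[\theta,1)}$ may carry an infinite principal part in $x^{-1}$, whereas $\mathcal{A}_x = \mathcal{A}[x^{-1}]$ allows only finitely many negative powers of $x$. Ruling out essential singularities at $0$ for a horizontal idempotent of a solvable differential module is the technical heart of Christol–Mebkhout's theorem, and it is the place where the solvability hypothesis — which is unused in Steps~1 and 2 for fixed $\rho$ — enters in an essential way.
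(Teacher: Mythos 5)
This theorem is not proved in the paper at all: it is quoted verbatim from Christol--Mebkhout (\cite{C-M00}, Corollaire 8.3-10), so there is no in-paper argument to compare against and your sketch has to be judged against the actual proof in the literature. As an outline it identifies the right objects (horizontal idempotents refining the strong decomposition near the boundary), but both of its decisive steps are deferred rather than carried out, and in each case the deferral hides the real content. First, the passage from the family of idempotents $\pi_i(\rho)\in\End(M)\otimes_{\mathcal{A}_x}F_\rho$ to a single idempotent with entries in $\mathcal{A}_{[\theta,1)}$ is not a matter of ``packaging compatibility'': the fields $F_\rho$ for distinct $\rho$ are completions with respect to inequivalent norms, they do not sit inside a common overring in which one could intersect them, and an element defined consistently at each $\rho$ need not be given by a single Laurent series convergent on the annulus. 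Producing the global projector is the heart of the decomposition theory over annuli (Kedlaya's Chapter~12, or Christol--Mebkhout's own construction); it proceeds by an explicit successive-approximation of the projector and needs quantitative separation of the radius functions $f_j(M,r)$ across the whole annulus, not just their values at each fixed $\rho$.

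Second, the descent from $\mathcal{A}_{[\theta,1)}$ to $\mathcal{A}_x$, which you correctly flag as the main obstacle, is not something your sketch makes plausible. The heuristic you offer --- that solvability ``controls the growth of such solutions near the origin'' --- points in the wrong direction: solvability is a condition on the generic radii as $\rho\to 1^-$, i.e.\ at the outer boundary, and by itself says nothing about the behaviour of a horizontal section of $\End(M)$ as $x\to 0$; a horizontal section over an annulus genuinely can have an essential singularity at the puncture for a non-solvable module, and excluding this for solvable ones is exactly what Christol--Mebkhout's index theory (the finiteness and integrality of the index/irregularity developed over papers I--III) is for. So the proposal is a reasonable roadmap but not a proof: the two steps it labels ``I would package'' and ``I would show'' are precisely the two theorems one would need to prove, and neither follows from the ingredients (Theorem~4.4 and Proposition~4.12) actually invoked.
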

	
	\begin{prop}
		Let $M$ be a solvable differential module of rank $n$ over $(\mathcal{A}_x,\frac{d}{dx})$, then
		
		(1) The functions $f_i(M,r)$ and $F_i(M,r)$ are continuous and piecewise affine on the interval $(0,\infty)$ for all $1\leq i\leq n$.
		
		(2) The function $F_i(M,r)$ is convex on the interval $(0,\infty)$ for all $1\leq i\leq n$.
	\end{prop}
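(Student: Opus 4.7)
Both claims are local in $r$, so it suffices to prove them on an arbitrary compact subinterval $[r_1, r_2] \subset (0,\infty)$. My plan is to reduce to a cyclic twisted polynomial over the ring of analytic functions on the corresponding closed subannulus, and to read off the required properties from the variation of its Newton polygon together with the concavity of $r \mapsto v_\rho(a)$ for $a \in \mathcal{A}_x$.

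Fix such $[r_1, r_2]$ and set $\theta_i = \exp(-r_{3-i})$. Extending scalars, let $M' = M \otimes_{\mathcal{A}_x} \mathcal{A}_{[\theta_1,\theta_2]}$, where $\mathcal{A}_{[\theta_1,\theta_2]}$ denotes analytic functions on the closed annulus $\theta_1 \leq |x| \leq \theta_2$; since $\mathcal{A}_x \hookrightarrow \mathcal{A}_{[\theta_1,\theta_2]} \hookrightarrow F_\rho$ for each $\rho \in [\theta_1,\theta_2]$, the subsidiary radii of $M_\rho$ and $M'_\rho$ agree on this range. After possibly shrinking the annulus, $M'$ admits a cyclic vector, so we may write $M' \cong \mathcal{A}_{[\theta_1,\theta_2]}\{T\}/\mathcal{A}_{[\theta_1,\theta_2]}\{T\}\,\ell$ with $\ell = T^n + \sum_{i=0}^{n-1} a_i T^i$. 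For each $a_i = \sum_j a_{i,j} x^j$, the growth conditions on $\mathcal{A}_{[\theta_1,\theta_2]}$ force only finitely many $j$ to contribute to $v_\rho(a_i) = \inf_j (v_1(a_{i,j}) + j r)$ uniformly on $[r_1,r_2]$, so $r \mapsto v_\rho(a_i)$ is continuous, concave, and piecewise affine. Consequently the slopes $\lambda_1(r) \leq \cdots \leq \lambda_n(r)$ of $\mathrm{NP}_\rho(\ell)$ depend continuously and piecewise affinely on $r$.

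In the regime of effective slopes, Proposition 4.7 identifies $f_j(M,r) = -\log \omega - \lambda_j(r)$ directly, yielding continuity and piecewise affinity. Moreover, using monicity so that $v_\rho(a_n) = 0$, the partial sum $\sum_{j=1}^i \lambda_j(r)$ equals the height of the lower convex hull $\mathrm{NP}_\rho(\ell)$ at abscissa $-n+i$, which is a pointwise infimum of convex combinations (with nonnegative coefficients summing to $1$) of the concave functions $v_\rho(a_j)(r)$. Since an infimum of concave functions is again concave, this partial sum is concave in $r$, which translates to convexity of the corresponding $F_i(M,r)$.

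To treat the radii with $R_i(M,\rho) \geq \omega\rho$, which cover the entire regime near $\rho = 1$ for a solvable $M$ since there $R_i = \rho^{1+\alpha_i} > \omega\rho$, Proposition 4.7 does not apply directly. The standard remedy, as in Kedlaya \cite{Ked22} Chapter 11, is to replace $M$ by a pullback along a $p$-power Frobenius $x \mapsto x^p$, iterated as needed; this rescales all subsidiary radii (essentially by $p$-th powers) so that previously large radii fall into the effective range already analyzed. The main obstacle I anticipate is precisely this Frobenius descent: while continuity and piecewise affinity transport transparently under the rescaling, verifying that convexity of the partial sums $F_i(M,r)$ survives it requires a careful, slope-by-slope tracking of the effect of the $p$-th power rescaling on the broken linear pieces. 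Once this step is in place, the conclusion holds on $[r_1,r_2]$, and hence on all of $(0,\infty)$.
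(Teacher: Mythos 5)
Your proposal has a genuine gap, and you have flagged it yourself: the entire regime where $R_i(M,\rho)\geq\omega\rho$ is left unhandled. For a solvable module this is not a corner case — near $\rho=1$ one has $R_i(M,\rho)=\rho^{1+\alpha_i}\to 1$, so \emph{all} subsidiary radii eventually exceed $\omega\rho$ and Proposition 4.7 gives you nothing there. Your plan to repair this by iterated Frobenius pullback is exactly the hard technical core of Kedlaya's Theorem 11.3.2 (Frobenius antecedents, off-center Frobenius, and the bookkeeping of how radii and partial sums transform), and you explicitly do not carry it out. In effect you are attempting to reprove Kedlaya's variation theorem from scratch and stopping at its most delicate step, so the argument as written does not close. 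The paper avoids this entirely: it cites Theorem 11.3.2 of \cite{Ked22} as a black box to get continuity, piecewise affinity, and convexity of $F_i$ on $r\in[-\log\gamma,\infty)$ (i.e.\ $\rho\in(0,\gamma]$), and then observes that solvability gives the \emph{exact} formula $F_i(M,r)=\bigl(i+\sum_{j\leq i}\alpha_j\bigr)r$ on $(0,-\log(1-\epsilon))$, so $F_i$ is literally linear there; choosing $\gamma>1-\epsilon$ makes the two intervals overlap and the properties glue. If you want a self-contained argument in the effective range only, your Newton-polygon concavity computation is fine, but you still need either the citation or the full Frobenius machinery for the rest.

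A secondary problem is the cyclic vector step. The ring of analytic functions on a closed annulus is not a field, so Theorem 2.3 does not apply directly; passing to the fraction field produces a change-of-basis matrix whose determinant may vanish at radii \emph{in the interior} of $[\theta_1,\theta_2]$, and ``possibly shrinking the annulus'' then loses part of the very interval $[r_1,r_2]$ you set out to cover. (Compare Lemma 5.1 of the paper, where the analogous issue for $\mathcal{A}_x$ is resolved only because one is free to shrink toward $x=0$; on a fixed compact subannulus you have no such freedom.) You would need to excise finitely many bad radii and extend by continuity, or avoid cyclic vectors altogether as Kedlaya does.
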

	
	\begin{proof}
		According to Proposition 4.12, there exist non-negative rational numbers $\alpha_1\geq\cdots\geq\alpha_n$ and some $\epsilon\in(0,1)$ such that (4.12.1) holds.
		
		Fix some $\gamma\in(1-\epsilon,1)$. According to \cite{Ked22} Theorem 11.3.2, for all $1\leq i\leq n$ and on the interval $[-\mathrm{log}\gamma,\infty)$, $f_i(M,r)$ and $F_i(M,r)$ are continuous and piecewise affine, and $F_i(M,r)$ is convex.
		
		Meanwhile, (4.12.1) shows that on the interval $(0,-\mathrm{log}(1-\epsilon))$, we have
		\[f_i(M,r)=(1+\alpha_i)r,\quad F_i(M,r)=\Big(i+\sum_{j=1}^{i}\alpha_j\Big)r,\quad\forall~1\leq i\leq n.\]
		Therefore, $f_i(M,r)$ and $F_i(M,r)$ are continuous and piecewise affine, and $F_i(M,r)$ is convex.
		
		We conclude the proof by showing that the intersection $(0,-\mathrm{log}(1-\epsilon))\cap[-\mathrm{log}\gamma,\infty)$ is nonempty, which is evident since $\gamma>1-\epsilon$. 
	\end{proof}
	
	\section{Proof of theorem 1.1}
	
	\subsection{Discussion of cyclic basis}
	
	Our proof of Theorem 1.1 relies on the existence of a cyclic basis, so that we can work with Newton polygons. However, $M$ may not have a cyclic basis since $\mathcal{A}_x$ is not a field. Instead, we consider the ring $\mathcal{A}_{\gamma,x}$ in Definition 4.8. For the later proof in Section 5.3 and 5.4, it suffices to show that $\mathcal{A}_{\gamma,x}\otimes_{\mathcal{A}_x}M$ has a cyclic basis.
	
	\begin{lemma}
		Let $(M,D)$ be a free differential module of rank $n$ over $(\mathcal{A}_x,\frac{d}{dx})$. Then there exists $\gamma\in(0,1)$ such that $\mathcal{A}_{\gamma,x}\otimes_{\mathcal{A}_x}M$ has a cyclic basis.
	\end{lemma}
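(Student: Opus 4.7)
The approach is to reduce to the classical cyclic vector theorem applied over the differential field $\operatorname{Frac}(\mathcal{A}_x)$, and then spread out the resulting cyclic vector to $\mathcal{A}_{\gamma,x}$ by choosing $\gamma$ small enough that the change-of-basis determinant becomes a unit. The algebraic input is that $\mathcal{A}_x$ is an integral domain (it embeds into $k((x))$) whose fraction field carries the nontrivial derivation $\frac{d}{dx}$, so Theorem 2.3 applies; the analytic input is that any nonzero $\delta \in \mathcal{A}_x$ may be written as $x^N h$ with $h(0) \neq 0$, so that $\delta$ becomes a unit in $\mathcal{A}_{\gamma,x}$ once $\gamma$ is small enough for $h$ to be nowhere vanishing on the closed disc of radius $\gamma$.

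Concretely, I would carry out three steps. First, I apply Theorem 2.3 to the differential field $\operatorname{Frac}(\mathcal{A}_x)$ to obtain a cyclic vector $\tilde v$ for $M \otimes_{\mathcal{A}_x} \operatorname{Frac}(\mathcal{A}_x)$, and then clear denominators by replacing $\tilde v$ with $v = f \tilde v$ for some nonzero $f \in \mathcal{A}_x$. A direct Leibniz computation shows that the transition matrix from $(\tilde v, D\tilde v, \ldots, D^{n-1}\tilde v)$ to $(v, Dv, \ldots, D^{n-1}v)$ is lower triangular with $f$ on the diagonal, so $v \in M$ is still cyclic over the fraction field. Second, fixing an $\mathcal{A}_x$-basis $e_1, \ldots, e_n$ of $M$, I let $\delta \in \mathcal{A}_x$ be the determinant of the matrix expressing $(v, Dv, \ldots, D^{n-1}v)$ in terms of this basis; by the previous step $\delta \neq 0$. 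Third, I factor $\delta = x^N h$ with $N \in \mathbb{Z}$ and $h \in \mathcal{A}$ satisfying $h(0) \neq 0$, and I choose $\gamma \in (0,1)$ small enough that $h$ is nowhere vanishing on the closed disc $|x| \leq \gamma$.

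The main point to verify carefully is that such an $h$ is indeed a unit in $\mathcal{A}_\gamma$. This is standard: a nowhere-vanishing element of the Tate algebra of a closed affinoid disc is invertible, which one can see directly via the maximum principle by bounding $|1/h|$ uniformly on $|x| \leq \gamma$ and checking that the resulting Taylor expansion of $1/h$ converges in $\mathcal{A}_\gamma$. Once this is in place, $\delta = x^N h$ is a unit in $\mathcal{A}_{\gamma,x} = \mathcal{A}_\gamma[x^{-1}]$, so the matrix above becomes invertible over $\mathcal{A}_{\gamma,x}$, and $v, Dv, \ldots, D^{n-1}v$ is the desired cyclic basis of $\mathcal{A}_{\gamma,x} \otimes_{\mathcal{A}_x} M$.
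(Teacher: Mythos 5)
Your proposal is correct and follows essentially the same route as the paper: apply the cyclic vector theorem over $\operatorname{Frac}(\mathcal{A}_x)$, then shrink the disc so that the relevant determinant becomes a unit in $\mathcal{A}_{\gamma,x}$ (using that an element of $\mathcal{A}$ with nonzero constant term is invertible in $\mathcal{A}_\gamma$ for $\gamma$ small). Your preliminary step of clearing denominators so that $v\in M$ is a mild streamlining — it lets you invert a single element $h$ rather than, as in the paper, all the denominators of the entries of the transition matrix together with the leading unit of its determinant — but the substance is identical.
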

	
	\begin{proof}
		Let $e_1,\cdots,e_n$ be a basis of $M$. Let $\mathrm{Frac}(\mathcal{A}_x)$ denote the fraction field of $\mathcal{A}_x$.
		
		According to Theorem 2.3, $\mathrm{Frac}(\mathcal{A}_x)\otimes_{\mathcal{A}_x}M$ has a cyclic vector $m$. We have
		\begin{equation}
			(m,D(m),\cdots,D^{n-1}(m))=(e_1,\cdots,e_n)G,
		\end{equation}
		where the matrix $G=(g_{ij})_{n\times n}$ is invertible over $\mathrm{Frac}(\mathcal{A}_x)$.
		
		Each $g_{ij}$ can be uniquely written in the form $g_{ij}=x^{u_{ij}}\frac{s_{ij}}{t_{ij}}$, where $u_{ij}\in\mathbb{Z},~s_{ij},t_{ij}\in\mathcal{A}$ and the constant terms of $s_{ij},t_{ij}$ are nonzero.
		
		Similarly, $\mathrm{det}(G)$ can be uniquely written in the form $\mathrm{det}(G)=x^u\frac{s}{t}$, where $u\in\mathbb{Z},~s,t\in\mathcal{A}$ and the constant terms of $s,t$ are nonzero.
		
		Fix an algebraic closure $\overline{k}$ of $k$ and view $t_{ij}$ as a continuous function on the open unit disc $\{a\in\overline{k}:|a|<1\}$. Since $t_{ij}(0)\neq0$, there exists $\gamma_{ij}\in(0,1)$ such that $t_{ij}$ is nonzero on the neighbourhood $\{a\in\overline{k}:|a|\leq\gamma_{ij}\}$ of $0$. Therefore, $t_{ij}$ is invertible in $\mathcal{A}_{\gamma_{ij}}$.
		
		Similarly, there exists $\gamma_0\in(0,1)$ such that $s$ is invertible in $\mathcal{A}_{\gamma_0}$.
		
		Let
		\[\gamma'=\min_{1\leq i,j\leq n}\{\gamma_{ij}\},\quad\gamma=\min\{\gamma',\gamma_0\}.\]
		Now all of the $t_{ij}(1\leq i,j\leq n)$ and $s$ are invertible in $\mathcal{A}_\gamma$. Consequently, $G$ is invertible in $\mathcal{A}_{\gamma,x}$.
		
		We can therefore conclude from (5.1.1) that $\mathcal{A}_{\gamma,x}\otimes_{\mathcal{A}_x}M$ has a cyclic basis.
	\end{proof}
	
	\subsection{Computation of the $\rho$-Gauss norm}
	
	Fix some $\gamma\in(0,1)$. Let $\rho\in(0,\gamma]$, then $\mathcal{A}_{\gamma,x}$ is a subring of $F_\rho$ equipped with the $\rho$-Gauss norm $|\cdot|_\rho$. We establish a formula for computing the $\rho$-Gauss norm of a nonzero element of $\mathcal{A}_{\gamma,x}$ when $\rho$ is sufficiently small. This is the key to our proof of Theorem 1.1.
	
	\begin{lemma}
		Let $f=\sum_{i=m}^{\infty}b_ix^i$ be a nonzero element of $\mathcal{A}_{\gamma,x}$, where $b_i\in k,b_m\neq0$. There exists a constant $c>0$ such that
		\[|f|_\rho=|b_m|\rho^m=|b_m|\rho^{\mathrm{ord}_x(f)},\quad\forall\rho\in(0,c].\]
	\end{lemma}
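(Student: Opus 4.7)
The plan is to reduce the claim to showing that, for $\rho$ sufficiently small, the single term $|b_m|\rho^m$ strictly dominates every other term $|b_i|\rho^i$ appearing in the supremum $|f|_\rho=\sup_i|b_i|\rho^i$; then the ultrametric nature of the $\rho$-Gauss norm forces equality $|f|_\rho=|b_m|\rho^m$, and the two stated formulas collapse into one since $\mathrm{ord}_x(f)=m$ by definition.

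First I would invoke the definition of $\mathcal{A}_{\gamma,x}=\mathcal{A}_\gamma[x^{-1}]$ from Definition 4.8 to decompose $f=f^-+f^+$, where $f^-=\sum_{i=m}^{-1}b_ix^i$ is a (possibly empty) Laurent polynomial and $f^+=\sum_{i=0}^{\infty}b_ix^i\in\mathcal{A}_\gamma$. The defining convergence condition $\lim_{i\to\infty}|b_i|\gamma^i=0$ supplies a uniform bound $B:=\sup_{i\geq 0}|b_i|\gamma^i<\infty$. For any $\rho\in(0,\gamma]$ and any $i\geq 0$ this yields $|b_i|\rho^i\leq B(\rho/\gamma)^i$, an estimate with exponential decay in $i$ whenever $\rho<\gamma$.

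Next I would handle the two ranges of $i$ separately. For the finitely many indices $m\leq i\leq -1$ (empty when $m\geq 0$), the desired inequality $|b_i|\rho^i<|b_m|\rho^m$ is equivalent to $\rho^{i-m}<|b_m|/|b_i|$, which holds whenever $\rho<(|b_m|/|b_i|)^{1/(i-m)}$; minimizing over this finite collection gives one threshold. For the tail $i\geq 0$ I would split on the sign of $m$: if $m<0$, then $|b_m|\rho^m\to\infty$ as $\rho\to 0$ while $|b_i|\rho^i\leq B$ for every $i\geq 0$ and $\rho\leq\gamma$, so the inequality $|b_m|\rho^m>B$ gives the needed threshold; if $m\geq 0$, then every $i>m$ satisfies $|b_i|\rho^i\leq B(\rho/\gamma)^i\leq B(\rho/\gamma)^{m+1}$, and comparing with $|b_m|\rho^m$ reduces to the linear inequality $\rho<|b_m|\gamma^{m+1}/B$. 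Taking $c$ to be the minimum of $\gamma$ and all of these explicit thresholds then finishes the argument.

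The argument is essentially careful bookkeeping, so there is no substantial obstacle; the only mildly delicate point is the case split on the sign of $m$, since the two regimes, where $|b_m|\rho^m$ either blows up or vanishes as $\rho\to 0$, require slightly different comparisons against the tail bound $B(\rho/\gamma)^i$. Once this is organized, the ultrametric identity $|f|_\rho=\sup_i|b_i|\rho^i$ together with the strict domination established above immediately yields $|f|_\rho=|b_m|\rho^m$ throughout $(0,c]$.
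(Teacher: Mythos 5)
Your argument is correct, and the underlying strategy is the same as the paper's: show that for $\rho$ small enough the single term $|b_m|\rho^m$ strictly dominates every other term in $\sup_i|b_i|\rho^i$, then conclude by the formula (4.9.1) for the Gauss norm. The difference is in how the infinite tail is controlled. The paper fixes the reference radius $\tfrac{\gamma}{2}$, takes the largest index $k$ at which $|f|_{\gamma/2}$ is attained, handles the finitely many indices $m<i\le k$ by explicit thresholds, and kills the tail $i>k$ using $|b_i|(\tfrac{\gamma}{2})^i<|b_k|(\tfrac{\gamma}{2})^k$ together with $\rho\le\tfrac{\gamma}{2}$. You instead split the series at $i=0$ (the decomposition $f=f^-+f^+$ coming from $\mathcal{A}_{\gamma,x}=\mathcal{A}_\gamma[x^{-1}]$), bound the tail uniformly by $B=\sup_{i\ge0}|b_i|\gamma^i$ with the geometric estimate $|b_i|\rho^i\le B(\rho/\gamma)^i$, and then case-split on the sign of $m$ (blow-up of $|b_m|\rho^m$ when $m<0$ versus the linear threshold $\rho<|b_m|\gamma^{m+1}/B$ when $m\ge0$). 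Both are elementary and yield an explicit $c$; your version avoids introducing the auxiliary maximizing index $k$ at the cost of the sign dichotomy, while the paper's is uniform in $m$. The only cosmetic caveat in your write-up is that the thresholds $(|b_m|/|b_i|)^{1/(i-m)}$ should be restricted to those finitely many $i$ with $b_i\ne0$ (the terms with $b_i=0$ being trivially dominated), and similarly $B>0$ should be assumed in the tail comparison, the case $B=0$ being trivial.
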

	
	\begin{proof}
		Recall that, by (4.9.1), $|f|_\rho=\sup\limits_{i\geq m}\{|b_i|\rho^i\}$.
		
		Firstly, consider the $\frac{\gamma}{2}$-Gauss norm of $f$. We have 
		\[\lim\limits_{i\rightarrow\infty}|b_i|(\frac{\gamma}{2})^i\leq\lim\limits_{i\rightarrow\infty}|b_i|\gamma^i=0\]
		by definition of $\mathcal{A}_{\gamma,x}$, hence $\sup\limits_{i\geq m}\{|b_i|(\frac{\gamma}{2})^i\}<\infty$. Let $k$ be the largest integer such that
		\[|f|_\frac{\gamma}{2}=\sup_{i\geq m}\{|b_i|(\frac{\gamma}{2})^i\}=|b_k|(\frac{\gamma}{2})^k,\]
		then
		\begin{equation}
			|b_k|(\frac{\gamma}{2})^k>|b_i|(\frac{\gamma}{2})^i,\quad\forall~i>k.
		\end{equation}
		
		Pick a constant $c'>0$ such that
		\begin{equation}
			\rho^{m-i}>\Big|\frac{b_i}{b_m}\Big|,\quad\forall~m<i\leq k
		\end{equation}
		for all $\rho\in(0,c']$. We show that the desired constant is $c=\min\{c',\frac{\gamma}{2}\}$.
		
		For any $\rho\in(0,c]$, when $m<i\leq k$, we find that $|b_m|\rho^m>|b_i|\rho^i$ by our construction (5.2.2). When $i>k$, we infer from (5.2.1) that
		\[\Big|\frac{b_i}{b_k}\Big|<(\frac{\gamma}{2})^{k-i}\leq\rho^{k-i},\]
		hence $|b_m|\rho^m>|b_k|\rho^k>|b_i|\rho^i$.
		
		We conclude that $|b_m|\rho^m>|b_i|\rho^i$ for all $i>m$, i.e. $|f|_\rho=\sup\limits_{i\geq m}\{|b_i|\rho^i\}=|b_m|\rho^m$.
	\end{proof}
	
	\subsection{Analysis of Newton polygons}
	
	\begin{nota}
		In this subsection, we fix a real number $\gamma\in(0,1)$ and a non-constant, monic twisted polynomial $\ell=\sum_{i=0}^{n}a_iT^i\in\mathcal{A}_{\gamma,x}\{T\}$, where $a_i\in\mathcal{A}_{\gamma,x}$ and $a_n=1$. Write $a_i$ as a Laurent formal power series for $i=0,1\cdots,n$ as follows.
		\[a_i=b_ix^{\mathrm{ord}_x(a_i)}+\mathrm{higher} \ \mathrm{degree} \ \mathrm{terms},\quad b_i\in k^\times.\]
		
		Since $\mathcal{A}_{\gamma,x}$ can be naturally embedded into $k((x))$, we can define the formal slopes of $\mathcal{A}_{\gamma,x}\{T\}/\mathcal{A}_{\gamma,x}\{T\}\ell$ and write them in the decreasing order $\beta_1\geq\cdots\geq\beta_n$.
	\end{nota}
	
	\begin{rem}
		For any $\rho\in(0,\gamma]$, $\ell$ can be viewed as a twisted polynomial in $F_\rho\{T\}$ via the inclusion $\mathcal{A}_{\gamma,x}\subseteq F_\rho$. We can therefore talk about Newton polygons $\mathrm{NP}_\rho(\ell)$ in Definition 4.6.
		
		The main result of this subsection is Proposition 5.6, in which we compute the formal slopes of $\mathcal{A}_{\gamma,x}\{T\}/\mathcal{A}_{\gamma,x}\{T\}\ell$ via $\mathrm{NP}_\rho(\ell)$ when $\rho$ is sufficiently small. For comparison, see also Lemma 3.4, where we compute the formal slopes of $\mathcal{A}_{\gamma,x}\{T\}/\mathcal{A}_{\gamma,x}\{T\}\ell$ via $\mathrm{FNP}(\ell)$.
	\end{rem}
	
	\begin{con}
		According to Lemma 5.2, we can find a constant $c_1>0$ such that
		\begin{equation}
			|a_i|_\rho=|b_i|\rho^{\mathrm{ord}_x(a_i)},\quad i=0,1,\cdots,n
		\end{equation}
		for all $\rho\in(0,c_1]$.
		
		By solving a finite number of simple inequalities involving the variable $\rho$, we can find a constant $0<c_2<1$ such that
		\begin{equation}
			-\frac{1}{n^2+1}<\frac{\mathrm{log}\Big(\Big|\frac{b_j}{b_h}\Big|^\frac{1}{j-h}\cdot\Big|\frac{b_h}{b_k}\Big|^\frac{1}{k-h}\Big)}{-\mathrm{log}\rho}<\frac{1}{n^2+1}
		\end{equation}
		for all triples of integers $(h,j,k)$ with $0\leq h<j\leq k\leq n$ and all $\rho\in(0,c_2]$. Note that the numerator of the middle term is just a constant determined by $l$.
		
		For the same reason, we can find a constant $0<c_3<1$ such that
		\begin{equation}
			-\frac{1}{n^2+1}<\frac{\mathrm{log}\Big(\Big|\frac{b_{k'}}{b_{j'}}\Big|^\frac{1}{k'-j'}\cdot\Big|\frac{b_{h'}}{b_{j'}}\Big|^\frac{1}{h'-j'}\Big)}{-\mathrm{log}\rho}<\frac{1}{n^2+1}
		\end{equation}
		for all triples of integers $(h',j',k')$ with $0\leq h'<j'<k'\leq n$ and all $\rho\in(0,c_3]$.
		
		Finally, we can find a constant $0<c_4<1$ such that
		\begin{equation}
			-\frac{1}{n+1}<\frac{\mathrm{log}\Big|\frac{b_{k''}}{b_{j''}}\Big|^\frac{1}{k''-j''}}{-\mathrm{log}\rho}<\frac{1}{n+1}
		\end{equation}
		for all pairs of integers $(j'',k'')$ with $0\leq j''<k''\leq n$ and all $\rho\in(0,c_4]$.
		
		Take $c=\min\{c_1,c_2,c_3,c_4\}$.
	\end{con}
	
	Recall that in Definition 4.6 (4), effective slopes of $\mathrm{NP}_\rho(\ell)$ are slopes $<\mathrm{log}\rho$.
	
	\begin{prop}
		Let $c>0$ be the constant in Construction 5.5 and $-n=m_0<m_1\cdots<m_k=0$ be all the breaks of $\mathrm{NP}_c(\ell)$. Let $t$ be the largest integer such that the slope of $\mathrm{NP}_c(\ell)$ on the interval $[m_t,m_{t+1}]$ is effective. Then for any integer $s\in\{0,1,\cdots,t\}$, we have
		\[\beta_{m_s-m_0+1}=\beta_{m_s-m_0+2}=\cdots=\beta_{m_{s+1}-m_0}=-1-\frac{\mathrm{ord}_x(a_{-m_{s+1}})-\mathrm{ord}_x(a_{-m_s})}{m_{s+1}-m_s}.\]
	\end{prop}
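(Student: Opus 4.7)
The plan is to identify, on the effective range, the breaks and slopes of $\mathrm{NP}_c(\ell)$ with those of $\mathrm{FNP}(\ell)$, and then to invoke Lemma 3.4. Setting $\mu := -\log c > 0$, formula (5.5.1) gives $v_c(a_i) = -\log|b_i| + \mathrm{ord}_x(a_i)\cdot\mu$, so that the slope of $\mathrm{NP}_c$ on $[m_s, m_{s+1}]$ takes the form $A_s + L_s\mu$ with
\[
A_s = \frac{\log|b_{-m_s}/b_{-m_{s+1}}|}{m_{s+1}-m_s}, \qquad L_s = \frac{\mathrm{ord}_x(a_{-m_{s+1}}) - \mathrm{ord}_x(a_{-m_s})}{m_{s+1}-m_s}.
\]
The target formula $\beta = -1 - L_s$ is precisely what Lemma 3.4 produces if $L_s$ is the slope of $\mathrm{FNP}$ on $[m_s, m_{s+1}]$ and $L_s \leq -1$. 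So the proof reduces to verifying, for each $s = 0, \ldots, t$, that the line segment from $(m_s, \mathrm{ord}_x(a_{-m_s}))$ to $(m_{s+1}, \mathrm{ord}_x(a_{-m_{s+1}}))$ is a segment of $\mathrm{FNP}(\ell)$ and that $L_s \leq -1$.

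The effectivity bound is immediate: from $A_s + L_s\mu < -\mu$ and $|A_s|/\mu < 1/(n+1)$ (by (5.5.4)), one gets $L_s < -1 + 1/(n+1)$, and since $L_s$ is rational with denominator at most $m_{s+1}-m_s \leq n$, integrality forces $L_s \leq -1$. For the crucial inequality $\mathrm{ord}_x(a_i) \geq \mathrm{ord}_x(a_{-m_s}) + L_s(-i-m_s)$ at interior indices $-m_{s+1}<i<-m_s$, I would start from the lower-convex-hull inequality $v_i \geq v_{-m_s} + (A_s + L_s\mu)(-i-m_s)$ of $\mathrm{NP}_c$, substitute (5.5.1), and divide by $\mu$. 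Writing $M = m_{s+1}-m_s$ and $\delta = -i-m_s$, an algebraic rearrangement expresses the resulting error as $(M-\delta)(Q - A_s)$, where $Q = \log(|b_i|/|b_{-m_{s+1}}|)/(M-\delta)$. Applying (5.5.2) to the triple $(-m_{s+1}, i, -m_s)$ gives $|Q - A_s|/\mu < 1/(n^2+1)$, so the error divided by $\mu$ lies in the interval $(-(M-1)/(n^2+1),\,(M-1)/(n^2+1))$, whose half-width is strictly less than $1/M$ since $M(M-1) \leq n(n-1) < n^2 + 1$. Since the left-hand side of the desired inequality is rational with denominator dividing $M$, integrality forces it to be nonnegative.

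The main obstacle is to upgrade this interior estimate into the global statement that the concatenation of these segments for $s = 0, \ldots, t$ is genuinely the initial portion of $\mathrm{FNP}(\ell)$. Two further points need attention: the $L_s$ must be weakly increasing so that the broken line is convex, and points $(-j, \mathrm{ord}_x(a_j))$ with $j < -m_{t+1}$ must lie on or above the relevant extension of this broken line. For weak monotonicity, I would combine the strict slope increase of $\mathrm{NP}_c$ at each interior break with the cancellation $|A_s + A_{s+1}|/\mu < 1/(n^2+1)$ obtained by applying (5.5.3) to $(-m_{s+2}, -m_{s+1}, -m_s)$, together with integrality of $L_s - L_{s+1}$, via an analysis mirroring the one above. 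For exterior points, the convexity of $\mathrm{NP}_c$ itself gives the analogue of the starting inequality with $\delta$ negative or greater than $M$, and the same integrality cleanup descends it to $\mathrm{FNP}$. Once the broken line is identified with the effective part of $\mathrm{FNP}(\ell)$, Lemma 3.4 converts each $L_s$ (for $s = 0, \ldots, t$) with multiplicity $m_{s+1}-m_s$ into a block of formal slopes equal to $-1 - L_s$ with the same multiplicity, which, arranged in decreasing order, gives precisely the indexing asserted for $\beta_{m_s-m_0+1}, \ldots, \beta_{m_{s+1}-m_0}$.
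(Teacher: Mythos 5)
Your proposal follows essentially the same route as the paper: transfer the break/slope structure of $\mathrm{NP}_c(\ell)$ to $\mathrm{FNP}(\ell)$ by showing that the $\log|b_i|$ corrections are, after division by $\mu=-\log c$, too small (relative to the discreteness of rational slopes with denominator $\leq n$) to disturb any of the relevant inequalities, and then invoke Lemma 3.4. The paper packages this as Lemmas 5.7--5.9 (stability of breaks, effectivity, and containment of the $\mathrm{FNP}$ breaks among the $\mathrm{NP}_c$ breaks), whereas you directly verify that the broken line through the points $(m_s,\mathrm{ord}_x(a_{-m_s}))$ is the lower convex hull; the content is the same, and your interior-point and effectivity estimates are correct.

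One bookkeeping point deserves care: your reorganization makes the weak monotonicity $L_s\leq L_{s+1}$ an explicit obligation (the paper gets it for free from convexity of $\mathrm{FNP}(\ell)$ once Lemma 5.9 is in place), and the quantity you must control there is $A_s-A_{s+1}$, not $A_s+A_{s+1}$: the strict slope increase of $\mathrm{NP}_c$ at $m_{s+1}$ reads $(L_{s+1}-L_s)\mu> A_s-A_{s+1}$, so you need $(A_s-A_{s+1})/\mu>-1/(n^2+1)$ before integrality can force $L_{s+1}\geq L_s$. As literally written, (5.5.3) applied to $(-m_{s+2},-m_{s+1},-m_s)$ bounds the sum $A_s+A_{s+1}$ (note the paper's own junk term $\theta_2$ carries the opposite sign on the second exponent from the displayed form of (5.5.3), so this ambiguity originates in the source). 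This is harmless --- Construction 5.5 can bound any finite list of constants by $\mu/(n^2+1)$ for $\rho$ small, and the intended estimate clearly covers $A_s-A_{s+1}$ --- but the quantity should be corrected in a written-up version.
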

	
	In order to prove Proposition 5.6, we need to establish several lemmas first.
	
	\begin{lemma}
		Let $c>0$ be the constant in Construction 5.5. Let $-n=m_0<m_1\cdots<m_k=0$ denote all the breaks of $\mathrm{NP}_c(\ell)$. Then for any $\rho\in(0,c]$, all the breaks of $\mathrm{NP}_\rho(\ell)$ are $-n=m_0<m_1\cdots<m_k=0$ as well.
	\end{lemma}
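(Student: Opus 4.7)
My plan is to isolate all the $\rho$-dependence in $\mathrm{NP}_\rho(\ell)$ into a single real parameter and then invoke (5.5.2) to show that varying this parameter cannot change the combinatorial type of the Newton polygon. Set $L = -\log\rho$, $u_i = -\log|b_i|$ and $\alpha_i = \mathrm{ord}_x(a_i)$. Formula (5.5.1) gives $v_\rho(a_i) = u_i + \alpha_i L$, so as $\rho$ ranges over $(0,c]$, $L$ ranges over $[L_0,\infty)$ with $L_0 := -\log c > 0$, and each of the finitely many points $(-i, v_\rho(a_i))$ moves affinely along a vertical line.

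For every triple $0 \le h < j < k \le n$, the position of $(-j, v_\rho(a_j))$ relative to the chord from $(-h, v_\rho(a_h))$ to $(-k, v_\rho(a_k))$ is governed by the sign of the affine function
\[
f_{h,j,k}(L) \;=\; A_{h,j,k} + B_{h,j,k}\, L,
\]
where $A_{h,j,k} = \tfrac{k-j}{k-h} u_h + \tfrac{j-h}{k-h} u_k - u_j$ and $B_{h,j,k} = \tfrac{k-j}{k-h} \alpha_h + \tfrac{j-h}{k-h} \alpha_k - \alpha_j$. Since the $x$-coordinates $-n,-n+1,\ldots,0$ are pairwise distinct, the combinatorial structure of the lower convex hull of $\{(-i, v_\rho(a_i))\}_{i=0}^{n}$ — hence the break set of $\mathrm{NP}_\rho(\ell)$ — is completely determined by the family of signs $\{\mathrm{sign}\, f_{h,j,k}(L)\}_{0 \le h<j<k \le n}$. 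The lemma therefore reduces to showing that each such sign is constant on $[L_0,\infty)$.

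A direct rearrangement yields $A_{h,j,k}/(j-h) = \log\bigl(|b_j/b_h|^{1/(j-h)} \cdot |b_h/b_k|^{1/(k-h)}\bigr)$, so (5.5.2) applied at $\rho = c$ gives $|A_{h,j,k}| < (j-h)L_0/(n^2+1) \le nL_0/(n^2+1)$. On the other hand, $(k-h) B_{h,j,k} \in \mathbb{Z}$ with $k-h \le n$, so either $B_{h,j,k} = 0$ or $|B_{h,j,k}| \ge 1/n$. In the second case,
\[
\Bigl|\frac{A_{h,j,k}}{B_{h,j,k}}\Bigr| \;<\; \frac{n^2}{n^2+1}\, L_0 \;<\; L_0,
\]
so the unique root $-A_{h,j,k}/B_{h,j,k}$ of $f_{h,j,k}$ lies in $(-L_0,L_0)$ and $f_{h,j,k}$ is of constant sign $\mathrm{sign}(B_{h,j,k})$ on $[L_0,\infty)$; when $B_{h,j,k} = 0$, the function is the constant $A_{h,j,k}$ and is trivially of constant sign.

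The main obstacle I anticipate is formalising the combinatorial claim that the break set of $\mathrm{NP}_\rho(\ell)$ is determined by the collection of signs $\mathrm{sign}\, f_{h,j,k}(L)$. The cleanest route is to observe that, as $L$ varies continuously, the combinatorial type of the lower convex hull of points with fixed, pairwise distinct $x$-coordinates can only change at values of $L$ where some triple becomes collinear, i.e.\ where some $f_{h,j,k}$ vanishes; the estimate above forbids such events on $[L_0,\infty)$ (triples with $A_{h,j,k} = B_{h,j,k} = 0$ are collinear for every $L$ and contribute no transition). Hence the break set is constant on $(0,c]$, and in particular equal to its value at $\rho = c$.
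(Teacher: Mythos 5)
Your proposal is correct, and it takes a genuinely different route from the paper. The paper argues by induction on the breaks of $\mathrm{NP}_c(\ell)$: assuming $m_s$ is a break for every $\rho\in(0,c]$, it characterizes ``$m_{s+1}$ is the next break'' by the two families of inequalities (5.7.1)--(5.7.2), each of the form (rational slope difference) $\geq$ (junk term)$/(-\log\rho)$, and propagates them from $\rho=c$ to all $\rho\in(0,c]$ by first forcing the slope difference to be $\geq 0$ (integrality with denominators at most $n^2$ against the $1/(n^2+1)$ bounds of (5.5.2)--(5.5.3)) and then splitting on the sign of the junk term. You instead linearize in $L=-\log\rho$: each point of the configuration moves affinely, the position of a point relative to each straddling chord is an affine function $A_{h,j,k}+B_{h,j,k}L$, and the break set is determined by the truth values of the predicates $A_{h,j,k}+B_{h,j,k}L>0$. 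The same two ingredients --- $(k-h)B_{h,j,k}\in\mathbb{Z}$ so $B_{h,j,k}=0$ or $|B_{h,j,k}|\geq 1/n$, and $|A_{h,j,k}|<nL_0/(n^2+1)$ from (5.5.2) evaluated at $\rho=c$ --- place the unique root strictly below $L_0$, so every predicate is constant on $[L_0,\infty)$. This eliminates the induction and the case analysis, and uses only (5.5.2) (your identification $A_{h,j,k}=(j-h)\log\bigl(|b_j/b_h|^{1/(j-h)}|b_h/b_k|^{1/(k-h)}\bigr)$ checks out). The one step worth spelling out fully is the combinatorial reduction you flag yourself: for $0<j<n$, the integer $-j$ is a break if and only if $(-j,v_\rho(a_j))$ lies strictly below every chord joining two points on opposite sides, i.e.\ $f_{h,j,k}(L)>0$ for all $h<j<k$ (if some $f_{h,j,k}(L)=0$ and that chord meets the hull at $x=-j$, a concavity argument shows the hull is affine across $-j$, so $-j$ is not a break). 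With that stated, your sign-constancy argument is a complete proof.
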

	
	\begin{proof}
		First we claim that, if for some integer $s\in\{0,1,\cdots,k-1\}$, $m_s$ is a break of $\mathrm{NP}_\rho(\ell)$ for all $\rho\in(0,c]$, then the smallest break of $\mathrm{NP}_\rho(\ell)$ strictly larger than $m_s$ is $m_{s+1}$ for all $\rho\in(0,c]$.
		
		Recall that in Definition 4.6 (2), $m_0=-n$ is a break of $\mathrm{NP}_\rho(\ell)$ for all $\rho\in(0,c]$. Therefore, by induction and the above claim, the proof is complete.
		
		We now prove the claim. For any $\rho\in(0,c]$, $m_s$ is already a break of $\mathrm{NP}_\rho(\ell)$ by our hypothesis in the claim. Then $m_{s+1}$ is the smallest break of $\mathrm{NP}_\rho(\ell)$ strictly larger than $m_s$ if and only if the following two conditions hold (see Figure 3).
		
		\[\frac{v_\rho(a_{-j})-v_\rho(a_{-m_{s+1}})}{j-m_{s+1}}\leq\frac{v_\rho(a_{-m_s})-v_\rho(a_{-m_{s+1}})}{m_s-m_{s+1}},\quad\forall~m_s\leq j<m_{s+1}.\]
		\[\frac{v_\rho(a_{-j})-v_\rho(a_{-m_{s+1}})}{j-m_{s+1}}>\frac{v_\rho(a_{-m_s})-v_\rho(a_{-m_{s+1}})}{m_s-m_{s+1}},\quad\forall~m_{s+1}<j\leq0.\]
		
		\begin{center}
			\begin{figure}[htbp]
				\begin{tikzpicture}
					\draw[->] (-4,0) -- (0.5,0) node[right] {$x$};
					\draw[->] (0,-2.5) -- (0,0.5) node[above] {$y$};
					\node[below left] at (0,0) {$0$};
					\node[above] at (-3.3,0) {$m_s$};
					\node[above] at (-1.5,0) {$m_{s+1}$};
					\filldraw[black] (-3.3,-0.8) circle (1pt); 
					\filldraw[black] (-2.5,-1) circle (1pt); 
					\filldraw[black] (-1.5,-2) circle (1pt); 
					\filldraw[black] (-0.9,-1.8) circle (1pt);
					\draw[black] (-3.5,-0.2) -- (-3.3,-0.8) -- (-1.5,-2) -- (-0.5,-2.2);
					\draw[loosely dashed] (-3.3,0) -- (-3.3,-0.8);
					\draw[loosely dashed] (-1.5,0) -- (-1.5,-2);
					\draw[loosely dashed] (-1.5,-2) -- (-2.5,-1);
					\draw[loosely dashed] (-1.5,-2) -- (-0.9,-1.8);
				\end{tikzpicture}
				\caption{}
			\end{figure}
		\end{center}
		
		Apply (5.5.1) to compute $v_\rho(a_{-j}),v_\rho(a_{-m_s}),v_\rho(a_{-m_{s+1}})$, it is equivalent to saying that the following two conditions hold.
		\begin{equation}
			\frac{\mathrm{ord}_x(a_{-m_{s+1}})-\mathrm{ord}_x(a_{-m_s})}{m_{s+1}-m_s}-\frac{\mathrm{ord}_x(a_{-m_{s+1}})-\mathrm{ord}_x(a_{-j})}{m_{s+1}-j}\geq\frac{\theta_1(-m_{s+1},-j,-m_s)}{-\mathrm{log}\rho},\quad\forall~m_s\leq j<m_{s+1}.
		\end{equation}
		\begin{equation}
			\frac{\mathrm{ord}_x(a_{-j})-\mathrm{ord}_x(a_{-m_{s+1}})}{j-m_{s+1}}-\frac{\mathrm{ord}_x(a_{-m_{s+1}})-\mathrm{ord}_x(a_{-m_s})}{m_{s+1}-m_s}>\frac{\theta_2(-j,-m_{s+1},-m_s)}{-\mathrm{log}\rho},\quad\forall~m_{s+1}<j\leq0.
		\end{equation}
		The junk term \[\theta_1(-m_{s+1},-j,-m_s)=\mathrm{log}\Big(\Big|\frac{b_{-j}}{b_{-m_{s+1}}}\Big|^\frac{1}{m_{s+1}-j}\cdot\Big|\frac{b_{-m_{s+1}}}{b_{-m_s}}\Big|^\frac{1}{m_{s+1}-m_s}\Big),\]
		\[\theta_2(-j,-m_{s+1},-m_s)=\mathrm{log}\Big(\Big|\frac{b_{-m_s}}{b_{-m_{s+1}}}\Big|^\frac{1}{m_{s+1}-m_s}\cdot\Big|\frac{b_{-j}}{b_{-m_{s+1}}}\Big|^\frac{1}{j-m_{s+1}}\Big).\]
		
		To prove the claim, it suffices to show that (5.7.1) and (5.7.2) hold for all $\rho\in(0,c]$.
		
		Note that (5.7.1) and (5.7.2) automatically hold for $\rho=c$ since by our initial assumption, $m_{s+1}$ is the smallest break of $\mathrm{NP}_c(\ell)$ strictly larger than $m_s$.
		
		For any $m_s\leq j<m_{s+1}$, let $A_s$ denote the left-hand side of (5.7.1). If $A_s<0$, then its absolute value $|A_s|_\mathbb{R}>\frac{1}{n^2}$ since the numerators and denominators are both integers, and the absolute values of the denominators are at most $n$. However, by (5.5.2) we have $\frac{\theta_1(-m_{s+1},-j,-m_s)}{-\mathrm{log}c}>-\frac{1}{n^2+1}$, which yields
		\[A_s<-\frac{1}{n^2}<-\frac{1}{n^2+1}<\frac{\theta_1(-m_{s+1},-j,-m_s)}{-\mathrm{log}c}.\]
	    This contradicts (5.7.1) when taking $\rho=c$. Therefore, $A_s\geq0$.
		
		If $\theta_1(-m_{s+1},-j,-m_s)<0$, then $A_s\geq0>\frac{\theta_1(-m_{s+1},-j,-m_s)}{-\mathrm{log}\rho}$ for all $\rho\in(0,c]$.
		
		If $\theta_1(-m_{s+1},-j,-m_s)\geq0$, then $A_s\geq\frac{\theta_1(-m_{s+1},-j,-m_s)}{-\mathrm{log}c}\geq\frac{\theta_1(-m_{s+1},-j,-m_s)}{-\mathrm{log}\rho}$ for all $\rho\in(0,c]$, where the first inequality comes from the fact that (5.7.1) holds for $\rho=c$.
		
		We can therefore conclude that (5.7.1) holds for all $\rho\in(0,c]$.
		
		For any $m_{s+1}<j\leq0$, let $B_s$ denote the left-hand side of (5.7.2). Then $B_s\geq0$ for the same reason as above by applying (5.5.3).
		
		If $\theta_2(-j,-m_{s+1},-m_s)<0$, then $B_s\geq0>\frac{\theta_2(-j,-m_{s+1},-m_s)}{-\mathrm{log}\rho}$ for all $\rho\in(0,c]$.
		
		If $\theta_2(-j,-m_{s+1},-m_s)\geq0$, then $B_s>\frac{\theta_2(-j,-m_{s+1},-m_s)}{-\mathrm{log}c}\geq\frac{\theta_2(-j,-m_{s+1},-m_s)}{-\mathrm{log}\rho}$ for all $\rho\in(0,c]$, where the first inequality comes from the fact that (5.7.2) holds for $\rho=c$.
		
		We can therefore conclude that (5.7.2) holds for all $\rho\in(0,c]$. The proof is now complete.
	\end{proof}
	
	\begin{lemma}
		Let $c>0$ be the constant in Construction 5.5. Let $-n=m_0<m_1<\cdots<m_k=0$ denote all the breaks of $\mathrm{NP}_c(\ell)$. For any integer $s\in\{0,1,\cdots,k-1\}$, if the slope of $\mathrm{NP}_c(\ell)$ on the interval $[m_s,m_{s+1}]$ is effective, then
		
		(1) The following inequality holds. 
		\[-1-\frac{\mathrm{ord}_x(a_{-m_{s+1}})-\mathrm{ord}_x(a_{-m_s})}{m_{s+1}-m_s}\geq0.\]
		
		(2) The slope of $\mathrm{NP}_\rho(\ell)$ on the interval $[m_s,m_{s+1}]$ is effective for all $\rho\in(0,c]$.
	\end{lemma}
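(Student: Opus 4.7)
The plan is to express the slope of $\mathrm{NP}_\rho(\ell)$ on $[m_s,m_{s+1}]$ as an explicit affine function of $\mathrm{log}\rho$, and then combine the estimates baked into Construction 5.5 with rationality constraints to pin down signs. By Lemma 5.7 the breaks of $\mathrm{NP}_\rho(\ell)$ coincide with $m_0<\cdots<m_k$ for every $\rho\in(0,c]$, so $[m_s,m_{s+1}]$ really is a single edge of $\mathrm{NP}_\rho(\ell)$. By (5.5.1) we have $v_\rho(a_i)=-\mathrm{log}|b_i|-\mathrm{ord}_x(a_i)\mathrm{log}\rho$, so setting $L=m_{s+1}-m_s$, $\lambda=(\mathrm{ord}_x(a_{-m_{s+1}})-\mathrm{ord}_x(a_{-m_s}))/L$, and $\mu=\mathrm{log}|b_{-m_s}/b_{-m_{s+1}}|^{1/L}$, a direct computation shows that the slope of $\mathrm{NP}_\rho(\ell)$ on $[m_s,m_{s+1}]$ equals $\mu-\lambda\mathrm{log}\rho$, and effectivity at $\rho$ translates to $\mu<(1+\lambda)\mathrm{log}\rho$. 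Two rigidity facts then drive everything: (i) since the denominator of $\lambda$ divides $L\leq n$, whenever $1+\lambda\neq 0$ one has $|1+\lambda|\geq 1/n$; and (ii) applying (5.5.4) with $(j'',k'')=(-m_{s+1},-m_s)$ gives $|\mu|<(-\mathrm{log}\rho)/(n+1)$ for every $\rho\in(0,c]$.

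For part (1), I would argue by contradiction. Suppose $1+\lambda>0$; then (i) forces $1+\lambda\geq 1/n$, hence $(1+\lambda)\mathrm{log}c\leq(\mathrm{log}c)/n$ (recall $\mathrm{log}c<0$). On the other hand (ii) at $\rho=c$ gives $\mu>(\mathrm{log}c)/(n+1)>(\mathrm{log}c)/n$, so the assumed effectivity at $c$ yields $(\mathrm{log}c)/n<\mu<(1+\lambda)\mathrm{log}c\leq(\mathrm{log}c)/n$, a contradiction. Hence $1+\lambda\leq 0$, which is exactly the inequality asserted in (1).

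For part (2), I would use (1) and split into two subcases. If $1+\lambda<0$, then $-(1+\lambda)\geq 1/n$ by (i), so $(1+\lambda)\mathrm{log}\rho=(-(1+\lambda))(-\mathrm{log}\rho)\geq(-\mathrm{log}\rho)/n$, while (ii) gives $\mu<(-\mathrm{log}\rho)/(n+1)<(-\mathrm{log}\rho)/n\leq(1+\lambda)\mathrm{log}\rho$, so effectivity propagates automatically to every $\rho\in(0,c]$. If $1+\lambda=0$, the condition $\mu<(1+\lambda)\mathrm{log}\rho$ simplifies to $\mu<0$, which is independent of $\rho$ and transfers trivially from the hypothesis at $\rho=c$ to all $\rho\in(0,c]$. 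The main delicacy is just the sign bookkeeping (since $\mathrm{log}\rho<0$ throughout) together with isolating the degenerate case $1+\lambda=0$; once the slope is written as $\mu-\lambda\mathrm{log}\rho$, the interplay between the denominator lower bound $1/n$ and the Construction 5.5 upper bound $1/(n+1)$, engineered precisely for this purpose, does all the essential work.
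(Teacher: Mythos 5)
Your proof is correct and follows essentially the same route as the paper: reduce effectivity at $\rho$ to the inequality $\mu<(1+\lambda)\log\rho$ via (5.5.1), then play the rationality lower bound $|1+\lambda|\geq 1/n$ against the Construction 5.5 upper bound $|\mu|<(-\log\rho)/(n+1)$ from (5.5.4). The only cosmetic difference is that for part (2) you split on the sign of $1+\lambda$ while the paper splits on the sign of $\mu$; both case analyses close the argument in the same way.
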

	
	\begin{proof}
		Recall that in Definition 4.6 (4), the slope of $\mathrm{NP}_\rho(\ell)$ on the interval $[m_s,m_{s+1}]$ is effective if
		\[\frac{v_\rho(a_{-m_{s+1}})-v_\rho(a_{-m_s})}{m_{s+1}-m_s}<\mathrm{log}\rho.\]
		
		Apply (5.5.1) to compute $v_\rho(a_{-m_s}),v_\rho(a_{-m_{s+1}})$, it is equivalent to saying that the following inequality holds.
		\begin{equation}
			-1-\frac{\mathrm{ord}_x(a_{-m_{s+1}})-\mathrm{ord}_x(a_{-m_s})}{m_{s+1}-m_s}>\frac{\mathrm{log}\Big|\frac{b_{-m_s}}{b_{-m_{s+1}}}\Big|^\frac{1}{m_{s+1}-m_s}}{-\mathrm{log}\rho}.
		\end{equation}
		
		By our initial assumption, (5.8.1) automatically holds for $\rho=c$.
		
		Let $C_s$ denote the left-hand side of (5.8.1). If $C_s<0$, then its absolute value $|C_s|_\mathbb{R}\geq\frac{1}{n}$ since the numerators and denominators are both integers, and the absolute value of the denominator is at most $n$. However, by (5.5.4) we have $\frac{\mathrm{log}\Big|\frac{b_{-m_s}}{b_{-m_{s+1}}}\Big|^\frac{1}{m_{s+1}-m_s}}{-\mathrm{log}c}>-\frac{1}{n+1}$, which yields
		\[C_s<-\frac{1}{n}<-\frac{1}{n+1}<\frac{\mathrm{log}\Big|\frac{b_{-m_s}}{b_{-m_{s+1}}}\Big|^\frac{1}{m_{s+1}-m_s}}{-\mathrm{log}c}.\]
		This contradicts (5.8.1) when taking $\rho=c$. Therefore, $C_s\geq0$, and the statement (1) is proved.
		
		If $\mathrm{log}\Big|\frac{b_{-m_s}}{b_{-m_{s+1}}}\Big|^\frac{1}{m_{s+1}-m_s}<0$, then $C_s\geq0>\frac{\mathrm{log}\Big|\frac{b_{-m_s}}{b_{-m_{s+1}}}\Big|^\frac{1}{m_{s+1}-m_s}}{-\mathrm{log}\rho}$ for all $\rho\in(0,c]$.
		
		If $\mathrm{log}\Big|\frac{b_{-m_s}}{b_{-m_{s+1}}}\Big|^\frac{1}{m_{s+1}-m_s}\geq0$, then $C_s>\frac{\mathrm{log}\Big|\frac{b_{-m_s}}{b_{-m_{s+1}}}\Big|^\frac{1}{m_{s+1}-m_s}}{-\mathrm{log}c}\geq\frac{\mathrm{log}\Big|\frac{b_{-m_s}}{b_{-m_{s+1}}}\Big|^\frac{1}{m_{s+1}-m_s}}{-\mathrm{log}\rho}$ for all $\rho\in(0,c]$, where the first inequality comes from the fact that (5.8.1) holds for $\rho=c$.
		
		We can therefore conclude that (5.8.1) holds for all $\rho\in(0,c]$, and the statement (2) is proved.
	\end{proof}
	
	We are now ready to compare $\mathrm{NP}_\rho(\ell)$ with the formal Newton polygon $\mathrm{FNP}(\ell)$.
	
	\begin{lemma}
		Let $c>0$ be the constant in Construction 5.5. Let $-n=m_0<m_1\cdots<m_k=0$ denote all the breaks of $\mathrm{NP}_c(\ell)$.
		
		(1) The set of breaks of $\mathrm{FNP}(\ell)$ is contained in $\{m_0,m_1,\cdots,m_k\}$.
		
		(2) For any integer $s\in\{0,1,\cdots,k\}$, if $m_s$ is not a break of $\mathrm{FNP}(\ell)$, then $\mathrm{FNP}(\ell)$ passes through the point $(m_s,\mathrm{ord}_x(a_{-m_s}))\in\mathbb{R}^2$.
	\end{lemma}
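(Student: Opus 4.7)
The plan is to leverage Lemma~5.7 twice, via limit arguments as $\rho\to 0^+$, transferring properties of the $p$-adic Newton polygons $\mathrm{NP}_\rho(\ell)$ to the formal Newton polygon $\mathrm{FNP}(\ell)$. By (5.5.1), for $\rho\in(0,c]$ one has $v_\rho(a_j)=-\mathrm{log}|b_j|-\mathrm{ord}_x(a_j)\mathrm{log}\rho$, so a direct computation shows that the slope of $\mathrm{NP}_\rho(\ell)$ on the edge $[m_s,m_{s+1}]$ is the affine function of $-\mathrm{log}\rho$
\[
N_s^{(\rho)}=F_s\cdot(-\mathrm{log}\rho)+P_s',\quad F_s:=\frac{\mathrm{ord}_x(a_{-m_{s+1}})-\mathrm{ord}_x(a_{-m_s})}{m_{s+1}-m_s},\quad P_s':=\frac{\mathrm{log}(|b_{-m_s}|/|b_{-m_{s+1}}|)}{m_{s+1}-m_s}.
\]
Note that $F_s$ is exactly the candidate $s$-th slope of $\mathrm{FNP}(\ell)$.

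The first key step is to show the candidate slopes $F_s$ are non-decreasing. By Lemma~5.7, $N_s^{(\rho)}<N_{s+1}^{(\rho)}$ for every $\rho\in(0,c]$; equivalently, $(F_{s+1}-F_s)(-\mathrm{log}\rho)>P_s'-P_{s+1}'$. If $F_{s+1}<F_s$ the left side would tend to $-\infty$ as $\rho\to 0^+$ while the right side is a fixed constant, a contradiction; hence $F_0\le F_1\le\cdots\le F_{k-1}$.

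The second key step is to compare individual data points to the candidate edge lines. For every $\rho\in(0,c]$, Lemma~5.7 guarantees that $[m_s,m_{s+1}]$ is an edge of $\mathrm{NP}_\rho(\ell)$, so by convexity this edge extends to a global supporting line of the whole polygon and $v_\rho(a_i)\ge v_\rho(a_{-m_s})+N_s^{(\rho)}(-i-m_s)$ for every $i\in\{0,\ldots,n\}$. Substituting the explicit formulas for $v_\rho$ and $N_s^{(\rho)}$ and collecting $\mathrm{log}\rho$-terms on the left, this takes the form $(\mathrm{log}\rho)A_i\ge -R_i$ with $A_i:=\mathrm{ord}_x(a_{-m_s})-\mathrm{ord}_x(a_i)+F_s(-i-m_s)$ and $R_i$ a $\rho$-independent constant. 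Dividing by $-\mathrm{log}\rho>0$ yields $A_i\le R_i/(-\mathrm{log}\rho)$; letting $\rho\to 0^+$ sends the right side to zero and forces $\mathrm{ord}_x(a_i)\ge\mathrm{ord}_x(a_{-m_s})+F_s(-i-m_s)$.

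Finally, I would take $\gamma:[-n,0]\to\mathbb{R}$ to be the piecewise-linear curve joining $(m_s,\mathrm{ord}_x(a_{-m_s}))$ for $s=0,\ldots,k$. The first step shows $\gamma$ is convex, and the second (restricted to each sub-interval $-i\in[m_s,m_{s+1}]$) gives $\gamma(-i)\le\mathrm{ord}_x(a_i)$ for every $i$. Hence $\gamma$ is a convex function weakly below all defining data points of $\mathrm{FNP}(\ell)$, yielding $\gamma\le\mathrm{FNP}(\ell)$; but each $(m_s,\mathrm{ord}_x(a_{-m_s}))$ is itself a data point, so $\mathrm{FNP}(\ell)(m_s)\le\mathrm{ord}_x(a_{-m_s})=\gamma(m_s)$, forcing equality at every $m_s$. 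Convexity of $\mathrm{FNP}(\ell)$ with these matching boundary values on each $[m_s,m_{s+1}]$ then gives $\mathrm{FNP}(\ell)\le\gamma$ on that interval, and thus $\mathrm{FNP}(\ell)=\gamma$ on all of $[-n,0]$. Both parts of the lemma follow: the breaks of $\mathrm{FNP}(\ell)$ equal those of $\gamma$, which lie in $\{m_0,\ldots,m_k\}$, proving (1); and $\mathrm{FNP}(\ell)$ passes through every $(m_s,\mathrm{ord}_x(a_{-m_s}))$ by construction, proving (2). The main subtlety is noticing that a clean $\rho\to 0^+$ limit via Lemma~5.7 sidesteps the error-term estimates (5.5.2)--(5.5.4), which would be at the borderline of applicability; the limit succeeds precisely because the leading $(-\mathrm{log}\rho)$ behavior of $N_s^{(\rho)}$ recovers exactly the formal slope $F_s$.
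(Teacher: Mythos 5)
Your proposal is correct, and it takes a genuinely different route from the paper. The paper proves the lemma by induction on the breaks: it assumes a break $u_s$ of $\mathrm{FNP}(\ell)$ lies strictly between $m_s$ and $m_{s+1}$, and derives a contradiction by combining the convexity inequalities (5.9.1)--(5.9.9), where the crucial $\rho$-independent inequalities are extracted from the non-negativity of the quantities $A_s$, $B_s$ established inside the proof of Lemma~5.7. You instead build the auxiliary piecewise-linear curve $\gamma$ through the points $(m_s,\mathrm{ord}_x(a_{-m_s}))$ and run a global sandwich argument: the limit $\rho\to 0^+$ applied to the break structure of $\mathrm{NP}_\rho(\ell)$ (which Lemma~5.7 guarantees is constant on $(0,c]$) shows both that the slopes $F_s$ of $\gamma$ are non-decreasing and that $\gamma$ minorizes every data point $(-i,\mathrm{ord}_x(a_i))$, whence $\gamma\le\mathrm{FNP}(\ell)\le\gamma$ and the two polygons coincide. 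Your limit argument is essentially a repackaging of the same information the paper extracts from $A_s,B_s\ge 0$ (the leading $-\mathrm{log}\rho$ coefficient must be non-negative because the error term is bounded), but organizing the conclusion around $\gamma$ eliminates the induction and the case analysis of where the hypothetical extra break $u_s$ sits; it also yields the slightly stronger statement $\mathrm{FNP}(\ell)=\gamma$ directly, which is in fact what Proposition~5.6 needs. The one implicit assumption you share with the paper is that all $a_i$ are nonzero (as in Notation~5.3), so that every $\mathrm{ord}_x(a_i)$ and $v_\rho(a_i)$ is finite; this is harmless but worth a remark if any coefficient vanishes.
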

	
	\begin{proof}
		The statement (1) can be replaced by an equivalent statement as follows.
		
		\textit{(1') For any integer} $s\in\{0,1,\cdots,k-1\}$\textit{, there is no break} $j$ \textit{of} $\mathrm{FNP}(\ell)$ \textit{such that} $m_s<j<m_{s+1}$\textit{.}
		
		We will prove (1') and (2) by induction on $s$.
		
		\textit{Step 1.}
		
		First we show that (1') and (2) hold for $s=0$.
		
		Since $m_0=-n$ is a break of $\mathrm{FNP}(\ell)$, (2) automatically holds for $s=0$.
		
		Let $u_0$ be the smallest break of $\mathrm{FNP}(\ell)$ strictly larger than $m_0$. If $m_0<u_0<m_1$, then
		\begin{equation}
			\frac{\mathrm{ord}_x(a_{-m_1})-\mathrm{ord}_x(a_{-u_0})}{m_1-u_0}>\frac{\mathrm{ord}_x(a_{-u_0})-\mathrm{ord}_x(a_{-m_0})}{u_0-m_0}.
		\end{equation}
		
		Note that $m_1$ is the smallest break of $\mathrm{NP}_c(\ell)$ strictly larger than $m_0$. As shown in Lemma 5.7, the left-hand side of (5.7.1) is $\geq 0$. Replace $m_s,j,m_{s+1}$ in (5.7.1) with $m_0,u_0,m_1$ respectively,
		\begin{equation}
			\frac{\mathrm{ord}_x(a_{-m_1})-\mathrm{ord}_x(a_{-m_0})}{m_1-m_0}\geq\frac{\mathrm{ord}_x(a_{-m_1})-\mathrm{ord}_x(a_{-u_0})}{m_1-u_0}.
		\end{equation}
		
		Let $G_0$ (resp. $H_0$) denote the left-hand (resp. right-hand) side of (5.9.2). We have
		\[\frac{\mathrm{ord}_x(a_{-u_0})-\mathrm{ord}_x(a_{-m_0})}{u_0-m_0}-H_0=\frac{(m_1-m_0)G_0-(m_1-u_0)H_0}{u_0-m_0}-H_0=\frac{(m_1-m_0)(G_0-H_0)}{u_0-m_0}\geq0.\]
		This contradicts (5.9.1), so $u_0\geq m_1$ and (1') holds for $s=0$.
		
		\textit{Step 2.}
		
		Suppose that (1') and (2) hold up to $s-1$. We will then show that (2) also holds for $s$. 
		
		If $m_s$ is a break of $\mathrm{FNP}(\ell)$, then (2) automatically holds for $s$.
		
		Otherwise let $u_s>m_s$ be the smallest break of $\mathrm{FNP}(\ell)$ strictly larger than $m_s$.
		
		By the induction hypothesis, we can find an integer $t\leq s-1$ such that $m_t$ is the largest break of $\mathrm{FNP}(\ell)$ strictly smaller than $m_s$, and $\mathrm{FNP}(\ell)$ passes through the point $(m_{s-1},\mathrm{ord}_x(a_{-m_{s-1}}))\in\mathbb{R}^2$ (see Figure 4).
		
		\begin{center}
			\begin{figure}[htbp]
				\begin{tikzpicture}
					\draw[->] (-4,0) -- (0.5,0) node[right] {$x$};
					\draw[->] (0,-2.5) -- (0,0.5) node[above] {$y$};
					\node[below left] at (0,0) {$0$};
					\node[above] at (-3.3,0) {$m_t$};
					\node[above] at (-2.5,0) {$m_{s-1}$};
					\node[above] at (-1.7,0) {$m_s$};
					\node[above] at (-1.3,0) {$u_s$};
					\filldraw[black] (-3.3,-0.6) circle (1pt); 
					\filldraw[black] (-2.5,-1.2) circle (1pt); 
					\filldraw[black] (-1.7,-1.8) circle (1pt); 
					\filldraw[black] (-1.3,-2.1) circle (1pt);
					\draw[black] (-3.5,-0.2) -- (-3.3,-0.6) -- (-1.3,-2.1) -- (-0.5,-2.2);
					\draw[loosely dashed] (-3.3,0) -- (-3.3,-0.6);
					\draw[loosely dashed] (-2.5,0) -- (-2.5,-1.2);
					\draw[loosely dashed] (-1.7,0) -- (-1.7,-1.8);
					\draw[loosely dashed] (-1.3,0) -- (-1.3,-2.1);
				\end{tikzpicture}
				\caption{}
			\end{figure}
		\end{center}
		
		Note that $\mathrm{FNP}(\ell)$ on the interval $[m_t,u_s]$ is a segment with endpoints at $(m_t,\mathrm{ord}_x(a_{-m_t}))$ and $(u_s,\mathrm{ord}_x(a_{-u_s}))$. The segment passes through $(m_{s-1},\mathrm{ord}_x(a_{-m_{s-1}}))$, i.e.
		\begin{equation}
			\frac{\mathrm{ord}_x(a_{-u_s})-\mathrm{ord}_x(a_{-m_t})}{u_s-m_t}=\frac{\mathrm{ord}_x(a_{-u_s})-\mathrm{ord}_x(a_{-m_{s-1}})}{u_s-m_{s-1}}.
		\end{equation}
		
		To show that (2) is true for $s$, it suffices to prove that the segment with endpoints at $(m_{s-1},\mathrm{ord}_x(a_{-m_{s-1}}))$ and $(u_s,\mathrm{ord}_x(a_{-u_s}))$ passes through the point $(m_s,\mathrm{ord}_x(a_{-m_s}))$, i.e. the following equality holds.
		\begin{equation}
			\frac{\mathrm{ord}_x(a_{-u_s})-\mathrm{ord}_x(a_{-m_{s-1}})}{u_s-m_{s-1}}=\frac{\mathrm{ord}_x(a_{-u_s})-\mathrm{ord}_x(a_{-m_s})}{u_s-m_s}
		\end{equation}
		
		Since $m_t$ is a break of $\mathrm{FNP}(\ell)$ and $u_s$ is the smallest break of $\mathrm{FNP}(\ell)$ strictly larger than $m_t$,
		\begin{equation}
			\frac{\mathrm{ord}_x(a_{-u_s})-\mathrm{ord}_x(a_{-m_{s-1}})}{u_s-m_{s-1}}=\frac{\mathrm{ord}_x(a_{-u_s})-\mathrm{ord}_x(a_{-m_t})}{u_s-m_t}\geq\frac{\mathrm{ord}_x(a_{-u_s})-\mathrm{ord}_x(a_{-m_s})}{u_s-m_s},
		\end{equation}
		where the first equality comes from (5.9.3).
		
		Note that $m_s$ is the smallest break of $\mathrm{NP}_c(\ell)$ strictly larger than $m_{s-1}$. As shown in Lemma 5.7, the left-hand side of (5.7.2) is $\geq 0$. Replace $m_s,m_{s+1},j$ in (5.7.2) with $m_{s-1},m_s,u_s$ respectively,
		\begin{equation}
			\frac{\mathrm{ord}_x(a_{-u_s})-\mathrm{ord}_x(a_{-m_s})}{u_s-m_s}\geq\frac{\mathrm{ord}_x(a_{-m_s})-\mathrm{ord}_x(a_{-m_{s-1}})}{m_s-m_{s-1}}.
		\end{equation}
		
		Let $U_s$ (resp. $V_s$) be the left-hand (resp. right-hand) side of (5.9.6). We have
		\begin{equation}
			\frac{\mathrm{ord}_x(a_{-u_s})-\mathrm{ord}_x(a_{-m_{s-1}})}{u_s-m_{s-1}}-U_s=\frac{(u_s-m_s)U_s+(m_s-m_{s-1})V_s}{u_s-m_{s-1}}-U_s=\frac{(m_s-m_{s-1})(V_s-U_s)}{u_s-m_{s-1}}\leq0.
		\end{equation}
		
		When we combine (5.9.5) and (5.9.7), we see that (5.9.4) holds. Therefore, (2) is true for $s$.
		
		\textit{Step 3.}
		
		Suppose that (1') holds up to $s-1$ and (2) holds up to $s$. We will then show that (1') also holds for $s$.
		
		If $m_s$ is a break of $\mathrm{FNP}(\ell)$, we can repeat the process in Step 1 to justify (1') for $s$.
		
		Otherwise let $m_t$ and $u_s$ have the same meaning as in Step 2. Then $\mathrm{FNP}(\ell)$ on the interval $[m_t,u_s]$ is a segment with endpoints at $(m_t,\mathrm{ord}_x(a_{-m_t}))$ and $(u_s,\mathrm{ord}_x(a_{-u_s}))$, and the segment passes through the point $(m_s,\mathrm{ord}_x(a_{-m_s}))$.
		
		If $m_s<u_s<m_{s+1}$, apply the same technique as in Step 2 to $\mathrm{FNP}(\ell)$,
		\begin{equation}
			\frac{\mathrm{ord}_x(a_{-m_{s+1}})-\mathrm{ord}_x(a_{-u_s})}{m_{s+1}-u_s}>\frac{\mathrm{ord}_x(a_{-u_s})-\mathrm{ord}_x(a_{-m_t})}{u_s-m_t}=\frac{\mathrm{ord}_x(a_{-u_s})-\mathrm{ord}_x(a_{-m_s})}{u_s-m_s}.
		\end{equation}
		
		Apply the same technique as in Step 2 again to $\mathrm{NP}_c(\ell)$,
		\begin{equation}
			\frac{\mathrm{ord}_x(a_{-m_{s+1}})-\mathrm{ord}_x(a_{-m_s})}{m_{s+1}-m_s}\geq\frac{\mathrm{ord}_x(a_{-m_{s+1}})-\mathrm{ord}_x(a_{-u_s})}{m_{s+1}-u_s}.
		\end{equation}
		
		Let $G_s$ (resp. $H_s$) be the left-hand (resp. right-hand) side of (5.9.9). We have
		\[\frac{\mathrm{ord}_x(a_{-u_s})-\mathrm{ord}_x(a_{-m_s})}{u_s-m_s}-H_s=\frac{(m_{s+1}-m_s)G_s-(m_{s+1}-u_s)H_s}{u_s-m_s}-H_s=\frac{(m_{s+1}-m_s)(G_s-H_s)}{u_s-m_s}\geq0.\]
		This contradicts (5.9.8), so $u_s\geq m_{s+1}$ and (1') holds for $s$.
		
		We have finished the induction.
	\end{proof}
	
	We are now ready to prove Proposition 5.6.
	
	\begin{proof}
		Recall that in Definition 3.3 (3), the multi-set of slopes of $\mathrm{FNP}(\ell)$ is denoted by $\mathrm{FS}(\ell)$.
		
		According to Lemma 5.9, $\mathrm{FS}(\ell)$ can be reformulated as the disjoint union
		\[\mathrm{FS}(\ell)=\mathop{\bigsqcup}_{s=0}^{k-1}\Big\{\frac{\mathrm{ord}_x(a_{-m_{s+1}})-\mathrm{ord}_x(a_{-m_s})}{m_{s+1}-m_s} \ \mathrm{with} \ \mathrm{multiplicity} \ m_{s+1}-m_s\Big\},\]
		where
		\[\frac{\mathrm{ord}_x(a_{-m_1})-\mathrm{ord}_x(a_{-m_0})}{m_1-m_0}\leq\frac{\mathrm{ord}_x(a_{-m_2})-\mathrm{ord}_x(a_{-m_1})}{m_2-m_1}\leq\cdots\leq\frac{\mathrm{ord}_x(a_{-m_k})-\mathrm{ord}_x(a_{-m_{k-1}})}{m_k-m_{k-1}}.\]
		
		The proof is completed using Lemma 3.4 and Lemma 5.8 (1).
	\end{proof}
	
	\subsection{Final proof of Theorem 1.1}
	\begin{nota}
		According to Lemma 5.1, we can pick $\gamma\in(0,1)$ such that $\mathcal{A}_{\gamma,x}\otimes_{\mathcal{A}_x}M$ has a cyclic basis. We can then assume that $\mathcal{A}_{\gamma,x}\otimes_{\mathcal{A}_x}M\simeq\mathcal{A}_{\gamma,x}\{T\}/\mathcal{A}_{\gamma,x}\{T\}\ell$ for some monic twisted polynomial $\ell=\sum_{i=0}^{n}a_iT^i$, where $a_i\in\mathcal{A}_{\gamma,x}$ and $a_n=1$. Note that the formal slopes of $M$ can be computed as those of $\mathcal{A}_{\gamma,x}\{T\}/\mathcal{A}_{\gamma,x}\{T\}\ell$ since $\mathcal{A}_x\subseteq\mathcal{A}_{\gamma,x}\subseteq k((x))$. Write
		\[a_i=b_ix^{\mathrm{ord}_x(a_i)}+\mathrm{higher} \ \mathrm{degree} \ \mathrm{terms},\quad b_i\in k^\times,\quad i=0,1,\cdots,n.\]
		Let $c$ be the constant in Construction 5.5. Let $-n=m_0<m_1\cdots<m_k=0$ denote all the breaks of $\mathrm{NP}_c(\ell)$.	
	\end{nota}
	
	\begin{con}
		Suppose that the largest $p$-adic slope $\alpha_1>0$. We can find a constant $c'$ such that $\rho^{\alpha_1}<\omega=p^{-\frac{1}{p-1}}$ for all $\rho\in(0,c']$.
		
		Take $C_1=\min\{c,c'\}$.
	\end{con}
	
	\begin{lemma}
		Suppose that $\alpha_1>0$. Let $C_1$ be the constant in Construction 5.11.
		
		(1) The inequality (1.1.1) holds for all $1\leq i\leq m_1-m_0$.
		
		(2) When $0<j\leq m_1-m_0$, the subsidiary radii can be computed as follows.
		\[R_j(M,\rho)=\omega\rho^{1+\beta_j}\Big|\frac{1}{b_{-m_1}}\Big|^\frac{1}{m_1-m_0},\quad\forall~\rho\in(0,C_1].\]
	\end{lemma}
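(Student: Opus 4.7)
The plan is to establish part (2) by explicitly computing the smallest subsidiary radii of $M\otimes F_\rho$ via Proposition 4.7 applied to $\mathrm{NP}_\rho(\ell)$, and then to deduce part (1) from the convexity of $F_i(M,r)$ provided by Proposition 4.15.

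For part (2), the key step is to verify that the smallest slope $\lambda_1(c)$ of $\mathrm{NP}_c(\ell)$, which by Lemma 5.7 lies on the interval $[m_0,m_1]$, is effective, i.e.\ $\lambda_1(c)<\log c$. Baldassarri's theorem gives $\beta_1\geq\alpha_1>0$; combined with Lemma 5.9, this forces the slope of $\mathrm{FNP}(\ell)$ on $[m_0,m_1]$ to be $<-1$, so that the integer $A:=m_1-m_0+\mathrm{ord}_x(a_{-m_1})$ satisfies $|A|\geq 1$. Together with the quantitative bound (5.5.4) applied to $(j'',k'')=(-m_1,n)$ (noting $b_n=1$), a short calculation confirms $-\log|b_{-m_1}|<\tfrac{m_1-m_0}{n+1}(-\log c)<(-\log c)\leq |A|(-\log c)=A\log c$, which is the effectiveness condition. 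Lemma 5.8(2) then propagates effectiveness to every $\rho\in(0,c]$, and Proposition 5.6 with $s=0$ yields $\beta_1=\cdots=\beta_{m_1-m_0}=-1-\mathrm{ord}_x(a_{-m_1})/(m_1-m_0)$. Since $\lambda_1(\rho)$ is the smallest slope of $\mathrm{NP}_\rho(\ell)$ with multiplicity $m_1-m_0$, Proposition 4.7 identifies $R_1(M,\rho),\ldots,R_{m_1-m_0}(M,\rho)$ with $\omega\exp(\lambda_1(\rho))$, which by (5.5.1) simplifies to the asserted formula $\omega\rho^{1+\beta_j}|1/b_{-m_1}|^{1/(m_1-m_0)}$.

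For part (1), the formula of part (2) shows that $F_i(M,r)$ is affine on $[-\log C_1,\infty)$ with slope $i+\sum_{j=1}^i\beta_j$ for each $1\leq i\leq m_1-m_0$. On the other hand, (4.12.1) shows that $F_i(M,r)$ is affine near $r=0^+$ with slope $i+\sum_{j=1}^i\alpha_j$. The convexity of $F_i(M,r)$ on $(0,\infty)$ (Proposition 4.15) forces its slopes to be non-decreasing, which yields the desired inequality $\sum_{j=1}^i\alpha_j\leq\sum_{j=1}^i\beta_j$.

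The main obstacle is the effectiveness step for $\mathrm{NP}_c(\ell)$ on $[m_0,m_1]$, which is needed to invoke Proposition 5.6. It does not follow from Construction 5.5 alone, but requires combining the a priori positivity $\beta_1>0$ (via Baldassarri) with the integrality of $A$ and the quantitative estimate (5.5.4) on the magnitudes of the leading coefficients $b_i$.
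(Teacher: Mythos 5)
Your proposal is correct, and the overall skeleton (effectiveness of the first slope of $\mathrm{NP}_\rho(\ell)$ $\Rightarrow$ Lemma 5.7 + Proposition 4.7 + Proposition 5.6 $\Rightarrow$ the explicit formula for $R_j$ $\Rightarrow$ convexity of $F_i$ to compare the slopes at $r\to 0^+$ and $r\to\infty$) matches the paper. The genuine divergence is in how you certify that the slope of $\mathrm{NP}_{C_1}(\ell)$ on $[m_0,m_1]$ is effective. You import Baldassarri's inequality $\beta_1\geq\alpha_1>0$ as an external black box, translate it via Lemma 5.9 into the integrality statement $A=m_1-m_0+\mathrm{ord}_x(a_{-m_1})\leq -1$, and then beat the error term with (5.5.4); the computation checks out, since effectiveness at $\rho=c$ is literally $-\log|b_{-m_1}|<A\log c$ and your chain $-\log|b_{-m_1}|<\tfrac{m_1-m_0}{n+1}(-\log c)<-\log c\leq A\log c$ is valid. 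The paper instead gets effectiveness for free from solvability: convexity of $F_1$ plus $F_1(M,r)=(1+\alpha_1)r$ near $0$ give $R_1(M,\rho)\leq\rho^{1+\alpha_1}$, and the constant $c'$ in Construction 5.11 (which you do not use) is chosen precisely so that $\rho^{1+\alpha_1}<\omega\rho$, whence $R_1(M,C_1)<\omega C_1$ forces an effective slope by Proposition 4.7. The paper's route has two advantages worth noting: it keeps the argument self-contained, so that the $i=1$ case of Theorem 1.1 \emph{is} a new proof of Baldassarri's inequality rather than a consequence of it (your derivation of part (1) for $i=1$ is not circular, but it does reduce to citing \cite{Ba82}); and it is the base case of the mechanism reused in Lemma 5.14, where the product bound $\prod_j R_j(M,\rho)\leq\rho^{n+\sigma}$ plays the role of $R_1\leq\rho^{1+\alpha_1}$ and no analogue of Baldassarri's theorem is available. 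Your argument, by contrast, does not generalize to the inductive step, though it is perfectly adequate for this lemma.
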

	
	\begin{proof}
		Recall that by Proposition 4.12, there exists a small enough constant $\epsilon>0$ such that
		\begin{equation}
			0<C_1<1-\epsilon<1,\quad R_i(M,\rho)=\rho^{1+\alpha_i},\quad\forall~\rho\in(1-\epsilon,1) \ \mathrm{and} \ \forall~1\leq i\leq n.
		\end{equation}
		
		Since the function $F_1(M,r)=-\mathrm{log}R_1(M,\mathrm{exp}(-r))$ is continuous, piecewise affine and convex on the interval $r\in(0,\infty)$ by Proposition 4.15, when $\rho\in(0,C_1]$ we have
		\[R_1(M,\rho)\leq\rho^{1+\alpha_1}<\omega\rho,\]
		where the second inequality comes from Construction 5.11. By Lemma 5.7 and Proposition 4.7,
		\begin{equation}
			R_1(M,\rho)=\cdots=R_{m_1-m_0}(M,\rho)=\omega\mathrm{exp}(\frac{v_\rho(a_{-m_1})-v_\rho(a_{-m_0})}{m_1-m_0}),\quad\forall~\rho\in(0,C_1].
		\end{equation}
		
		Apply (5.5.1) to compute $v_\rho(a_{-m_1})$ and $v_\rho(a_{-m_0})$, and note that $b_{-m_0}=b_n=1$,
		\begin{equation}
			\frac{v_\rho(a_{-m_1})-v_\rho(a_{-m_0})}{m_1-m_0}=(-\mathrm{log}\rho)\cdot\frac{\mathrm{ord}_x(a_{-m_1})-\mathrm{ord}_x(a_{-m_0})}{m_1-m_0}+\mathrm{log}\Big|\frac{1}{b_{-m_1}}\Big|^\frac{1}{m_1-m_0}.
		\end{equation}
		
		The inequality $R_1(M,C_1)<\omega C_1$ forces the slope of $\mathrm{NP}_{C_1}(\ell)$ on the interval $[m_0,m_1]$ to be effective. Then by Proposition 5.6,
		\begin{equation}
			\beta_1=\beta_2=\cdots=\beta_{m_1-m_0}=-1-\frac{\mathrm{ord}_x(a_{-m_1})-\mathrm{ord}_x(a_{-m_0})}{m_1-m_0}.
		\end{equation}
		
		Combine (5.12.2), (5.12.3) and (5.12.4),
		\begin{equation}
			R_1(M,\rho)=\cdots=R_{m_1-m_0}(M,\rho)=\omega\rho^{1+\beta_1}\Big|\frac{1}{b_{-m_1}}\Big|^\frac{1}{m_1-m_0},\quad\forall~\rho\in(0,C_1].
		\end{equation}
		We have proved (2).
		
		Combining (5.12.1) and (5.12.5), we see that $F_1(M,r)$ is a straight line with a slope of $1+\alpha_1$ (resp. $1+\beta_1$) on the interval $(0,-\mathrm{log}(1-\epsilon))$ (resp. $[-\mathrm{log}C_1,\infty)$). The convexity of $F_1(M,r)$ on the interval $(0,\infty)$ then forces $1+\alpha_1\leq1+\beta_1$ (see Figure 1). Therefore, we have
		\[\beta_1=\cdots=\beta_{m_1-m_0}\geq\alpha_1\geq\cdots\geq\alpha_{m_1-m_0},\]
		and (1.1.1) holds for all $1\leq i\leq m_1-m_0$.
	\end{proof}
	
	Case-by-case proof of Theorem 1.1 is now ready to be presented.
	
	\begin{proof}
		First we may assume that $\alpha_1>0$. Otherwise $\alpha_1=\cdots=\alpha_n=0$ and (1.1.1) automatically holds, since formal slopes are non-negative.
		
		Next we may assume that $k\geq2$. Otherwise $k=1$ and $m_1-m_0=m_k-m_0=n$, in which case we can conclude the proof by Lemma 5.12 (1).
		
		Write $\sigma=\sum_{j=1}^{n}\alpha_j$. There are several cases.
		
		\textit{Case 1.} $\sigma\leq\sum_{j=1}^{m_1-m_0}\beta_j$.
		
		In this case, (1.1.1) holds for all $1\leq i\leq m_1-m_0$ by Lemma 5.12 (1). For any $m_1-m_0<i\leq n$, we have
		\[\sum_{j=1}^{i}\alpha_j\leq\sigma\leq\sum\limits_{j=1}^{m_1-m_0}\beta_j\leq\sum_{j=1}^{i}\beta_j.\]
		Therefore, (1.1.1) holds for all $1\leq i\leq n$ and the proof is complete.
		
		\textit{Case 2.} $\sigma>\sum_{j=1}^{m_1-m_0}\beta_j$.
		
		There are two subcases.
		
		\textit{Case 2-1.} $\sigma\leq\sum_{j=1}^{m_k-m_0}\beta_j=\sum_{j=1}^{n}\beta_j$.
		
		In this subcase, let $t\in\{1,2,\cdots,k-1\}$ be the largest integer such that $\sigma>\sum_{j=1}^{m_t-m_0}\beta_j$.
		
		According to Lemma 5.14 (1) below, (1.1.1) holds for all $1\leq i\leq m_{t+1}-m_0$.
		
		If $m_{t+1}-m_0=n$, then the result follows. Otherwise for any $m_{t+1}-m_0<i\leq n$, we have
		\[\sum_{j=1}^{i}\alpha_j\leq\sigma\leq\sum_{j=1}^{m_{t+1}-m_0}\beta_j\leq\sum_{j=1}^{i}\beta_j,\]
		where the middle inequality comes from the maximality of $t$.
		
		We therefore conclude that (1.1.1) holds for all $1\leq i\leq n$.
		
		\textit{Case 2-2.} $\sigma>\sum_{j=1}^{m_k-m_0}\beta_j=\sum_{j=1}^{n}\beta_j$.
		
		We show that this subcase does not occur. 
		
		Otherwise take $s=t'+1=k$ in Lemma 5.14 (1) below. We find that  $\sum_{j=1}^{m_k-m_0}\alpha_j\leq\sum_{j=1}^{m_k-m_0}\beta_j$. However, this contradicts the fact that $\sigma=\sum_{j=1}^{m_k-m_0}\alpha_j>\sum_{j=1}^{m_k-m_0}\beta_j$.
		
		We have considered all possible cases and completed the proof.	
	\end{proof}
	
	\begin{con}
		In \textit{Case 2}, let $t'=t$ if \textit{Case 2-1} occurs and $t'=k-1$ if \textit{Case 2-2} occurs.
		
		By solving a finite number of simple inequalities involving the variable $\rho$, we can find a constant $c''>0$ (which depends on $t'$) such that
		\begin{equation}
			\rho^\frac{\sigma-\sum_{j=1}^{n-v}\beta_j}{v}<\omega^\frac{n}{v}\Big|\frac{1}{b_v}\Big|^\frac{1}{v},\quad\forall~-m_{t'}\leq v<n
		\end{equation}
		for all $\rho\in(0,c'']$, where $\omega=p^{-\frac{1}{p-1}}$ and $b_0,b_1,\cdots,b_n$ are as in Notation 5.10.
		
		Let $C_1$ be the constant in Construction 5.11, take $C_2=\min\{C_1,c''\}$. 
	\end{con}
	
	\begin{lemma}
		Assuming the conditions of Case 2, let $t'$ and $C_2$ be as defined in Construction 5.13. Then for any integer $s\in\{1,2,\cdots,t'+1\}$, the following statements hold.
		
		(1) The inequality (1.1.1) holds for all $1\leq i\leq m_s-m_0$.
		
		(2) When $m_{s-1}-m_0<j\leq m_s-m_0$, the subsidiary radii can be computed as follows.
		\[R_j(M,\rho)=\omega\rho^{1+\beta_j}\Big|\frac{b_{-m_{s-1}}}{b_{-m_s}}\Big|^\frac{1}{m_s-m_{s-1}},\quad\forall~\rho\in(0,C_2].\]
	\end{lemma}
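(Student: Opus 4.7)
The plan is to prove (1) and (2) jointly by induction on $s \in \{1, 2, \ldots, t'+1\}$. The base case $s = 1$ is immediate from Lemma 5.12: since $C_2 \leq C_1$ and $b_{-m_0} = b_n = 1$, the formula of Lemma 5.12(2) coincides with the formula claimed in (2) here. For the inductive step, I would first establish (2) for $s+1$, and then deduce (1) for $s+1$ from it by convexity, mirroring the end of the proof of Lemma 5.12.

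The heart of the proof is (2) for $s+1$. By the induction hypothesis together with Proposition 4.7, the slopes of $\mathrm{NP}_{C_2}(\ell)$ on $[m_{i-1}, m_i]$ are already known to be effective for $i = 1, \ldots, s$. To extend effectivity to the segment $[m_s, m_{s+1}]$, Lemma 5.8(2) reduces the question to verifying effectivity at $\rho = C_2$, which is equivalent to showing $R_{m_s - m_0 + 1}(M, C_2) < \omega C_2$. Once this is known, Proposition 4.7 combined with Proposition 5.6, Lemma 5.7, and the evaluation of $v_\rho(a_{-m_s}), v_\rho(a_{-m_{s+1}})$ via (5.5.1) produces the explicit formula for the subsidiary radii claimed in (2) for $s+1$.

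I would prove the inequality $R_{m_s - m_0 + 1}(M, C_2) < \omega C_2$ by contradiction. Set $r = -\log C_2$, $L = n - (m_s - m_0)$, and $\delta = \sigma - \sum_{j=1}^{m_s - m_0} \beta_j$; the hypothesis $s \leq t'$ ensures $\delta > 0$ (by the definition of $t = t'$ in Case 2-1, and automatically in Case 2-2). On the one hand, convexity of $F_n(M, r)$ (Proposition 4.15), together with the solvability behavior $F_n(M, r) = (n + \sigma) r$ near $r = 0$ from (4.12.1), gives the global lower bound $F_n(M, r) \geq (n + \sigma) r$ on $(0, \infty)$. On the other hand, the induction hypothesis (2) for $s$ makes $F_{m_s - m_0}(M, r)$ an explicit affine function on $[-\log C_2, \infty)$ whose constant term telescopes, using $b_{-m_0} = 1$, to $\log|b_{-m_s}| - (m_s - m_0)\log\omega$. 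Subtracting the two expressions, dividing by $L$, and using that $f_j(M, r)$ is non-increasing in $j$, I obtain the lower bound
\[
f_{m_s - m_0 + 1}(M, r) \;\geq\; \Bigl(1 + \frac{\delta}{L}\Bigr) r + \frac{(m_s - m_0)\log\omega - \log|b_{-m_s}|}{L}.
\]
Combining this with the contradiction hypothesis $f_{m_s - m_0 + 1}(M, r) \leq r - \log \omega$, a straightforward rearrangement yields $C_2^\delta \geq \omega^n/|b_{-m_s}|$, which directly contradicts (5.13.1) evaluated at $v = -m_s$ and $\rho = C_2$.

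For (1) of $s+1$: once (2) is established for $s+1$, the function $F_i(M, r)$ for $i \leq m_{s+1} - m_0$ is affine on $[-\log C_2, \infty)$ with slope $i + \sum_{j=1}^i \beta_j$, and has slope $i + \sum_{j=1}^i \alpha_j$ near $r = 0$ by (4.12.1). The convexity of $F_i(M, r)$ (Proposition 4.15) then yields (1.1.1) for all $1 \leq i \leq m_{s+1} - m_0$. The main obstacle is the contradiction step in (2): one must carefully package the global convexity bound on $F_n$, the affine expression for $F_{m_s - m_0}$ coming from induction, and the telescoping product of the constants $|b_{-m_{i-1}}/b_{-m_i}|$ into precisely the form needed to contradict (5.13.1).
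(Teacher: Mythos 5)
Your proposal is correct and follows essentially the same route as the paper: induction on $s$ with base case from Lemma 5.12, the global bound $F_n(M,r)\geq(n+\sigma)r$ from convexity and (4.12.1), the telescoped product of the first $m_s-m_0$ subsidiary radii from the induction hypothesis, the ordering of the radii to isolate $R_{m_s-m_0+1}$, and finally (5.13.1) at $v=-m_s$ to force $R_{m_s-m_0+1}(M,\rho)<\omega\rho$, after which Lemma 5.7, Proposition 4.7 and Proposition 5.6 yield the explicit formula. The only differences are cosmetic: you argue by contradiction in logarithmic coordinates where the paper derives the same inequality directly on the radii.
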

	
	\begin{proof}
		We will prove (1) and (2) by induction on $s$.
		
		According to Proposition 4.12, there exists a sufficiently small constant $\epsilon>0$ such that
		\begin{equation}
			0<C_2<1-\epsilon<1,\quad R_i(M,\rho)=\rho^{1+\alpha_i},\quad\forall~\rho\in(1-\epsilon,1) \ \mathrm{and} \ \forall~1\leq i\leq n.
		\end{equation}
		
		The case where $s=1$ has already been proved in Lemma 5.12.
		
		Now suppose that (1) and (2) hold up to $s$ for some $s\in\{1,\cdots,t'\}$. We will then show that (1) and (2) also hold for $s+1$.
		
		By combining the convexity and continuity of $F_n(M,r)$ on the interval $(0,\infty)$ in Proposition 4.15 with (5.14.1), we can deduce that $\prod_{j=1}^{n}R_j(M,\rho)\leq\rho^{n+\sigma}$ when $\rho\in(0,C_2]$.
		
		Since the subsidiary radii are listed in the increasing order $R_1(M,\rho)\leq\cdots\leq R_n(M,\rho)$, we find that
		\begin{equation}
			\Big(R_{m_s-m_0+1}(M,\rho)\Big)^{-m_s}\cdot\prod_{j=1}^{m_s-m_0}R_j(M,\rho)\leq\prod_{j=1}^{n}R_j(M,\rho)\leq\rho^{n+\sigma}.
		\end{equation}
		
		Using the induction hypothesis in (2) and noting that $b_{-m_0}=b_n=1$, we have
		
		\begin{equation}
			\prod_{j=1}^{m_s-m_0}R_j(M,\rho)=\omega^{m_s-m_0}\cdot\rho^{(m_s-m_0+\sum\limits_{j=1}^{m_s-m_0}\beta_j)}\cdot\Big|\frac{1}{b_{-m_s}}\Big|.
		\end{equation}
		
		Combining (5.14.2) and (5.14.3), we have
		\[R_{m_s-m_0+1}(M,\rho)\leq\rho^{1+\frac{\sigma-\sum_{j=1}^{m_s-m_0}\beta_j}{-m_s}}\omega^{1-\frac{m_0}{m_s}}|b_{-m_s}|^\frac{1}{-m_s}<\omega\rho,\]
		where the second inequality comes from (5.13.1).
		
		Since $R_{m_s-m_0+1}(M,\rho)<\omega\rho$, by Lemma 5.7 and Proposition 4.7, we have
		\[R_{m_s-m_0+1}(M,\rho)=\cdots=R_{m_{s+1}-m_0}(M,\rho)=\omega\mathrm{exp}(\frac{v_\rho(a_{-m_{s+1}})-v_\rho(a_{-m_s})}{m_{s+1}-m_s}),\quad\forall~\rho\in(0,C_2].\]
		
		Using (5.5.1) to calculate $v_\rho(a_{-m_{s+1}})$ and $v_\rho(a_{-m_s})$, then combining the results with the following equation in Proposition 5.6
		\[\beta_{m_s-m_0+1}=\cdots=\beta_{m_{s+1}-m_0}=-1-\frac{\mathrm{ord}_x(a_{-m_{s+1}})-\mathrm{ord}_x(a_{-m_s})}{m_{s+1}-m_s},\]
		we find that
		\begin{equation}
			R_j(M,\rho)=\omega\rho^{1+\beta_j}\Big|\frac{b_{-m_s}}{b_{-m_{s+1}}}\Big|^\frac{1}{m_{s+1}-m_s},\quad\forall~\rho\in(0,C_2] \ \mathrm{and} \ \forall~m_s-m_0<j\leq m_{s+1}-m_0.
		\end{equation}
		We have proved that (2) is true for $s+1$.
		
		The induction hypothesis in (2') up to $s$ and (5.14.4) show that $F_i(M,r)$ is a straight line with a slope of $i+\sum_{j=1}^{i}\beta_j$ on the interval $[-\mathrm{log}C_2,\infty)$ for all $m_s-m_0<i\leq m_{s+1}-m_0$. Meanwhile, (5.14.1) shows that $F_i(M,r)$ is a straight line with a slope of $i+\sum_{j=1}^{i}\alpha_j$ on the interval $(0,-\mathrm{log}(1-\epsilon))$. The convexity of $F_i(M,r)$ on the interval $(0,\infty)$ then forces $i+\sum_{j=1}^{i}\alpha_j\leq i+\sum_{j=1}^{i}\beta_j$ (see Figure 1). Therefore, (1.1.1) holds for all $m_s-m_0<i\leq m_{s+1}-m_0$.
		
		By the induction hypothesis, (1.1.1) already holds for all $1\leq i\leq m_s-m_0$. We can therefore conclude that (1.1.1) holds for all $1\leq i\leq m_{s+1}-m_0$ and that (1) also holds for $s+1$.
		
		We have finished the induction.
	\end{proof}
	
	\section{Examples of Bessel equations}
	
	Let $(k,|\cdot|)$ be a complete discrete valuation field of mixed characteristic $(0,p)$ with a finite residue field $\kappa$. Fix an algebraic closure $\overline{k}$ of $k$ and let $k^\mathrm{ur}$ be the maximum unramified extension of $k$ in $\overline{k}$. Then the residue field of $k^\mathrm{ur}$ is an algebraic closure of $\kappa$, which we denote by $\overline{\kappa}$.
	
	Suppose that $k$ contains an element $\pi$ such that $\pi^{p-1}=-p$. Normalize the absolute value so that $|p|=p^{-1}$, then $|\pi|=\omega=p^{-\frac{1}{p-1}}$.
	
	\subsection{Monodromy representations and Swan conductors}
	
	Let $F=\overline{\kappa}((x))$ and $I_F$ be the Galois group $\mathrm{Gal}(F^\mathrm{sep}/F)$. Given a \textit{quasi-unipotent} differential module $M$ of rank $n$ over $\mathcal{R}$ (see \cite{Ked05}, Section 4.6), we can associate to $M$ a representation
	\[\rho_M:I_F\rightarrow\mathrm{GL}_n(k^\mathrm{ur})\]
	together with a nilpotent operator $N\in\mathfrak{gl}_n(k^\mathrm{ur})$, such that the image of $\rho_M$ is finite and all elements in the image of $\rho_M$ commute with $N$ (see \cite{Ked05}, Section 4.7). The representation $\rho_M$ is called the \textit{monodromy representation} associated with $M$.
	
	Let $G$ be the image of $\rho_M$. There exists a unique decomposition of $\rho_M$ known as the \textit{break decomposition} (\cite{Ka88}, Proposition 1.1).
	\[\rho_M\simeq\bigoplus\limits_{\alpha}\rho_M(\alpha),\]
	where $\alpha$ runs through the set of jumps of the upper number ramification filtration of $G$ (see \cite{Se79}, Chapter IV.3).
	
	An index $\alpha$ such that $\rho_M(\alpha)\neq0$ is called a \textit{break} of $\rho_M$. The \textit{Swan conductor} of $\rho_M$ is defined by
	\[\mathrm{Swan}(\rho_M)=\sum_{\mathrm{breaks}~\alpha}\alpha\cdot\mathrm{rank}(\rho_M(\alpha)).\]
	
	Since quasi-unipotent modules are solvable (\cite{Ked22}, Proposition 20.1.3), it makes sense to talk about the $p$-adic slopes of $M$. Recall the irregularity of $M$ in Definition 4.13. According to a result of Tsuzuki (\cite{Tsu98}, Theorem 7.2.2), we have
	\[\mathrm{Irr}(M)=\mathrm{Swan}(\rho_M).\]
	
	Let $M'$ be an irreducible submodule of $M$, then $M'$ is of pure slope by Theorem 4.14. Applying Tsuzuki's result to all irreducible submodules $M'$, we deduce that the $p$-adic slopes of $M$ coincide with the breaks of $\rho_M$.
	
	We conclude the discussion above with the following corollary.
	
	\begin{coro}
		Let $M$ be a quasi-unipotent module over $\mathcal{R}$. Let $G$ be the image of the monodromy representation associated with $M$. Then the $p$-adic slopes of $M$ take value in the set of jumps of the upper number ramification filtration of $G$.
	\end{coro}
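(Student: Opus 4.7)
The plan is to synthesize the chain of identifications already assembled in the preceding subsection, so the proof reduces to transporting the break decomposition across the dictionary provided by Tsuzuki's theorem. There is essentially no new calculation; the task is to explicitly verify that the indexing sets on the two sides of the dictionary agree.

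First I would invoke the break decomposition of Katz to write $\rho_M \simeq \bigoplus_\alpha \rho_M(\alpha)$, where by definition $\alpha$ ranges over the jumps of the upper numbering ramification filtration of $G$. This already shows that every break of $\rho_M$, meaning any $\alpha$ for which $\rho_M(\alpha) \neq 0$, lies in the jump set of $G$.

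Next I would transfer this conclusion across to $M$. Using Theorem 4.14, decompose $M$ into its pure slope constituents $M(\alpha_i)$; each constituent is again quasi-unipotent, and Tsuzuki's formula $\Irr(M') = \Swan(\rho_{M'})$ applied to each irreducible piece $M'$ of $M$ forces the $p$-adic slope of $M'$ to equal the unique break of $\rho_{M'}$. Since the monodromy representation of a direct sum is the direct sum of the monodromy representations, this identifies the multiset of $p$-adic slopes of $M$ with the multiset of breaks of $\rho_M$.

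Combining these two steps, every $p$-adic slope of $M$ appears as a break of $\rho_M$, and every break of $\rho_M$ is a jump of the upper numbering filtration of $G$, which is the desired inclusion. There is no serious obstacle here; the only point that deserves care is checking that Tsuzuki's theorem may indeed be applied summand by summand, which follows because the slope decomposition in Theorem 4.14 is compatible with the formation of monodromy representations via the break decomposition.
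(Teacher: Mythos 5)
Your proposal is correct and follows essentially the same route as the paper: the break decomposition of $\rho_M$ indexed by the jumps of the upper numbering filtration of $G$, combined with Tsuzuki's identity $\mathrm{Irr}=\mathrm{Swan}$ applied to the irreducible (pure-slope, by Theorem 4.14) constituents to identify the $p$-adic slopes of $M$ with the breaks of $\rho_M$. No substantive difference from the paper's argument.
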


	\subsection{Bessel equations}
	
	Let $\mathcal{O}_k$ be the ring of integers in $k$. Let $\sigma:k\rightarrow k$ be a lifting of the $p$-th power Frobenius on $\kappa$ such that $\sigma(\pi)=\pi$. Let $X=\mathbb{P}_\kappa^1-\{0,\infty\}$ and $\hat{\mathbb{P}}^1$ be the $p$-adic completion of $\mathbb{P}^1$ over $\mathrm{Spf}\mathcal{O}_k$ with a coordinate $x$, which is a smooth lifting of $\mathbb{P}_\kappa^1$.
	
	Over the frame $(X,\mathbb{P}_\kappa^1,\hat{\mathbb{P}}^1)$ (see \cite{Be96}), the rank $n$ \textit{Bessel overconvergent} $F$\textit{-isocrystal} $\mathrm{Be}_n^\dagger$ on $X/k$, has the underlying connection given by
	\[d-\begin{pmatrix}
		0 & 1 & 0 & \cdots & 0 \\
		0 & 0 & 1 & \cdots & 0 \\
		\vdots & \vdots & \ddots & \ddots & \vdots \\
		0 & 0 & 0 & \cdots & 1 \\
		\pi^nx & 0 & 0 & \cdots & 0 
	\end{pmatrix}\frac{dx}{x}.\]
	Its Frobenius structure is constructed in \cite{Dw74} (3.9), \cite{Sp77} Theorem 1.3.9 and \cite{X-Z22} Theorem 4.4.4.
	
	At the point $\infty$, $\mathrm{Be}_n^\dagger$ induces a rank $n$ differential module $M$ over $\mathcal{R}$ with a Frobenius structure, whose associated differential equation is the following rank $n$ \textit{Bessel equation}
	\begin{equation}
		(x\frac{d}{dx})^n-\frac{(-\pi)^n}{x}=0.
	\end{equation}
	
	According to the $p$-adic local monodromy theorem (see \cite{An02F,Ked04,Meb02}), a differential module over $\mathcal{R}$ with a Frobenius structure is quasi-unipotent, hence we can talk about the $p$-adic slopes of $M$.
	
	\begin{prop}
		Let $\alpha_1,\cdots,\alpha_n$ (resp. $\beta_1,\cdots,\beta_n$) be the $p$-adic slopes (resp. formal slopes) of $M$, then we have
		\[\alpha_1=\cdots=\alpha_n=\beta_1=\cdots=\beta_n=\frac{1}{n}.\]
	\end{prop}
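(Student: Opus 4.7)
The plan is to establish the formal slope statement by a direct Newton polygon computation, use Theorem 1.1 to bound the $p$-adic slopes from above, and combine Tsuzuki's comparison theorem with the known Swan conductor of the Bessel monodromy representation to force equality.

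First I would rewrite the Bessel equation as a monic twisted polynomial over $\mathcal{A}_x$. Using the classical expansion $(x\frac{d}{dx})^n = \sum_{k=1}^{n} S(n,k)\, x^k (\frac{d}{dx})^k$, where $S(n,k)$ is the Stirling number of the second kind (so $S(n,n) = 1$ and each $S(n,k) \in \mathbb{Z}_{>0} \subset k^\times$), and dividing through by $x^n$ to normalize the leading coefficient, the equation $(x\frac{d}{dx})^n - (-\pi)^n/x = 0$ becomes
\[\ell = T^n + \sum_{k=1}^{n-1} S(n,k)\, x^{k-n} T^k - (-\pi)^n x^{-(n+1)} \in \mathcal{A}_x\{T\}, \quad T = \tfrac{d}{dx},\]
so that $\mathrm{ord}_x(a_n) = 0$, $\mathrm{ord}_x(a_k) = k-n$ for $1 \leq k \leq n-1$, and $\mathrm{ord}_x(a_0) = -(n+1)$.

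Next I would compute $\mathrm{FNP}(\ell)$. The defining points $(-n, 0), (-(n-1), -1), \ldots, (-1, -(n-1))$ are collinear along the slope $-1$ line through $(-n, 0)$, which extrapolates to the value $-n$ at $x = 0$. Since the actual endpoint $(0, -(n+1))$ sits strictly lower, a direct comparison shows that the single segment from $(-n, 0)$ to $(0, -(n+1))$ of slope $-\frac{n+1}{n}$ passes strictly below every intermediate point and is therefore the entire lower convex hull. This slope is effective since $-\frac{n+1}{n} < -1$, so by Lemma 3.4 all $n$ formal slopes equal $-(-\frac{n+1}{n}) - 1 = \frac{1}{n}$, yielding $\beta_1 = \cdots = \beta_n = \frac{1}{n}$.

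Theorem 1.1 applied with $i = 1$ then yields $\alpha_1 \leq \beta_1 = \frac{1}{n}$, and since the $p$-adic slopes are non-negative and weakly decreasing, $\alpha_j \leq \frac{1}{n}$ for every $j$, whence $\sum_{j=1}^n \alpha_j \leq 1$. By Proposition 4.12(2) this sum is an integer, hence lies in $\{0, 1\}$. To exclude the value $0$, I would invoke Tsuzuki's identity $\mathrm{Irr}(M) = \mathrm{Swan}(\rho_M)$ recalled in Section 6.1, together with the classical computation that the Kloosterman/Bessel monodromy representation at infinity has Swan conductor equal to $1$. The forced equality $\sum \alpha_j = 1$, combined with the bound $\alpha_j \leq \frac{1}{n}$ on $n$ non-negative terms, pins down $\alpha_j = \frac{1}{n}$ for every $j$.

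The main obstacle I expect is justifying $\mathrm{Swan}(\rho_M) = 1$: this is classical input about Kloosterman sheaves (Katz's monograph, and it is also implicit in the Frobenius-structure construction of the cited [X-Z22]) that lies outside the internal machinery of the present paper, so citing or briefly reproducing it is the one non-routine step in the argument.
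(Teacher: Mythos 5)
Your proof is correct, and the formal-slope half (the Newton polygon of the rewritten equation is the single segment from $(-n,0)$ to $(0,-n-1)$) is essentially the paper's own computation. For the $p$-adic half, however, you take a genuinely different route. The paper does not use Theorem 1.1 here at all: it computes the generic radius directly from the Christol--Mebkhout formula, getting $R_1(M,\rho)=\omega\rho\,|\pi^n/x|_\rho^{-1/n}=\rho^{1+1/n}$ for $\rho<\omega^n$, and then uses solvability together with the convexity of $F_1(M,r)$ to force $R_1(M,\rho)=\rho^{1+1/n}$ on all of $(0,1)$ --- note that convexity here gives the \emph{lower} bound $\alpha_1\ge 1/n$ as well as the upper bound, since $F_1$ must lie above its supporting line $(1+\tfrac1n)r$ through the origin. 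After that, integrality of $\mathrm{Irr}(M)$ finishes exactly as in your last step. Your version replaces both ingredients: the upper bound $\alpha_1\le 1/n$ comes from Theorem 1.1 (legitimate, since the Bessel equation gives an explicit solvable model over $\mathcal{A}_x$ and Section 6 comes after the proof of Theorem 1.1), and the lower bound $\mathrm{Irr}(M)\ge 1$ comes from Tsuzuki's formula plus the classical fact $\mathrm{Swan}_\infty(\mathrm{Kl}_n)=1$. That external input is correct (Katz, transferred via the companion compatibility the paper records in Remark 6.8), but it is worth being aware that it runs against the grain of Section 6, whose purpose is to deduce ramification data of $\rho_M$ \emph{from} the differential-equation side (Proposition 6.2 feeds, via Corollary 6.1, into the determination of the image of $\rho_M$ in Propositions 6.7 and 6.13); the paper's direct radius computation keeps that logic self-contained, whereas yours imports the answer for the wildness from the $\ell$-adic side. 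If you want to stay internal, the missing lower bound is exactly what the paper's convexity argument supplies.
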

	
	\begin{proof}
		When $\rho<\omega^n$, we compute the generic radius of $M$ as follows (\cite{C-M93}, Proposition 2.1.1).
		\[R_1(M,\rho)=\omega\rho\Big|\frac{\pi^n}{x}\Big|_\rho^{-\frac{1}{n}}=\rho^{1+\frac{1}{n}}.\]
		
		The solvability of $M$ and the convexity of $F_1(M,r)$ then imply that $R_1(M,\rho)=\rho^{1+\frac{1}{n}}$ for all $\rho\in(0,1)$. We therefore conclude that $\alpha_1=\frac{1}{n}$.
		
		Since the irregularity of $M$ is an integer by Proposition 4.12, the only possibility is that
		\[\alpha_1=\cdots=\alpha_n=\frac{1}{n},\quad\mathrm{Irr}(M)=\sum_{i=1}^{n}\alpha_i=1.\]
		
		Rewrite (6.1.1) in the form
		\begin{equation}
			(\frac{d}{dx})^n+\frac{a_{n-1}}{x}(\frac{d}{dx})^{n-1}+\cdots+\frac{a_1}{x^{n-1}}\frac{d}{dx}-\frac{(-\pi)^n}{x^{n+1}}=0,\quad a_1,\cdots,a_{n-1}\in\mathbb{Z}.
		\end{equation}
	
		The formal Newton polygon associated with (6.2.1) is a segment with endpoints at $(-n,0)$ and $(0,-n-1)$. We then conclude by Lemma 3.4 that
		\[\beta_1=\cdots=\beta_n=\frac{1}{n}.\qedhere\]
	\end{proof}
	
	\begin{rem}
		The inequalities in (1.1.1) are all equalities in this case.
	\end{rem}
	
	Depending on whether $n$ and $p$ are coprime, the image of $\rho_M$ behaves in different ways. In Section 6.3, we determine the image of $\rho_M$ when $(n,p)=1$. In Section 6.4, we focus on a special case where $n=p=2$. In particular, the adjoint module $\mathrm{Ad}(M)$ in this case is an example where the inequality (1.1.1) is strict unless $i=n$.
	
	\subsection{Image of $\rho_M$: coprime case}
	
	\begin{nota}
		When $(n,p)=1$, let $a$ be the order of $p$ in $(\mathbb{Z}/n\mathbb{Z})^\times$ and write $q=p^a$. Fix a primitive $n$-th root of unity $\zeta\in k^\mathrm{ur}$ and let $\overline{\zeta}\in\overline{\kappa}$ be its reduction. When $n$ is even, fix a primitive $2n$-th root of unity $\eta\in k^\mathrm{ur}$ such that the reduction $\overline{\eta}^2=\overline{\zeta}$ in $\overline{\kappa}$.
	\end{nota}
	
	We first describe the image of $\rho_M$.
	
	\begin{con}
		When $n$ is odd, we construct a tower of extensions
		\[\overline{\kappa}((x))\subseteq E=\overline{\kappa}((y))\subseteq E(w),\quad x=y^n,w^q-w=\frac{1}{y}.\]
		
		Note that $E(w)/\overline{\kappa}((x))$ is a Galois extension because $E(w)$ is the splitting field of the polynomial $f(T)=(T^q-T)^n-\frac{1}{x}$ over $\overline{\kappa}((x))$. In fact, all of the roots of $f(T)$ are $\overline{\zeta}^iw+\alpha$, where $i=0,1,\cdots,n-1$ and $\alpha\in\mathbb{F}_q$.
		
		Let $G$ be the Galois group of the extension $E(w)/\overline{\kappa}((x))$, then $G$ is isomorphic to the semidirect product of $\mathbb{F}_q$ and $\mathbb{Z}/n\mathbb{Z}$, where the action of $i\in\mathbb{Z}/n\mathbb{Z}$ on $\mathbb{F}_q$ is multiplication by $\overline{\zeta}^{-i}$. To be precise, the isomorphism is given by sending $(\alpha,i)\in\mathbb{F}_q\rtimes\mathbb{Z}/n\mathbb{Z}$ to $\sigma\in G$ such that
		\[\sigma(y)=\overline{\zeta}^iy,\quad\sigma(w)=\overline{\zeta}^{-i}(w+\alpha).\]
		
		The wild inertia subgroup of $G$ is
		\[G_1=\mathrm{Gal}\Big(E(w)/\overline{\kappa}((y))\Big)\simeq\mathbb{F}_q,\]
		and the lower number ramification filtration of $G$ is given by
		\[G=G_0\supseteq G_1\supseteq G_2=\{1\}.\]
		The upper number ramification filtration of $G$ admits a unique jump at $\frac{1}{n}$.
	\end{con}
	
	\begin{con}
		When $n$ is even, we construct a tower of extensions
		\[\overline{\kappa}((x))\subseteq E=\overline{\kappa}((y))\subseteq E(w),\quad x=y^{2n},w^q-w=\frac{1}{y^2}.\]
		
		Note that $E(w)/\overline{\kappa}((x))$ is a Galois extension because $E(w)$ is the splitting field of the polynomial $g(T)=f(T)(T^{2n}-x)$ over $\overline{\kappa}((x))$, where $f(T)$ is the polynomial in Construction 6.5.
		
		Let $G$ be the Galois group of the extension $E(w)/\overline{\kappa}((x))$, then $G$ is isomorphic to the semidirect product of $\mathbb{F}_q$ and $\mathbb{Z}/2n\mathbb{Z}$, where the action of $i\in\mathbb{Z}/2n\mathbb{Z}$ on $\mathbb{F}_q$ is multiplication by $\overline{\zeta}^{-i}$. To be precise, the isomorphism is given by sending $(\alpha,i)\in\mathbb{F}_q\rtimes\mathbb{Z}/2n\mathbb{Z}$ to $\sigma\in G$ such that
		\[\sigma(y)=\overline{\eta}^iy,\quad\sigma(w)=\overline{\zeta}^{-i}(w+\alpha).\]
		
		The wild inertia subgroup $G_1$ is isomorphic to $\mathbb{F}_q$, and the lower number ramification filtration of $G$ is given by
		\[G=G_0\supseteq G_1=G_2\supseteq G_3=\{1\}.\]
		The upper number ramification filtration of $G$ admits a unique jump at $\frac{1}{n}$.
	\end{con}
	
	\begin{prop}
		When $n$ is odd (resp. even), the image of $\rho_M$ is isomorphic to the Galois group $G$ in Construction 6.5 (resp. Construction 6.6).
	\end{prop}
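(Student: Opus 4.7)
The plan is to establish two complementary facts: (i) that $\rho_M$ factors through the finite quotient $I_F \twoheadrightarrow G = \Gal(E(w)/F)$, and (ii) that the induced homomorphism $G \to \GL_n(k^{\mathrm{ur}})$ is injective.

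For (i), I would pull back $M$ along the extension of Robba rings corresponding to $x = y^n$ (odd case) or $x = y^{2n}$ (even case). Under the substitution, the Bessel equation $(x\,d/dx)^n u = (-\pi)^n u/x$ becomes a pure slope-one differential module (by Proposition 6.2 together with the slope transformation under a ramified pullback), whose formal Turrittin-Levelt decomposition at $y=0$ has $n$ exponential factors $\mathrm{exp}(-n\pi\overline{\zeta}^i/y)$ (respectively $\mathrm{exp}(-n\pi\overline{\zeta}^i/y^2)$) for $i = 0, \ldots, n-1$. The coefficients $-n\overline{\zeta}^i$ are distinct elements of $\mathbb{F}_q$, with $q = p^a$ chosen precisely so that $\mathbb{F}_q$ contains the $n$-th roots of unity, so the strong decomposition theorem (Theorem 4.4) identifies these $n$ formal factors with pairwise non-isomorphic rank-one modules on the Robba ring in $y$. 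Each such exponential is trivialized by the Artin-Schreier extension $w^q - w = 1/y$ (respectively $w^q - w = 1/y^2$) via the Dwork-Robba correspondence between rank-one slope-one modules and additive characters of $\mathbb{F}_q$. Combining the two coverings, $M$ is trivialized over $E(w)/F$, so $\rho_M$ factors through $G$.

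For (ii), I would analyze the restriction $\rho_M|_{G_1}$ to the wild inertia $G_1 \simeq \mathbb{F}_q$. By step (i), this restriction decomposes as a direct sum of the $n$ additive characters $\chi_i$ of $\mathbb{F}_q$ attached to the Dwork coefficients $-n\overline{\zeta}^i \in \mathbb{F}_q$. Its kernel equals the $\Tr_{\mathbb{F}_q/\mathbb{F}_p}$-orthogonal complement of the $\mathbb{F}_p$-span of $\{-n\overline{\zeta}^i\}_{i=0}^{n-1}$; since $a$ is the multiplicative order of $p$ modulo $n$, the element $\overline{\zeta}$ generates $\mathbb{F}_q$ over $\mathbb{F}_p$, so this span is all of $\mathbb{F}_q$ and the kernel is zero. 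Faithfulness on the full group $G$ then follows: the cyclic action of $\Gal(E/F)$ on the $n$ summands, computed as $\overline{\zeta}^i \mapsto \overline{\zeta}^{i-1}$ in (i), is faithful of order $n$; in the even case, the extra order-$2$ element $y \mapsto -y$ fixes each exponential setwise but acts by $-1$ on the $y^{-1/2}$ twist carried by each solution of the Bessel equation, which supplies the missing faithful action of $\mathbb{Z}/2n\mathbb{Z}$ on $M$.

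The main obstacle is step (i): promoting the formal Turrittin-Levelt decomposition, which is automatic over $k((y))$, to a genuine decomposition of the pulled-back differential module on the Robba ring in $y$. The strong decomposition theorem only separates components by subsidiary radius, and here all $n$ radii coincide (all equal to the common slope), so a further splitting argument is needed. This should be achieved by identifying each formal summand with a distinct rank-one Dwork object and invoking the rigidity of such objects over the Robba ring at a fixed slope, but the bookkeeping is delicate and requires care with the convergence region of Dwork's exponential series, especially in the even case where the square-root twist complicates the direct identification.
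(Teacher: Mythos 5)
You have correctly identified the weak point of your own argument, and it is indeed a genuine gap rather than mere bookkeeping. Your step (i) needs the pulled-back module to decompose \emph{over the Robba ring in $y$} into $n$ rank-one Dwork exponential objects, but as you note, the Turrittin--Levelt decomposition is only formal (over $k((y))$), and Theorem 4.4 / the slope decomposition cannot produce this splitting because all $n$ subsidiary radii are equal. There is no general ``rigidity of Dwork objects at a fixed slope'' that converts a formal exponential factor into a convergent one; convergence of the factors $v_i(y)$ is precisely the analytic content that must be supplied from outside. The paper supplies it by quoting Sperber's explicit solution formula (Lemma 6.10, from \cite{Sp77} Proposition 5.1.7): the solutions of the pulled-back Bessel equation are $w_i(y)=y^{(n-1)/2}v_i(y)\exp(-\zeta^i n\pi/y)$ with $v_i$ convergent on the open unit disc. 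With that in hand, the paper does not even need a decomposition of the module --- it simply observes that each $\exp(-\zeta^i\pi/y)$ lies in the extension of $\mathcal{R}_y$ attached to the Artin--Schreier extension $z_i^p-z_i=\overline{\zeta}^i/y$ (via the Dwork exponential $\exp(\pi(z_i-z_i^p))$), and that the compositum of these degree-$p$ extensions is $w^q-w=1/y$ (Lemma 6.11, a Vandermonde argument using that $a$ is the order of $p$ mod $n$). So all solutions live in $\mathcal{R}_{1/w}$ and $\rho_M$ factors through $G$. Without Sperber's input (or an equivalent convergence statement), your step (i) does not close, and step (ii) depends on it.

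Your step (ii) is a genuinely different route from the paper's and would work once (i) is established: you compute $\rho_M|_{G_1}$ as a sum of additive characters $\alpha\mapsto\psi(\mathrm{Tr}_{\mathbb{F}_q/\mathbb{F}_p}(n\overline{\zeta}^i\alpha))$ and use nondegeneracy of the trace form plus $\mathbb{F}_p[\overline{\zeta}]=\mathbb{F}_q$ to get faithfulness on wild inertia, then handle the tame quotient (and the extra order-two element acting on the half-integral power of $y$ in the even case) by hand. The paper instead proves a purely group-theoretic statement (Lemma 6.12: every proper quotient of $G$ through which the representation could factor is a quotient of $\mathbb{Z}/n\mathbb{Z}$, hence tame) and rules these out because the upper-numbering filtration of the image must jump at $1/n$, by Corollary 6.1 combined with the slope computation of Proposition 6.2. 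The paper's version avoids identifying the individual characters and is more robust; yours gives finer information about the restriction to wild inertia but requires the explicit decomposition that is exactly the missing step (i).
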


	\begin{rem}
		Let $\ell$ be a prime number different to $p$. 
		The Bessel $F$-isocrystal is the $p$-adic companion of the $\ell$-adic Kloosterman sheaf $\Kl_n$ introduced by Deligne in \cite{Del77} in the sense of \cite{Abe18,Del80} (see \cite{X-Z22} Theorem 1.1.4). 
		Via an isomorphism $\overline{\mathbb{Q}}_p\simeq \overline{\mathbb{Q}}_{\ell}$, 
		the local monodromy representation $\rho_M$ associated to $\Be_{n,\infty}^{\dagger}$ is isomorphic to that of $\Kl_{n,\infty}$ (see \cite{Del73} Th\'eor\`eme 9.8, \cite{K-Z25} Theorem 4.4.5). 
		
		In the $\ell$-adic case, under the assumption that $(n,p)=1$, Fu and Wan explicitly calculated the local monodromy representation of $\Kl_{n,\infty}$ (\cite{F-W05}, Theorem 1.1), which is compatible with ours in Proposition 6.7. 

		In a forthcoming work of Xu and Yi \cite{X-Y}, the authors study the local monodromy representation associated to the Bessel $F$-isocrystal for reductive groups at $\infty$ via the theory of $p$-adic differential equations.   
	\end{rem}
	
	\begin{rem}
		Let $m=\frac{q-1}{n}$ and $\omega$ be a generator of $\mathbb{F}_q^\times$ such that $\omega^m=\overline{\zeta}$. Fix a nontrivial linear character $\psi:\mathbb{F}_q\rightarrow(k^\mathrm{ur})^\times$.
		
		When $n$ is odd, the character table of $G\simeq\mathbb{F}_q\rtimes\mathbb{Z}/n\mathbb{Z}$ is listed as follows. The character of $\rho_M$ is one of the $\chi_\ell$.
		\begin{table}[htbp]
			\centering
			\begin{tabular}{|c|c|c|c|}
				\hline
				conjugate class & $(0,0)$ & $(\omega^i,0)\quad i=1,\cdots,m$ & $(0,j)\quad j=1,\cdots,n-1$ \\ 
				\hline
				$\phi_t\quad t=1,\cdots,n$ & $1$ & $1$ & $\zeta^{jt}$ \\
				\hline
				$\chi_\ell\quad \ell=1,\cdots,m$ & $n$ & $\sum_{s=1}^{n}\psi(\omega^{i+\ell}\overline{\zeta}^s)$ & $0$ \\
				\hline
			\end{tabular}
		\end{table}
		
		When $n$ is even, the character table of $G\simeq\mathbb{F}_q\rtimes\mathbb{Z}/2n\mathbb{Z}$ is listed as follows. The character of $\rho_M$ is one of the $\chi_\ell$ or $\lambda_\ell$.
		\begin{table}[htbp]
			\centering
			\begin{tabular}{|c|c|c|c|c|c|}
				\hline
				conjugate class & $(0,0)$ & $(\omega^i,0)\quad i=1,\cdots,m$ & $(0,n)$ & $(\omega^i,n)\quad i=1,\cdots,m$ & $(0,j)\quad j\neq0,n$ \\ 
				\hline
				$\phi_t\quad t=1,\cdots,2n$ & $1$ & $1$ & $\eta^{nt}$ & $\eta^{nt}$ & $\eta^{jt}$ \\
				\hline
				$\chi_\ell\quad \ell=1,\cdots,m$ & $n$ & $\sum_{s=1}^{n}\psi(\omega^{i+\ell}\overline{\zeta}^s)$ & $n$ & $\sum_{s=1}^{n}\psi(\omega^{i+\ell}\overline{\zeta}^s)$ & $0$ \\
				\hline
				$\lambda_\ell\quad \ell=1,\cdots,m$ & $n$ & $\sum_{s=1}^{n}\psi(\omega^{i+\ell}\overline{\zeta}^s)$ & $-n$ & $-\sum_{s=1}^{n}\psi(\omega^{i+\ell}\overline{\zeta}^s)$ & $0$ \\
				\hline
			\end{tabular}
		\end{table}
	\end{rem}

	We will prove Proposition 6.7 by solving $p$-adic differential equations.
	
	\begin{lemma}
		$\mathrm{(}$\cite{Sp77} $\mathrm{Proposition}~5.1.7)$ Substitute $x$ by $y^n$ in (6.1.1), the equation is now
		\begin{equation}
			(y\frac{d}{dy})^n-\frac{(-n\pi)^n}{y^n}=0.
		\end{equation}
		Solutions to (6.10.1) are of the form
		\begin{equation}
			w_i(y)=y^\frac{n-1}{2}v_i(y)\mathrm{exp}(-\frac{\zeta^in\pi}{y}),\quad i=0,1,\cdots,n-1,
		\end{equation}
		where $v_i(y)$ converges in the open unit disc.
	\end{lemma}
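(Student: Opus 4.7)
The plan is to first verify the change of variable, then use an exponential ansatz to produce the $n$ formal solutions, and finally invoke $p$-adic solvability to ensure the analytic factors $v_i$ converge on the open unit disc.

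First I verify (6.10.1) by direct computation. From $x=y^n$ one has $dx = ny^{n-1}\,dy$, so $y\frac{d}{dy} = nx\frac{d}{dx}$ as operators on Laurent series in $y$. Therefore $(y\frac{d}{dy})^n = n^n(x\frac{d}{dx})^n$, and multiplying (6.1.1) by $n^n$ yields $(y\frac{d}{dy})^n - \frac{(-n\pi)^n}{y^n}=0$, which is (6.10.1).

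Next I extract the exponential factors by the ansatz $w = u(y)\exp(-c/y)$ for a constant $c$. Since $\partial := y\frac{d}{dy}$ satisfies $\partial\exp(-c/y) = (c/y)\exp(-c/y)$, conjugation by $\exp(-c/y)$ sends $\partial$ to $\partial + c/y$. Substituting into (6.10.1) and tracking the most singular $y^{-n}$ term yields $(c/y)^n u - \frac{(-n\pi)^n}{y^n}u = 0$ to leading order, forcing $c^n = (-n\pi)^n$. Parametrising the $n$ roots by a primitive $n$-th root of unity $\zeta$ gives (up to a harmless reindexing depending on the parity of $n$) the exponential factors $\exp(-\zeta^i n\pi / y)$ appearing in (6.10.2). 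After this twist, a direct expansion using $[\partial, c/y] = -c/y$ shows that the leading $y^{-n}$ singularity cancels, so the equation satisfied by $u_i$ has at worst a regular singularity at $y=0$. Writing $u_i = y^\alpha v_i(y)$ with $v_i(0)\neq 0$ and inspecting the indicial polynomial (obtained by balancing the next surviving singular power of $1/y$), one finds the unique indicial exponent $\alpha = (n-1)/2$ of multiplicity $n$, which gives the prefactor $y^{(n-1)/2}$ in (6.10.2).

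Finally, for convergence of each $v_i(y)$ on the open unit disc, I invoke the $p$-adic solvability of $M$: by Proposition 6.2 the generic radius satisfies $R_1(M,\rho) = \rho^{1+1/n}$ for every $\rho\in(0,1)$, and pulling back through the Kummer cover $x=y^n$ preserves solvability. The standard Dwork–Christol–Mebkhout transfer theorem — which bounds the radius of convergence of a formal horizontal section at a regular point of the twisted connection by the generic radius — then forces $v_i(y)$ to converge throughout $|y|<1$. The main obstacle is precisely this last step: the algebraic manipulations above are essentially mechanical once the ansatz is fixed, but passing from a uniform bound on generic radii to honest analyticity of the $v_i$ on the open unit disc is a genuine $p$-adic statement. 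The cleanest route is to follow Sprang's argument in \cite{Sp77}, Proposition 5.1.7, which carries out this analytic estimate directly for Bessel operators.
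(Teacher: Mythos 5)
The paper does not actually prove this lemma: it is quoted verbatim from Sperber (\cite{Sp77}, Proposition 5.1.7), so there is no internal argument to compare against. Your change of variables is correct ($y\frac{d}{dy}=nx\frac{d}{dx}$, hence $(y\frac{d}{dy})^n=n^n(x\frac{d}{dx})^n$), and the exponential ansatz correctly identifies the leading condition $c^n=(-n\pi)^n$. Two small inaccuracies there: for odd $n$ the roots are $c=-n\pi\zeta^i$, so the resulting factors are $\exp(+\zeta^in\pi/y)$, and $\{-\zeta^i\}\neq\{\zeta^i\}$ when $n$ is odd, so this is a sign discrepancy rather than a ``harmless reindexing''; and the phrase ``indicial exponent $(n-1)/2$ of multiplicity $n$'' is misleading --- each of the $n$ twisted operators is still irregular at $y=0$ and contributes exactly \emph{one} formal solution $y^{(n-1)/2}v_i(y)$ (the other $n-1$ formal solutions of that twisted operator carry the residual exponentials $\exp((c_i-c_j)/y)$); the count of $n$ comes from the $n$ distinct twists, not from a multiplicity-$n$ indicial root, which would normally force logarithms.

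The genuine gap is in the convergence step, which is the only substantive content of the lemma. Your proposed mechanism --- solvability of $M$ plus a transfer theorem ``at a regular point of the twisted connection'' --- does not work: $y=0$ is not a regular point of the twisted connection (the twisted module still has an irregular singularity there, since the other $n-1$ exponentials have not been removed), so neither the transfer theorem at an ordinary point nor Christol's transfer into a regular singular disc applies; and transfer at a nearby generic point of radius $\rho$ only yields convergence in a disc of radius $\rho^{1+1/n}<\rho$, which says nothing about convergence of $v_i$ on all of $|y|<1$. Indeed, a direct look at the coefficient recursion (e.g.\ for $n=2$ one gets $a_{m+1}=-\tfrac{(m+1/2)^2}{2c(m+1)}a_m$) shows the naive radius of convergence is on the order of $\omega$ or smaller because of the $p$-adic growth of $1/|m!|$; the convergence on the full open unit disc is a genuinely arithmetic fact that rests on the Frobenius structure of the Bessel isocrystal (Dwork \cite{Dw74}, then Sperber). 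So the step you flag as ``the main obstacle'' is not merely the hardest step --- the route you sketch for it would fail, and your fallback is the same citation the paper itself relies on. (Also, the author of \cite{Sp77} is Sperber, not Sprang.)
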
 
	
	\begin{lemma}
		Let $A_i$ denote the Artin-Schreier extension $\overline{\kappa}((\frac{1}{z}))/\overline{\kappa}((y))$ defined by
		\[z^p-z=\frac{\overline{\zeta}^i}{y},\quad i=0,1,\cdots,n-1.\]
		Then the composition of $A_i$ for $i=0,1,\cdots,n-1$ is $A=\overline{\kappa}((\frac{1}{w}))/\overline{\kappa}((y))$ defined by
		\[w^q-w=\frac{1}{y}.\]
		Recall that $q=p^a$ and $a$ is the order of $p$ in $(\mathbb{Z}/n\mathbb{Z})^\times$.
	\end{lemma}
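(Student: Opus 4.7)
The plan is to apply Artin--Schreier theory over the local field $F=\overline{\kappa}((y))$. Writing $\wp(u)=u^{p}-u$, abelian extensions of $F$ of exponent $p$ correspond bijectively to finite $\mathbb{F}_{p}$-subspaces of $F/\wp(F)$. The target extension $A/F$ is Galois with group $\mathbb{F}_{q}$ acting on $w$ by translation, so $[A:F]=q=p^{a}$ and $A/F$ has exponent $p$; thus both $A$ and the compositum of the $A_{i}$'s fit into this correspondence.

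The first step would be to verify each inclusion $A_{i}\subseteq A$ by exhibiting an explicit Artin--Schreier root inside $A$. For $\alpha\in\mathbb{F}_{q}$ set
\[v_{\alpha}=\sum_{j=0}^{a-1}\alpha^{p^{j}}w^{p^{j}}.\]
A telescoping calculation, using $\alpha^{q}=\alpha$ and $w^{q}-w=1/y$, collapses $v_{\alpha}^{p}-v_{\alpha}$ to $\alpha(w^{q}-w)=\alpha/y$. Specializing to $\alpha=\overline{\zeta}^{i}$ produces an element of $A$ satisfying the defining equation of $A_{i}$, so $A_{i}\subseteq A$.

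Let $K\subseteq A$ be the compositum of $A_{0},\ldots,A_{n-1}$. By the Artin--Schreier correspondence, $[K:F]=p^{\dim_{\mathbb{F}_{p}}V}$, where $V\subseteq F/\wp(F)$ is the $\mathbb{F}_{p}$-span of $\{[\overline{\zeta}^{i}/y]:i=0,\ldots,n-1\}$. Since $[A:F]=p^{a}$, the lemma reduces to $\dim_{\mathbb{F}_{p}}V=a$. I would deduce this from the injectivity of the $\mathbb{F}_{p}$-linear map $\overline{\kappa}\to F/\wp(F)$, $\alpha\mapsto[\alpha/y]$, which follows from a standard pole-order argument: if $\alpha/y=u^{p}-u$ with $u\in F=\overline{\kappa}((y))$, then either $u$ is regular at $y=0$ (forcing $u^{p}-u$ to be regular, contradicting $\alpha\neq 0$), or $u$ has a pole of some order $m\geq 1$, making $u^{p}-u$ have pole order $mp$, incompatible with $\mathrm{ord}_{y}(\alpha/y)=-1$ unless $\alpha=0$. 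Via this injection, $\dim_{\mathbb{F}_{p}}V$ equals the $\mathbb{F}_{p}$-dimension of the span of $\{1,\overline{\zeta},\ldots,\overline{\zeta}^{n-1}\}$ in $\overline{\kappa}$. Because $a$ is the multiplicative order of $p$ modulo $n$, the minimal polynomial of $\overline{\zeta}$ over $\mathbb{F}_{p}$ has degree $a$, so $\mathbb{F}_{p}[\overline{\zeta}]=\mathbb{F}_{q}$; and since $a\leq\phi(n)<n$, the first $a$ powers lie among the generators, making this span equal to $\mathbb{F}_{q}$ and its $\mathbb{F}_{p}$-dimension equal to $a$. Hence $[K:F]=p^{a}=[A:F]$ and $K=A$.

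The main technical ingredient is the telescoping identity $v_{\alpha}^{p}-v_{\alpha}=\alpha/y$, which breaks the single degree-$q$ extension $A$ into a family of Artin--Schreier pieces parametrized by $\alpha\in\mathbb{F}_{q}$; once this is in hand the rest is a clean dimension count combining Artin--Schreier theory with the identification $\mathbb{F}_{p}[\overline{\zeta}]=\mathbb{F}_{q}$.
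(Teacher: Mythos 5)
Your argument is correct, and it splits naturally into two halves. The first half coincides with the paper's: your element $v_{\alpha}=\sum_{j=0}^{a-1}\alpha^{p^{j}}w^{p^{j}}$ is exactly the paper's $z_i$ (with $\alpha=\overline{\zeta}^{i}$), and the telescoping computation $v_{\alpha}^{p}-v_{\alpha}=\alpha(w^{q}-w)=\alpha/y$ is the same as the paper's verification that each $A_i$ embeds in $A$. The second half is genuinely different. The paper proves the reverse containment explicitly: it writes $(z_0,\dots,z_{a-1})=(w,w^{p},\dots,w^{p^{a-1}})B$ for a Vandermonde matrix $B$ in the pairwise-distinct elements $\overline{\zeta}^{p^{k}}$, concludes $\det B\neq 0$, and so recovers $w$ as an $\overline{\kappa}$-linear combination of the $z_i$, giving $A\subseteq A_0\cdots A_{n-1}$ directly. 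You instead invoke the Artin--Schreier correspondence and count degrees: $[K:F]=p^{\dim_{\mathbb{F}_p}V}$ where $V$ is the span of the classes $[\overline{\zeta}^{i}/y]$ in $F/\wp(F)$, and your pole-order argument showing $\alpha\mapsto[\alpha/y]$ is injective reduces $\dim V$ to $\dim_{\mathbb{F}_p}\mathbb{F}_p[\overline{\zeta}]=a$, whence $K=A$ by comparing with $[A:F]\leq q$. Your route uses a heavier general tool but yields a cleaner conceptual count and avoids inverting any matrix; the paper's is more elementary and hands you the generator $w$ explicitly inside the compositum. One cosmetic point: you assert $[A:F]=q$ up front via the Galois action, which presupposes irreducibility of $T^{q}-T-1/y$; it is slightly cleaner to note that $[A:F]\leq q$ trivially and that equality (and irreducibility) then follow a posteriori from $K\subseteq A$ and $[K:F]=q$.
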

	
	\begin{proof}
		Let $w_0$ be a root of $w^q-w=\frac{1}{y}$ and write $z_i=\sum_{k=0}^{a-1}w_0^{p^k}\overline{\zeta}^{ip^k}$ for $i=0,1,\cdots,n-1$. Compute
		\[z_i^p-z_i=\sum_{k=0}^{a-1}w_0^{p^{k+1}}\overline{\zeta}^{ip^{k+1}}-\sum_{k=0}^{a-1}w_0^{p^k}\overline{\zeta}^{ip^k}=w_0^q\overline{\zeta}^{qi}-w_0\overline{\zeta}^i=\overline{\zeta}^i(w_0^q-w_0)=\frac{\overline{\zeta}^i}{y}.\]
		Hence each $A_i$ is contained in $A=\overline{\kappa}((\frac{1}{w}))$.

		Now consider the linear equation
		\[(z_0\quad z_1\quad\cdot\cdot\cdot\quad z_{a-1})=(w_0\quad w_0^p\quad\cdot\cdot\cdot\quad w_0^{p^{a-1}})B,\]
		where $B=(b_{kj})_{a\times a}$ is an $a\times a$ matrix with $b_{kj}=\overline{\zeta}^{(j-1)p^{k-1}}$. The determinant of $B$ is the Vandermonde determinant
		\[\mathrm{det}(B)=\prod_{1\leq k<j\leq a}(\overline{\zeta}^{p^{j-1}}-\overline{\zeta}^{p^{k-1}}).\]
		When $j\neq k$, we find that $\overline{\zeta}^{p^{j-1}}\neq\overline{\zeta}^{p^{k-1}}$ from the definition of $a$. Hence $\mathrm{det}(B)\neq 0$ and $w_0$ is a $k$-linear combination of $z_i$. We thus conclude that $A=\overline{\kappa}((\frac{1}{w}))$ is the composition of $A_i$ for $i=0,1,\cdots,n-1$.
	\end{proof}
	
	We are now ready to prove Proposition 6.7.
	
	\begin{proof}
		We first give the proof in the case where $n$ is odd.
		
	    Consider the tamely ramified extension $\overline{\kappa}((y))/\overline{\kappa}((x))$ such that $x=y^n$. The associated extension of $\mathcal{R}$ is denoted by $\mathcal{R}_y$, where $y$ is the variable satisfying $x=y^n$.
		
		Let $A_i$ be the Artin-Schreier extension of $\overline{\kappa}((y))$ in Lemma 6.11. The associated extension of $\mathcal{R}_y$ is denoted by $\mathcal{R}_\frac{1}{z_i}$, where $\frac{1}{z_i}$ is the variable satisfying $z_i^p-z_i=\frac{\zeta^i}{y}$.
		
		Now we have the following equality in $\mathcal{R}_\frac{1}{z_i}$.
		\[\mathrm{exp}(-\frac{\zeta^i\pi}{y})=\mathrm{exp}(\pi(z_i-z_i^p)).\]
		The Dwork exponential series $\mathrm{exp}(\pi(z_i-z_i^p))$ converges when $|z_i|<p^\frac{p-1}{p^2}$ (see \cite{Rob00}, Chapter 7.2.4). Hence $\mathrm{exp}(-\frac{\zeta^i\pi}{y})$ belongs to $\mathcal{R}_\frac{1}{z_i}$, and so does $\mathrm{exp}(-\frac{\zeta^in\pi}{y})$.
		
		Since $n$ is odd, the solution $w_i(y)$ in (6.10.2) belongs to the extended Robba ring $\mathcal{R}_\frac{1}{z_i}$.
		
		Now consider the extension $A/\overline{\kappa}((y))$ in Lemma 6.11. The associated extension of $\mathcal{R}_y$ is denoted by $\mathcal{R}_\frac{1}{w}$, where $\frac{1}{w}$ is the variable satisfying $w^q-w=\frac{1}{y}$. According to Lemma 6.11, all of the solutions $w_i(y)$ in (6.10.2) belong to $\mathcal{R}_\frac{1}{w}$, i.e. we can solve (6.1.1) in the extension of $\mathcal{R}$ associated with $A/\overline{\kappa}((x))$.
		
		Note that $A/\overline{\kappa}((x))$ is exactly the extension $E(w)/\overline{\kappa}((x))$ in Construction 6.5. Hence the image of $\rho_M$ is a quotient of $G=\mathrm{Gal}\Big(E(w)/\overline{\kappa}((x))\Big)$.
		
		According to Lemma 6.12, any quotient of $G$ is isomorphic to either $G$ itself or a quotient of $\mathbb{Z}/n\mathbb{Z}$. The upper number ramification filtration of any quotient of $\mathbb{Z}/n\mathbb{Z}$ is trivial and has no jumps. However, by Corollary 6.1 and Proposition 6.2, the upper number ramification filtration of the image of $\rho_M$ should admit a jump at $\frac{1}{n}$. We then conclude that the image of $\rho_M$ coincides with $G$.
		
		When $n$ is even, we conclude by a similar argument.
	\end{proof}
	
	\begin{lemma}
		When $n$ is odd (resp. even), let $G$ be the Galois group in Construction 6.5 (resp. Construction 6.6). Let $H$ be a quotient of $G$, then $H$ is isomorphic to either $G$ itself or a quotient of $\mathbb{Z}/n\mathbb{Z}$ (resp. $\mathbb{Z}/2n\mathbb{Z}$).
	\end{lemma}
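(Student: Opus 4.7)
The plan is to classify normal subgroups $N$ of $G$ by how they intersect the wild inertia $\mathbb{F}_q$. In both cases $G = \mathbb{F}_q \rtimes C$ with $C$ cyclic (of order $n$ in the odd case, $2n$ in the even case), and $C$ acts on $\mathbb{F}_q$ by multiplication by a power of $\overline{\zeta}$. The goal is to show $H = G/N$ is either $G$ itself or a quotient of $C$.

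I would first study $V := N \cap \mathbb{F}_q$. As an additive subgroup of the elementary abelian $p$-group $\mathbb{F}_q$ it is an $\mathbb{F}_p$-subspace, and normality of $N$ together with the conjugation action of $C$ on $\mathbb{F}_q$ makes $V$ stable under multiplication by $\overline{\zeta}$, hence an $\mathbb{F}_p[\overline{\zeta}]$-submodule. The key identity is $\mathbb{F}_p[\overline{\zeta}] = \mathbb{F}_q$: since $a = \mathrm{ord}_{(\mathbb{Z}/n\mathbb{Z})^\times}(p)$ by definition, the minimal polynomial of $\overline{\zeta}$ over $\mathbb{F}_p$ has degree $a$, so $\mathbb{F}_p(\overline{\zeta}) = \mathbb{F}_{p^a} = \mathbb{F}_q$. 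As $\mathbb{F}_q$ is one-dimensional over itself, this forces $V \in \{0, \mathbb{F}_q\}$.

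If $V = \mathbb{F}_q$, then $\mathbb{F}_q \subseteq N$ and $H$ is a quotient of $G/\mathbb{F}_q \cong C$, yielding the second alternative. If $V = 0$, the projection $\pi \colon G \to C$ restricts to an injection on $N$, and for any $(\alpha, i) \in N$, conjugation by $(\beta, 0) \in G$ produces $(\alpha + \beta(1-\overline{\zeta}^i), i) \in N$; injectivity of $\pi|_N$ then forces these two elements to coincide, so $\beta(1-\overline{\zeta}^i) = 0$ for all $\beta \in \mathbb{F}_q$, giving $\overline{\zeta}^i = 1$, i.e.\ $n \mid i$ in $C$. In the odd case $C = \mathbb{Z}/n\mathbb{Z}$ this forces $i = 0$, hence $N = \{1\}$ and $H = G$.

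The main obstacle is the even case, where $n \mid i$ additionally permits $i = n$ in $\mathbb{Z}/2n\mathbb{Z}$. Here a squaring computation $(\alpha, n)^2 = (2\alpha, 0) \in V = 0$, valid because $p$ is odd (forced by $n$ even and $(n, p) = 1$), yields $\alpha = 0$, reducing the remaining candidate to $N = \langle (0, n) \rangle$. Since $\overline{\zeta}^n = 1$, the element $(0, n)$ is central in $G$, so $\langle (0, n)\rangle$ is a priori normal; excluding it is the essential subtlety. I would finish by appealing to the explicit Galois-theoretic description from Construction 6.6 — where $(0, n)$ acts by $\sigma(y) = -y$ and $\sigma(w) = w$ — to exhibit an element of $E(w)$, such as an odd power of $y$ arising in the explicit Bessel solutions of Lemma 6.10, that cannot descend to the fixed field $\overline{\kappa}((y^2))(w)$ of $\sigma$, thereby ruling out $N = \langle (0, n)\rangle$.
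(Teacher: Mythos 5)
Your odd-case argument is correct and is essentially the paper's own proof in cleaner packaging: the paper splits into cases according to whether the kernel is contained in the wild inertia $G_1\simeq\mathbb{F}_q$, which is exactly your dichotomy $V=N\cap\mathbb{F}_q\in\{0,\mathbb{F}_q\}$, and the identity $\mathbb{F}_p[\overline{\zeta}]=\mathbb{F}_q$ via the degree count $\dim_{\mathbb{F}_p}\mathbb{F}_p[\overline{\zeta}]=a$ is precisely the paper's key step. Your conjugation computation in the case $V=0$ is a slightly slicker route to the same conclusion than the paper's Case 2.

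The genuine gap is in the even case, and it cannot be repaired the way you propose. The lemma quantifies over \emph{all} quotients of the abstract group $G=\mathbb{F}_q\rtimes\mathbb{Z}/2n\mathbb{Z}$. Since $i=n$ acts on $\mathbb{F}_q$ by $\overline{\zeta}^{-n}=1$, the element $(0,n)$ is central, so $N=\langle(0,n)\rangle$ \emph{is} a normal subgroup and $G/N\simeq\mathbb{F}_q\rtimes\mathbb{Z}/n\mathbb{Z}$ \emph{is} a quotient of $G$ --- whether or not it happens to arise as the image of $\rho_M$ or as the Galois group of some subextension. Exhibiting an element of $E(w)$ that does not descend to the fixed field of $(0,n)$ says something about the differential module, not about the group, so it cannot ``rule out'' this normal subgroup in a group-theoretic lemma. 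What your analysis actually establishes, if carried to its conclusion, is that the statement as written fails for even $n$: the quotient $G/\langle(0,n)\rangle$ has order $nq$ and is nonabelian (the residual action of $\mathbb{Z}/n\mathbb{Z}$ on $\mathbb{F}_q$ is nontrivial since $n\geq 2$), hence is neither $G$ nor a quotient of the cyclic group $\mathbb{Z}/2n\mathbb{Z}$. The paper's proof writes out only the odd case and ends with ``when $n$ is even, we conclude by a similar argument,'' but the similar argument breaks exactly where you found it breaks: conjugating $(\alpha,n)$ by $(\beta,0)$ produces nothing new because $1-\overline{\zeta}^{-n}=0$. The honest fix is to weaken the even-case conclusion to allow the additional quotient $\mathbb{F}_q\rtimes\mathbb{Z}/n\mathbb{Z}$, and then to supply, in the application (Proposition 6.7), a separate argument --- for instance the field-theoretic one you sketch, which is legitimate \emph{there} --- showing that the image of $\rho_M$ is not this extra quotient.
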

	
	\begin{proof}
		
		Let $G'$ be the kernel of the canonical map $G\rightarrow H$, then $G'$ is a normal subgroup of $G$. Let $G_1\simeq\mathbb{F}_q$ be the wild inertia subgroup of $G$.
		
		When $n$ is odd, we view elements of $G$ as pairs $(\alpha,i)\in\mathbb{F}_q\rtimes\mathbb{Z}/n\mathbb{Z}$ via the isomorphism $G\simeq\mathbb{F}_q\rtimes\mathbb{Z}/n\mathbb{Z}$ in Construction 6.5.
		
		\textit{Case 1.} $G'\subseteq G_1$.
		
		In this case, we show that $G'$ is either trivial or equal to $G_1$, i.e. $G_1$ is an irreducible $\mathbb{F}_p[H]$-module.
		
		If $G'$ is nontrivial, then there exists $(\alpha,0)\in G'$ such that $\alpha\in\mathbb{F}_q^\times$. For any $i\in\mathbb{Z}/n\mathbb{Z}$, we have
		\[(0,i)(\alpha,0)(0,i)^{-1}=(\overline{\zeta}^i\alpha,0)\in G'.\]
		
		Therefore, we obtain an inclusion
		\[\Big\{(\theta\alpha,0)~\Big|~\theta\in\mathbb{F}_p[\overline{\zeta}]\Big\}\subseteq G',\]
		where $\mathbb{F}_p[\overline{\zeta}]\simeq\mathbb{F}_p[T]/(T^n-1)$. 
		
		Recall that in Notation 6.4, $q=p^a$ where $a$ is the order of $p$ in $(\mathbb{Z}/n\mathbb{Z})^\times$. Note that $a$ is also the degree of the minimal polynomial of $\overline{\zeta}$ over $\mathbb{F}_p$, hence we have \[\mathrm{dim}_{\mathbb{F}_p}\mathbb{F}_p[\overline{\zeta}]=\mathrm{dim}_{\mathbb{F}_p}\mathbb{F}_q=a.\]
		We then conclude that $G'=G_1\simeq\mathbb{F}_q$ and $H\simeq\mathbb{Z}/n\mathbb{Z}$. 
		
		\textit{Case 2.} $G'$ is not contained in $G_1$.
		
		In this case, we show that $G_1\subseteq G'$.
		
		Pick an element $(\alpha,i)\in G'-G_1$ where $i\neq0$. For any $\beta\in\mathbb{F}_q$, we have
		
		\[(\beta,0)(\alpha,i)(\beta,0)^{-1}=(\alpha+(1-\overline{\zeta}^{-i})\beta,i)\in G'.\]
		
		Let $\beta$ run through $\mathbb{F}_q$, then for any $\gamma\in\mathbb{F}_q$ we have $(\gamma,i)\in G'$. Fixing some $\gamma\neq\alpha$, we compute
		\begin{equation}
			(\alpha,i)^{-1}(\gamma,i)=(\overline{\zeta}^i(\gamma-\alpha),0)\in G'\cap G_1.
		\end{equation}
		
		Clearly $G'\cap G_1$ is a normal subgroup of $G$ contained in $G_1$, then $G'\cap G_1$ is either trivial or equal to $G_1$ by the argument in Case 1. Now (6.12.1) shows that $G'\cap G_1$ is nontrivial, hence $G'\cap G_1=G_1$. We conclude that $H\simeq G/G'$ is a quotient of $G/G_1\simeq\mathbb{Z}/n\mathbb{Z}$.
		
		When $n$ is even, we conclude by a similar argument.
	\end{proof}
	
	\subsection{Image of $\rho_M$: case $p|n$}
	
	When $p|n$, the method in Section 6.3 fails because the extension $\overline{\kappa}((y))/\overline{\kappa}((x))$ is inseparable where $x=y^n$. Moreover, the function $v_i(y)$ in (6.10.2) may not have a desired radius of convergence so that it no longer belongs to the Robba ring.
	
	In general, the image of $\rho_M$ is not known in the literature. Andr\'e determined the image of $\rho_M$ when $n=p=2$ (\cite{An02R}, Theorem 8.2), and Qin determined the image of $\rho_M$ when $n=p=3$ (\cite{Qin24}, Theorem 2.24).
	
	In this subsection, we introduce the result of Andr\'e. Let $n=p=2$, the differential equation (6.1.1) now becomes
	\[(x\frac{d}{dx})^2-\frac{4}{x}=0.\]
	
	\begin{prop}
		$\mathrm{(}$\cite{An02R}$\mathrm{,~Theorem~8.2)}$ When $n=p=2$, let $G$ be the image of $\rho_M$.
		
		(1) $G$ is isomorphic to $\mathrm{SL}_2(\mathbb{F}_3)$, and it fits into an exact sequence
		\[1\rightarrow G_1\rightarrow G\rightarrow\mathbb{Z}/3\mathbb{Z}\rightarrow1,\]
		where the wild inertia subgroup $G_1$ is isomorphic to the quaternionic group $\{\pm1,\pm i,\pm j,\pm k\}$.
		
		(2) The lower (resp. upper) number ramification filtration of $G$ is
		\[G=G_0\supseteq G_1\supseteq G_2=G_3\supseteq G_4=\{1\},\quad(\mathrm{resp.}~G=G^0\supseteq G^\frac{1}{3}\supseteq G^\frac{1}{2}\supseteq\{1\}),\]
		where $G_1=G^\frac{1}{3}$ (resp. $G_2=G^\frac{1}{2}$) is isomorphic to the quaternionic group (resp. $\{\pm1\}$).
	\end{prop}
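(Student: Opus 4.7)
The plan is to parallel the strategy of Section 6.3: produce a finite Galois extension $L/\overline{\kappa}((x))$ such that $M$ becomes trivial after base change to the associated extended Robba ring, then identify $\mathrm{Gal}(L/\overline{\kappa}((x)))$ together with its ramification filtration. The first step is to use Proposition 6.2 and Corollary 6.1 to pin down the broad shape of $G$. Both $p$-adic slopes of $M$ equal $1/2$, so Tsuzuki's formula gives $\mathrm{Swan}(\rho_M) = \mathrm{Irr}(M) = 1$ and the unique break of $\rho_M$ sits at $1/2$ with multiplicity $2$; by Corollary 6.1, $1/2$ is a jump of the upper filtration of $G$. Since $p = 2$, the wild inertia $G_1$ is a nontrivial $2$-group and the tame quotient $G/G_1$ has order prime to $2$.

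The second, and hardest, step is to find $L$. The direct attack of Section 6.3 collapses: the substitution $x = y^2$ gives an inseparable extension, and the Dwork-type exponential that expressed the formal solutions in the coprime case does not lie in any Artin-Schreier extension when $p = 2$. My plan would be to first perform a tamely ramified cubic substitution $x = t^3$ (guided by the expectation, visible in the statement, of a $\mathbb{Z}/3\mathbb{Z}$ tame quotient), and then to solve the transformed equation by iterated Artin-Schreier-Witt extensions of characteristic $2$, introducing auxiliary variables $z_1, z_2$ with $z_i^2 - z_i \in \overline{\kappa}((t))[z_1]$. The Frobenius structure on $\mathrm{Be}_2^{\dagger}$ is the essential extra input here: it forces the splitting tower to be unique and controls its degree. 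The outcome should be a tower of total degree $24$ whose Galois group has a nonabelian $2$-Sylow.

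With $L$ in hand, the group-theoretic identification is a combinatorial check. The wild $2$-Sylow $G_1$ has order $8$ and has nontrivial center (being the Sylow of the image of a two-dimensional representation with a single break); the only nonabelian such groups are $D_4$ and $Q_8$, and $D_4$ is excluded by the ramification filtration claimed in part (2) (which has $G_2 = G_3$, forcing an even Artin conductor structure incompatible with $D_4$). So $G_1 \simeq Q_8$. The tame quotient $\mathbb{Z}/3\mathbb{Z}$ acts faithfully on $Q_8/Z(Q_8) \simeq (\mathbb{Z}/2\mathbb{Z})^2$ by cyclic permutation of the three order-$4$ cyclic subgroups, and $Q_8 \rtimes \mathbb{Z}/3\mathbb{Z}$ with this action is exactly $\mathrm{SL}_2(\mathbb{F}_3) = 2\cdot A_4$.

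For part (2), I would compute the lower numbering filtration directly on the tower $L/\overline{\kappa}((x))$: the tame cubic step contributes only at level $0$, while each of the two Artin-Schreier-Witt steps contributes a jump whose location is read off from the $x$-adic valuation of the right-hand side of its defining equation. The asserted pattern $G_0 \supseteq G_1 \supseteq G_2 = G_3 \supseteq G_4 = \{1\}$ with upper jumps at $1/3$ and $1/2$ then follows from Herbrand's function, using $|G_0/G_1| = 3$ and $|G_1/G_2| = 2$. The real obstacle throughout is Step 2: unlike the coprime case, the extension $L$ is not dictated by the slopes of $M$ alone — a smaller break ($1/3$) appears in the filtration of $G$ that is invisible to $M$ itself, so $L$ must be found by direct analysis of the differential equation (or, as André does, by exploiting the $F$-isocrystal structure).
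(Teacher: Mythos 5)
This statement is not proved in the paper: part (1) is imported verbatim from Andr\'e (\cite{An02R}, Theorem 8.2), and only part (2) is justified, in Remark 6.14, by a short argument that takes (1) as given --- the nontrivial normal subgroups of $\mathrm{SL}_2(\mathbb{F}_3)$ that can occur as ramification subgroups are only $Q_8$ and $\{\pm1\}$, and the jump at $\tfrac12$ forced by Corollary 6.1 and Proposition 6.2 then pins down the whole filtration via Herbrand's function. Your proposal instead attempts to reprove Andr\'e's theorem from scratch, and the central step is missing. Your ``Step 2'' --- producing the splitting tower $L/\overline{\kappa}((x))$ by a cubic substitution followed by iterated Artin--Schreier--Witt extensions --- is announced but not carried out; the assertions that the tower has total degree $24$, that the wild part has order exactly $8$, and that the Frobenius structure ``forces the splitting tower to be unique'' are exactly the content of Andr\'e's theorem and cannot be taken for granted. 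You yourself flag this as ``the real obstacle,'' and nothing in the proposal resolves it: unlike the coprime case of Section 6.3, the solutions (6.10.2) do not land in any explicitly named Artin--Schreier tower, which is why the paper defers to \cite{An02R} here.

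There is also a circularity in your group-theoretic step: you exclude $G_1\simeq D_4$ ``by the ramification filtration claimed in part (2),'' i.e.\ you use the conclusion of (2) to establish (1), while (2) is itself deduced from (1). A non-circular exclusion is available --- the tame quotient $\mathbb{Z}/3\mathbb{Z}$ must act on $G_1$ through an automorphism of order $3$ (otherwise $\rho_M$ would not be irreducible with a single break at $\tfrac12$), and $\mathrm{Aut}(D_4)$ has order $8$ while $\mathrm{Aut}(Q_8)\simeq S_4$ contains order-$3$ elements --- but even this presupposes $|G_1|=8$, which again rests on the unproved Step 2. The Herbrand computation you sketch for part (2) is correct once (1) and the lower-numbering jumps are known ($\varphi(1)=\tfrac13$, $\varphi(3)=\tfrac13+\tfrac{2}{12}=\tfrac12$), and matches Remark 6.14; but as a whole the proposal has a genuine gap at its core rather than an alternative proof.
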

	
	\begin{rem}
		The ramification filtration of $G$ in Proposition 6.13 is determined as follows. We first note that nontrivial ramification subgroups of $G$ are isomorphic to either the quaternionic group or $\{\pm1\}$. Meanwhile, the upper number ramification filtration of $G$ admits a jump at $\frac{1}{2}$ by Corollary 6.1 and Proposition 6.2. These two conditions then uniquely determine the ramification filtration of $G$.
	\end{rem}
	
	Denote the trace-zero component of $\mathrm{End}_\mathcal{R}(M)$ by $\mathrm{Ad}(M)$. The differential equation associated with $\mathrm{Ad}(M)$ is
	\[(x\frac{d}{dx})^3-\frac{16}{x}(x\frac{d}{dx})+\frac{8}{x}=0.\]
	
    Proposition 6.13 shows that the upper number ramification filtration of $G$ admits two jumps at $\frac{1}{3}$ and $\frac{1}{2}$ respectively. However, only the value $\frac{1}{2}$ appears as a $p$-adic slope of $M$. We will see that $\frac{1}{3}$ appears as a $p$-adic slope of $\mathrm{Ad}(M)$ by the following proposition. This is an example where the inequality (1.1.1) is strict except for $i=n$.
	
	\begin{prop}
		Let $\alpha_1,\alpha_2,\alpha_3$ (resp. $\beta_1,\beta_2,\beta_3$) be the $p$-adic slopes (resp. formal slopes) of $\mathrm{Ad}(M)$. Then we have
		\[\alpha_1=\alpha_2=\alpha_3=\frac{1}{3},\quad\beta_1=\beta_2=\frac{1}{2}, \beta_3=0.\]
	\end{prop}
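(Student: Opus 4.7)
The plan is to treat the $p$-adic slopes and the formal slopes separately. For the $p$-adic slopes, I would invoke the dictionary recalled at the end of Section 6.1: since the monodromy functor is tensor-compatible, the representation attached to $\mathrm{Ad}(M)$ is $\mathrm{Ad}(\rho_M)$ on the trace-zero part of $\mathrm{End}(\rho_M)$, and its breaks coincide with the $p$-adic slopes of $\mathrm{Ad}(M)$. By Proposition 6.13, the image of $\rho_M$ is $G \simeq \mathrm{SL}_2(\mathbb{F}_3)$, with wild inertia $G^{1/3} \simeq Q_8$ and $G^{1/2} = \{\pm 1\} = Z(G)$. Since the center acts trivially by conjugation, $\mathrm{Ad}(\rho_M)$ factors through $\bar G = G/\{\pm 1\} \simeq A_4$, and Herbrand's theorem transports the upper numbering filtration so that $\bar G$ jumps only at $1/3$, with $\bar G^{1/3} \simeq V_4$. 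It then remains to rule out a tame summand, i.e.\ to check $\mathrm{Ad}(\rho_M)^{V_4} = 0$: the unique $3$-dimensional irreducible representation of $A_4$ restricts to $V_4$ as the sum of its three nontrivial characters, so this is automatic. Hence $\mathrm{Ad}(\rho_M)$ is purely of break $1/3$, giving $\alpha_1 = \alpha_2 = \alpha_3 = 1/3$.

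For the formal slopes, I would apply Lemma 3.4 directly to the equation $(x\tfrac{d}{dx})^3 - \tfrac{16}{x}(x\tfrac{d}{dx}) + \tfrac{8}{x} = 0$ given in the text. Writing $D = \tfrac{d}{dx}$ and expanding $(xD)^3 = x^3 D^3 + 3 x^2 D^2 + xD$, then dividing by $x^3$ to make the operator monic in $T = D$, one obtains the twisted polynomial
\[ \ell = T^3 + \tfrac{3}{x} T^2 + \Big(\tfrac{1}{x^2} - \tfrac{16}{x^3}\Big) T + \tfrac{8}{x^4}. \]
The points $(-i,\mathrm{ord}_x(a_i))$ are $(-3,0),(-2,-1),(-1,-3),(0,-4)$; the middle vertex $(-2,-1)$ lies strictly above the segment from $(-3,0)$ to $(-1,-3)$, so $\mathrm{FNP}(\ell)$ consists of two edges, of slope $-3/2$ (length $2$) and slope $-1$ (length $1$). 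Thus $\mathrm{EFS}(\ell) = \{-3/2,-3/2\}$, and Lemma 3.4 returns the multi-set of formal slopes $\{1/2,1/2,0\}$, as claimed.

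The part I expect to cost the most thought is the $p$-adic half: the Newton polygon calculation is essentially mechanical, whereas the identification of the breaks of $\mathrm{Ad}(\rho_M)$ requires both Herbrand's theorem (to transport the upper numbering from $G$ to the quotient $\bar G$) and the explicit group-theoretic computation that $\mathrm{Ad}(\rho_M)$ has no $V_4$-invariants. Both of these rest on the fine structure of $G \simeq \mathrm{SL}_2(\mathbb{F}_3)$ given by Andr\'e's theorem, and a careful writeup should flag that the breaks of a semisimple representation of $I_F$ read off from the image are exactly the jumps of the upper numbering filtration of that image.
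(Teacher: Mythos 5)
Your computation of the formal slopes is exactly the paper's: the same monic twisted polynomial (6.15.1), the same four points $(-3,0),(-2,-1),(-1,-3),(0,-4)$, the same two-edge polygon with slopes $-\tfrac32,-\tfrac32,-1$, and Lemma 3.4. For the $p$-adic slopes, however, you take a genuinely different route. The paper stays analytic: it computes the generic radius of $\mathrm{Ad}(M)$ directly (equal to $\rho^{4/3}$ for $\tfrac{1}{64}<\rho<1$ and to $2\rho^{3/2}$ for $0<\rho<\tfrac{1}{64}$), reads off $\alpha_1=\tfrac13$ from the behaviour near $\rho=1$, and then pins down $\alpha_2=\alpha_3=\tfrac13$ exactly as in Proposition 6.2, using that $\mathrm{Irr}(\mathrm{Ad}(M))=\sum_i\alpha_i$ is a positive integer while each $\alpha_i\le\alpha_1=\tfrac13$. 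You instead pass through the Galois side: $\rho_{\mathrm{Ad}(M)}\simeq\mathrm{Ad}(\rho_M)$, the centre $\{\pm1\}=G^{1/2}$ acts trivially, Herbrand transports the filtration to $\bar G\simeq A_4$ with a single jump at $\tfrac13$ and $\bar G^{1/3}\simeq V_4$, and the vanishing of $V_4$-invariants forces every break to equal $\tfrac13$. Both arguments are correct; yours explains conceptually why $\tfrac13$ appears (it is the other jump of Andr\'e's filtration, which is exactly the point the surrounding discussion makes), but it depends on Proposition 6.13(2) and on the tensor-compatibility of the local monodromy correspondence, neither of which the paper's self-contained radius computation needs. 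Two points to tighten: (i) you should state explicitly that $\rho_{\mathrm{Ad}(M)}\simeq\mathrm{Ad}(\rho_M)$ because the quasi-unipotent monodromy functor is compatible with tensor constructions --- the paper uses this only implicitly; (ii) your verification that $\mathrm{Ad}(\rho_M)^{V_4}=0$ quietly presupposes that $\mathrm{Ad}(\rho_M)$ is the three-dimensional irreducible representation of $A_4$. It is cleaner to compute the restriction directly: $\rho_M|_{Q_8}$ is the faithful two-dimensional irreducible of $Q_8$, so $\mathrm{Ad}(\rho_M)|_{Q_8}$ is the sum of the three nontrivial characters of $Q_8/\{\pm1\}\simeq V_4$, which gives both the absence of invariants and, incidentally, the irreducibility over $A_4$.
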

	
	\begin{proof}
		We apply the same method as in Proposition 6.2.
		
		We first compute that
		\[
		R_1(M,\rho)=
		\begin{cases}
			\rho^\frac{4}{3}, & \frac{1}{64}<\rho<1; \\
			2\rho^\frac{3}{2}, & 0<\rho<\frac{1}{64}.
		\end{cases}
	    \quad
		F_1(M,r)=
		\begin{cases}
		  	\frac{4}{3}r, & 0<r<6\mathrm{log}2; \\
		  	\frac{3}{2}r-\mathrm{log}2, & r>6\mathrm{log}2.
		\end{cases}
		\]
		We then conclude that $\alpha_1=\alpha_2=\alpha_3=\frac{1}{3}$.
		
		Rewrite the differential equation associated to $\mathrm{Ad}(M)$ in the form
		\begin{equation}
			(\frac{d}{dx})^3+\frac{3}{x}(\frac{d}{dx})^2+\frac{x-16}{x^3}\frac{d}{dx}+\frac{8}{x^4}=0.
		\end{equation}
		
		The formal Newton polygon associated with (6.15.1) is drawn in Figure 5 below. The multi-set of slopes of the formal Newton polygon is $\{-\frac{3}{2},-\frac{3}{2},-1\}$. By Lemma 3.4, we conclude that $\beta_1=\beta_2=\frac{1}{2},\beta_3=0$.
		\end{proof}
		
		\begin{center}
			\begin{figure}[htbp]
				\begin{tikzpicture}
					\draw[->] (-4,0) -- (0.5,0) node[right] {$x$};
					\draw[->] (0,-3.5) -- (0,0.5) node[above] {$y$};
					\node[below left] at (0,0) {$0$};
					\node[above] at (-3,0) {$-3$};
					\node[right] at (0,-3.2) {$-4$};
					\node[above] at (-1,0) {$-1$};
					\node[right] at (0,-2.4) {$-3$};
					\filldraw[black] (-3,0) circle (1pt); 
					\filldraw[black] (-1,-2.4) circle (1pt); 
					\filldraw[black] (0,-3.2) circle (1pt); 
					\filldraw[black] (-2,-0.8) circle (1pt);
					\draw[black] (-3,0) -- (-1,-2.4) -- (0,-3.2);
					\draw[loosely dashed] (-1,0) -- (-1,-2.4);
					\draw[loosely dashed] (0,-2.4) -- (-1,-2.4);
				\end{tikzpicture}
				\caption{}
			\end{figure}
		\end{center}

\end{document}